\documentclass[a4paper,10pt]{amsart}
\usepackage{amsmath,amsfonts,amssymb,amsthm, amsfonts, amscd, url}
\usepackage[utf8]{inputenc}
\usepackage[english]{babel}
\usepackage{url}

\usepackage[cmtip,arrow]{xy}

\usepackage{pb-diagram,pb-xy}

\swapnumbers 
\theoremstyle{plain}
\newtheorem{thm}{Theorem}[section]
\newtheorem{prop}[thm]{Proposition}
\newtheorem{lemma}[thm]{Lemma}
\newtheorem{cor}[thm]{Corollary}
\theoremstyle{remark}
\theoremstyle{definition}
\newtheorem{rem}[thm]{Remark}
\newtheorem{rems}[thm]{Remarks}
\newtheorem{remdef}[thm]{Remark-Definition}
\newtheorem{remsdefs}[thm]{Remarks-Definitions}

\newtheorem{defi}[thm]{Definition}

\newtheorem{notas}[thm]{Notations}
\newtheorem{nota}[thm]{Notation}
\newtheorem{examples}[thm]{Examples}

\usepackage[english]{babel}

\title[Generalized principal logarithms]{SVD-closed subgroups of the unitary group:\\ generalized principal logarithms\\ and minimizing geodesics}

\author{Donato Pertici \and Alberto Dolcetti}

\begin{document}

\parindent 0pt
\selectlanguage{english}

\maketitle

\vspace*{-0.2in}

\begin{center}
{\scriptsize Dipartimento di Matematica  e Informatica, Viale Morgagni 67/a, 50134 Firenze, ITALIA

\vspace*{0.07in}

donato.pertici@unifi.it,  \   http://orcid.org/0000-0003-4667-9568

\vspace*{-0.03in}

alberto.dolcetti@unifi.it, \  http://orcid.org/0000-0001-9791-8122
}

\end{center}


\vspace*{-0.1in}

\begin{abstract}
We study the set of generalized principal $\mathfrak{g}$-logarithms of any matrix belonging to a connected SVD-closed subgroup $G$ of $U_n$, with Lie algebra $\mathfrak{g}$. This set  is a non-empty disjoint union of a finite number of subsets diffeomorphic to homogeneous spaces, and it is related to a suitable set of minimizing geodesics. Many particular cases for the group $G$ are explicitly analysed.
\end{abstract}


{\small \tableofcontents}

\renewcommand{\thefootnote}{\fnsymbol{footnote}}

\renewcommand{\thefootnote}{\arabic{footnote}}
\setcounter{footnote}{0}

\vspace*{-0.3in}

{\small {\scshape{Keywords.}} Generalized principal logarithm; SVD-decomposition, SVD-closed subgroup, Frobenius metric, minimizing geodesics, (symmetric) homogeneous space.
\smallskip

{\small {\scshape{Mathematics~Subject~Classification~(2020):}}  53C30, 15B30, 22E15.

{\small {\scshape{Grants:}}
This research has been partially supported by GNSAGA-INdAM (Italy).

\section*{Introduction}\label{intro}

If $M$ is a matrix belonging to a connected closed subgroup $G$ of $GL_n (\mathbb{C})$, having $\mathfrak{g}$ as Lie algebra, we say that a matrix $L \in \mathfrak{g}$ is a \emph{generalized principal} $\mathfrak{g}$-\emph{logarithm} of $M$, if $\exp(L) = M$ and  $- \pi \le Im(\lambda) \le\pi$, for every eigenvalue $\lambda$ of $L$; the set of all  generalized  principal 
$\mathfrak{g}$-logarithms of $M$ is denoted by $\mathfrak{g}$--$plog(M)$. Our definition  relaxes the usual one of \emph{principal logarithm}, which excludes the matrices $M \in GL_n(\mathbb{C})$ with negative eigenvalues (see, for instance, \cite[Thm.\,1.31]{Hi2008}). 
The usual definition implies both existence and uniqueness of a principal logarithm.
In some relevant cases, matrices with negative eigenvalues and belonging to a closed subgroup $G$ of $GL_n (\mathbb{C})$, have an infinite set of generalized  principal $\mathfrak{g}$--logarithms, on which it is possible to define some natural geometric structures. We have already studied the sets $\mathfrak{so}_n$--$plog(M)$, if $M \in SO_n$, and $\mathfrak\mathfrak{gl}_n (\mathbb{R})$--$plog(M)$, if $M$ is semi-simple (see \cite{DoPe2018a} and \cite{Pe2022}).
Our interest in the set $\mathfrak{g}$--$plog(M)$ is related to a differential-geometric setting, which we briefly describe.
Denote by ${\phi}$  the \emph{Frobenius} (or \emph{Hilbert-Schmidt}) positive definite real scalar product on $\mathfrak{gl}_n (\mathbb{C})$, defined by ${\phi}(A, B) := Re(tr(A B ^*))$. If $G$ is a connected closed subgroup $G$ of the unitary group $U_n$ (with Lie algebra $\mathfrak{g}$), we still denote by $\phi$ the Riemannian metric on $G$, obtained by restriction of the Frobenius scalar product of $\mathfrak{gl}_n (\mathbb{C})$. This metric is bi-invariant on $G$ and the corresponding geodesics are the curves 
$\gamma(t)=P\exp({t X})$, 
where $X \in \mathfrak{g}$ and $P \in G$. The set of  minimizing geodesic segments of $(G,\phi)$ is a classical and relevant subject of investigation.

In this paper we also assume that the group $G$ is \emph{SVD-closed}: a condition satisfied  by many closed subgroup of $U_n$. The reason is that, under this assumption, for every $P_{_0}, P_{_1} \in G$, the set of  minimizing geodesic segments of $(G, \phi)$ with endpoints $P_{_0}$ and $P_{_1}$, can be parametrized by the set of generalized principal $\mathfrak{g}$--logarithms of $P_{_0}^* P_{_1}$ (see Theorem \ref{distanza}). 

Therefore, a geometric structure on $\mathfrak{g}$--$plog(P_{_0}^* P_{_1})$ induces a corresponding structure on the set of minimizing geodesic segments joining $P_{_0}$ and $P_{_1}$.

To fully illustrate the statements of the title and of the previous result, we must explain the meaning of \emph{SVD-closure}.
Any matrix $M \in \mathfrak{gl}_n (\mathbb{C})  \setminus \{0\}$ has a unique decomposition (called \emph{SVD-decomposition} of $M$) of the form $M = \sum\limits_{i=1}^p \sigma_{_i} A_{_i}$, where $\sigma_{_1} > \sigma_{_2} > \cdots > \sigma_{_p} > 0$ are the non-zero singular values of $M$, and $A_{_1}, A_{_2}, \cdots A_{_p}$ are non-zero complex matrices (called \emph{SVD-components} of $M$) such that $A_{_h}^* A_{_j} = A_{_h} A_{_j}^* =0$, for every $h \ne j$, and $A_{_j} A_{_j}^* A_{_j} = A_{_j}$, for every $j$. We say that a real Lie subalgebra $\mathfrak{g}$ of $\mathfrak{gl}_n(\mathbb{C})$ is \emph{SVD-closed} if, for any matrix $M \in \mathfrak{g} \setminus \{0\}$, all SVD-components of $M$ belong to $\mathfrak{g}$.
A closed subgroup of $GL_n (\mathbb{C})$ is \emph{SVD-closed} if its Lie algebra is SVD-closed in $\mathfrak{gl}_n(\mathbb{C})$. 

\smallskip

Sections \ref{preliminari} and \ref{preliminari2} are devoted to recall many general basic notions and preliminary facts on matrices. In Section \ref{SVD-closed} we discuss and determine a wide class of SVD-closed real Lie subalgebras of $\mathfrak{gl}_n (\mathbb{C})$. The key result is that the sets of fixed points of all automorphisms of the real Lie algebra $\mathfrak{gl}_n(\mathbb{C})$, commuting with the map $\eta: A \mapsto A^*$ and preserving the so-called \emph{triple Jordan product}, are SVD-closed real Lie subalgebras of $\mathfrak{gl}_n (\mathbb{C})$ (see Proposition \ref{Fix-SVD-inv}). In Section \ref{SVD-gruppi}, we prove that many classical groups of matrices are SVD-closed, as, for instance, the real general linear group $GL_n (\mathbb{R})$, the unitary group $U_n$, the special orthogonal complex group $SO_n (\mathbb{C})$, the symplectic groups
$Sp_{2n} (\mathbb{C})$, $Sp_{2n} (\mathbb{R})$, the generalized unitary groups $U_{(p, n-p)}$ and all their intersections.
In particular, we analyse the following families of SVD-closed subgroups of $U_n$:

$\langle  V  \rangle_{_{U_n}} := \{X \in U_n : XV=VX \}$, where $V$ is an arbitrary unitary matrix, 

$\preccurlyeq Q \succcurlyeq_{_{U_n}} :=\{X \in U_n : XQX^T=Q \}$ \ and \ 
$\preccurlyeq Q \succcurlyeq_{_{SU_n}}\  := \preccurlyeq Q \succcurlyeq_{_{U_n}} \cap \ SU_n$, \ where $Q$ is an arbitrary real orthogonal matrix. Among them, we find many classical closed subgroups of $U_n$, as, for instance,  \ \ $SO_n$, \ \  $Sp_n$, \ \ $U_{(p,n-p)} \cap U_n$ \  and \  $\big(SO_{(p, n-p)} (\mathbb{C})\big) \cap U_n$.

In Section \ref{Sect-plog} we study the set $\mathfrak{g}$--$plog(M)$ for a matrix $M$, belonging to a connected SVD-closed subgroup $G$ of $U_n$, with Lie algebra $\mathfrak{g}$. In particular we prove that  $\mathfrak{g}$--$plog(M)$ is non-empty (see Proposition \ref{esist-gplog}) and that it is a disjoint union of a finite number of compact submanifolds of $\mathfrak{g}$, each of which is diffeomorphic to a homogeneous space (Theorem \ref{maximal-torus}).
In Section \ref{Sect-Frob-metr} we obtain some results about of the Riemannian manifold $(G, \phi)$, where $G$ is any connected SVD-closed subgroup of $U_n$, and, among them, the already mentioned Theorem \ref{distanza}. In addition, we compute the diameter of all connected SVD-closed subgroups of $U_n$ that we considered in Section \ref{SVD-gruppi} (see Proposition \ref{diameter}).

The main result of Section \ref{Sect-U-n} is Theorem \ref{main-thm-par5}, in which we prove that, for every $V \in U_n$ and $M\in \langle V \rangle_{_{U_n}}$, the set
$\langle V \rangle_{_{\mathfrak{u}_n}}\!\!$--$plog(M)$ has a finite number of components, each of which is a simply connected compact submanifold of $\mathfrak{u}_n$, diffeomorphic to the product of suitable complex Grassmannians.
Finally, the main result of Section \ref{Sect-Q-in-O-n} is Theorem \ref{teor-compl}, which states that, for every  $Q \in O_n$ and $M \in \ \preccurlyeq  Q  \succcurlyeq_{_{SU_n}}$, the set $\preccurlyeq Q \succcurlyeq_{_{\mathfrak{s\!u}_n}}\!\!$--$plog(M)$ has a finite number of components, each of which is a simply connected compact submanifold of $\mathfrak{su}_n$, diffeomorphic to the product of suitable complex Grassmannians with the symmetric homogeneous spaces $\dfrac{SO_{2m}}{U_{m}}$ and $\dfrac{Sp_{\mu}}{U_{\mu}}$.

\section{Basic notations and some preliminary facts.}\label{preliminari}

\begin{notas}\label{notazioni} \ \\
a) In this paper we will use many standard notations from the matrix theory and from the theory of Lie groups and algebras.

Among these, if $\mathbb{K}$ is either the field of real numbers $\mathbb{R}$, or the field of complex numbers $\mathbb{C}$, or the associative division algebra of quaternions $\mathbb{H}$, then $\mathfrak{gl}_n (\mathbb{K})$ denotes the real Lie algebra of square matrices of order $n$ and $GL_n (\mathbb{K})$ the Lie group of invertible matrices of order $n$, both with coefficients in $\mathbb{K}$. In any case, the identity matrix and the null matrix of order $n$ are denoted by $I_{_n}$ and by ${\bf 0}_{_n}$, respectively, and we define also $\mathbb{K}^0= \lbrace0\rbrace\ .$ As usual, ${\bf i}$ is the unit imaginary number of $\mathbb{C}$ and ${\bf j}$, ${\bf k}$ are the further standard imaginary unities of $\mathbb{H}$, so that ${\bf i}^2 = {\bf j}^2 = {\bf k}^2 = -1$, ${\bf i}{\bf j} = - {\bf j}{\bf i} = {\bf k}$, ${\bf j}{\bf k} = - {\bf k}{\bf j} = {\bf i}$, ${\bf k}{\bf i} = - {\bf i}{\bf k} = {\bf j}$.  Note that any $q \in \mathbb{H}$ can be written in a unique way as $q= z + w {\bf j}$ with $z, w \in \mathbb{C}$, so that the complex field $\mathbb{C}$ can be identified with the set of quaternions of the form $z + 0 \! \cdot \! {\bf j}$, with $z \in \mathbb{C}$. We denote by $e^z := \sum\limits_{i=0}^{+ \infty} \dfrac{z^i}{i!}$ the exponential of $z \in \mathbb{C}$ and, if $z \ne 0$, by $\log(z)$, the unique complex logarithm of $z$, whose imaginary part lies in the interval $(- \pi , \pi]$.

For every $A \in \mathfrak{gl}_n(\mathbb{H})$, $A^T$, $\overline{A}$, $A^{*} := \overline{A}^T$ and $A^{-1}$ (provided that $A$ is invertible) are respectively transpose, conjugate, adjoint and inverse of the matrix $A$ and $tr(A)$ is its trace. If $A \in \mathfrak{gl}_n (\mathbb{C})$, $\det(A)$ denotes its determinant, while $\exp(A):= \sum\limits_{i=0}^{+ \infty} \dfrac{A^i}{i!} \in GL_n (\mathbb{C})$ denotes the exponential of the matrix $A$.

If $M_{_1}, \cdots , M_{_h}$ are square matrices of orders $r_{_1}, \cdots , r_{_h}$, respectively, then $M_{_1} \oplus \cdots \oplus M_{_h}$ denotes the related block-diagonal square matrix of order $r_{_1} + \cdots + r_{_h}$. Moreover, if $B$ is a $p \times p$ matrix, then $B^{\oplus h}$ denotes the $ph \times ph$ block-diagonal matrix $\underbrace{B \oplus \cdots \oplus B}_{h \mbox{ {\footnotesize times}}}$. 

\smallskip
If $\mathcal{S}_1, \dots , \mathcal{S}_m$ are sets of square matrices, then  $\mathcal{S}_1 \oplus \dots \oplus \mathcal{S}_m$ denotes the set of all matrices $B_1 \oplus \cdots \oplus B_m$ with $B_j \in \mathcal{S}_j$\ , for every $j$. \ If the sets $\mathcal{S}_1, \dots , \mathcal{S}_m$ are mutually disjoint, we write $\bigsqcup\limits_{i=1}^h S_{_i}$ \ to denote their (disjoint) union.

To give a full generality to the results of this paper (and to their proofs), it is necessary to establish agreements on the notations that we will use: if \ $h$ is a non-negative integer parameter, whenever, in any formula, we write any term as $\sum\limits_{i=1}^h  \ (\cdots), \ \ \bigoplus\limits_{i=1}^h \ (\cdots)$ \ or \ $\prod\limits_{i=1}^h \ (\cdots)$, \ we mean that, if $h=0$, this sum, this direct sum or this product must not appear in the related formula. 
Moreover, if $G_n$ \ (for $n \geq 1$) denotes any classical Lie groups of matrices of order $n$, having Lie algebra $\mathfrak{g}_n$, and if $H_n$ is a closed subgroup of $G_n$, we also assign a meaning to the expressions $G_0, \ \mathfrak{g}_0, \ \dfrac{G_0}{H_0}$, defining them all equal to a single point \ $\mathcal{Q}$ \ which, conventionally, satisfies the following conditions:

$\lambda \mathcal{Q} = \mathcal{Q}$, \ for every $\lambda \in \mathbb{C}$; \ \ \ \ \ \ \ $\mathcal{Q} \oplus B = B \oplus \mathcal{Q} = B$, \ for any square matrix $B$; \ \ \ \ \ \ \ $\mathcal{Q} \oplus \mathcal{S} = \mathcal{S} \oplus \mathcal{Q} = \mathcal{S}$, \ for any set of square matrices $\mathcal{S}$.

It is also useful to define the zero-order identity matrix $I_{_0}$ and 
$M^{\oplus 0}$ (for every square matrix $M$) both equal to this point $\mathcal{Q}$ and, to simplify the notations and some statements, the complex numbers, which are not eigenvalues of a matrix $M$, will be called \emph{eigenvalues of multiplicity zero} of $M$. 
Furthermore, we denote:

\smallskip

$\Omega:= \begin{pmatrix}
0 & -1 \\ 
1 & 0
\end{pmatrix} 
$;  
$\Omega_{_n} :=\begin{pmatrix}
{\bf 0}_{_n} & -I_{_n} \\ 
I_{_n} & {\bf 0}_{_n}
\end{pmatrix}$; 
hence $\Omega_{_1}= \Omega$,  while, for $n \ge 2$, we have  $\Omega_{_n} \ne \Omega^{\oplus n}$;

\smallskip

$W_{_{(p,q)}} := I_{p} \oplus {\bf i}I_{q}$, \ \ for every $ p, q  \geq 0$ such that $p+q \ge 1$  \ ($W_{_{(p,q)}}$ is unitary and diagonal);

$E_{_{\varphi}}:=
\begin{pmatrix}
\cos(\varphi) & -\sin(\varphi) \\ 
\sin(\varphi) & \cos(\varphi)
\end{pmatrix} = \cos(\varphi) \, I_2 + \sin(\varphi) \, \Omega
$, with $\varphi \in \mathbb{R}$, \ so \  $\Omega = E_{_{\pi/2}}$ \ and  \ $E_{_{\varphi}}^{\oplus h} = \cos(\varphi) I_{_{2h}} + \sin(\varphi) \Omega^{\oplus h}$ for every $h \ge 1$; \ \ \ 

moreover, for every $p, q \ge 0$ with $p+q \ge 1$, \ \ \ \ $E_{_{\varphi}}^{(p, q)} := E_{_{\varphi}}^{\oplus p} \oplus (-E_{_{\varphi}})^{\oplus q}$ (so $E_{_{\varphi}}^{(n, 0)}=  E_{_{\varphi}}^{\oplus n})$ \  and \ 
$J^{(p, q)}:= I_{_p} \oplus (-I_{_q}) = E_{_0}^{(p, q)}$ \ \ \ (so $J^{(p, 0)}= I_{_p}$ and $J^{(0, q)}= - I_{_q}$).

\smallskip

b) As usual, 
\ $O_n:=\{ X \in \mathfrak\mathfrak{gl}_n (\mathbb{R}) : XX^T = I_{_n}\}$ is the real orthogonal group; 

$U_n:=\{ X \in \mathfrak{gl}_n (\mathbb{C}) : XX^* = I_{_n}\}$ is the (complex) unitary group;

$SO_n :=\{ X \in O_n : \det(X) =1 \}$, $SU_n :=\{ X \in U_n : \det(X) =1 \}$ are their special subgroups;
\ while \ $U_n (\mathbb{H}):=\{ X \in \mathfrak{gl}_n (\mathbb{H}) : XX^* = I_{_n}\}$ is the quaternionic unitary group.

Note that the identification (recalled in (a)) of $\mathbb{C}$ as a subalgebra of $\mathbb{H}$, allows to identify $U_n$ with a subgroup of $U_n (\mathbb{H})$. In this paper this identification is always implied and not explicitly indicated.
\ \ Furthermore, for every $ p, q \ge 0$, with $p+q \ge 1$,

$O_{(p, q)} (\mathbb{C}):= \{X \in  \mathfrak{gl}_{(p+q)} (\mathbb{C}) : X J^{(p, q)} X^T = J^{(p, q)}\}$,

$SO_{(p, q)} (\mathbb{C}):= \{X \in  O_{(p, q)} (\mathbb{C}) : det(X)=1\}$,

$O_{(p, q)}:= O_{(p, q)} (\mathbb{C}) \cap \mathfrak{gl}_{(p+q)} (\mathbb{R})$, 
\ \ \ \ $SO_{(p, q)}:= SO_{(p, q)} (\mathbb{C}) \cap \mathfrak{gl}_{(p+q)} (\mathbb{R})$, 

are the complex and real  indefinite orthogonal groups, with their special subgroups;

$U_{(p, q)}:= \{X \in  \mathfrak{gl}_{(p+q)} (\mathbb{C}) : X J^{(p, q)} X^* = J^{(p, q)}\}$ 
is the indefinite unitary group.
Finally
$Sp_{2n} (\mathbb{C}) := \{ X \in \mathfrak{gl}_{2n} (\mathbb{C}) : X \Omega_{_n} X^T = \Omega_{_n}\}$ \ \ and \ \ 
$Sp_{2n} (\mathbb{R}):= Sp_{2n} (\mathbb{C}) \cap \mathfrak{gl}_{2n} (\mathbb{R})$ \ are,

respectively, the complex and real symplectic groups;
while \ $Sp_n := Sp_{2n} (\mathbb{C}) \cap U_{2n}$ \ is the compact symplectic group.
Of course, all the previous are real Lie groups of matrices. 

We recall that a well-known Cartan theorem states that a subgroup $H$ of a given Lie group $G$  is closed if and only if it is an embedded real submanifold of $G$. Of course, if the Lie group $G$ is compact, then every closed subgroup of $G$ is compact too.

If $G$ is any Lie group and $P \in G$, then $T_{_P} (G)$ denotes the tangent space of $G$ at $P$. 

\smallskip

c) The Lie algebras related to the previous Lie groups are denoted by:

$\mathfrak{so}_n = \{A \in \mathfrak\mathfrak{gl}_n (\mathbb{R}) : A = -A^T\}$, the Lie algebra of both $O_n$ and $SO_n$;

$\mathfrak{u}_{n}= \{A \in \mathfrak{gl}_n (\mathbb{C}) : A = -A^*\}$, the Lie algebra of $U_n$;

$\mathfrak{su}_n = \{A \in \mathfrak{gl}_n (\mathbb{C}) : A = -A^*, \ tr(A)=0 \}$, the Lie algebra of $SU_n$;

$\mathfrak{u}_n (\mathbb{H}) = \{A \in \mathfrak{gl}_n (\mathbb{H}) : A = -A^* \}$, the Lie algebra of $U_n (\mathbb{H})$.

The Lie algebras of the remaining Lie groups will be denoted by the corresponding small gothic letters: for instance, $\mathfrak{so}_{(p, q)} (\mathbb{C})$ and  $\mathfrak{sp}_n$ are the Lie algebras of $SO_{(p, q)} (\mathbb{C})$ and  of $Sp_n$, respectively.

\smallskip

d) If $B \in GL_n (\mathbb{C})$, we denote by $Ad_{_B}$ the map from $\mathfrak{gl}_n (\mathbb{C})$ onto itself, defined by 

$Ad_{_B}: A \mapsto Ad_{_B}(A):= BAB^{-1}$. 
Note that $Ad_{_B}$ commutes with the exponential map.
In this paper, we will still denote by  $Ad_{_B}$ the restriction of this map to any subset of $\mathfrak{gl}_n (\mathbb{C})$.
We indicate with $\tau$, $\mu$ and $\eta$ the  maps from $\mathfrak{gl}_n (\mathbb{C})$ onto itself, given by:
\ \ $\tau: A \mapsto A^T$,  \ \ \ \ \ \ $\mu: A \mapsto \overline{A}$, \ \ \ \ \ \ $\eta: A \mapsto A^{*}$. 
The maps $\mu$, $-\tau$, $-\eta$ and $Ad_{_B}$ (with $B \in GL_n (\mathbb{C})$) are automorphisms of the real Lie algebra $\mathfrak{gl}_n (\mathbb{C})$; furthermore, the automorphisms $\mu$, $-\tau$, $-\eta$ are involutive, mutually commuting and the composition of any two of them is the third automorphism; hence the group generated by \ $\mu$, $-\tau$, $-\eta$ \ is isomorphic to $\mathbb{Z}_{_2} \oplus \mathbb{Z}_{_2}$.

\smallskip

e) We denote by ${\phi}$  the \emph{Frobenius} (or \emph{Hilbert-Schmidt}) positive definite  real scalar product on $\mathfrak{gl}_n (\mathbb{C})$, defined by ${\phi}(A, B) := Re(tr(A B ^*))$, and we denote by $\Vert A \Vert_{_\phi} := \sqrt{{\phi}(A, A)} = \sqrt{tr(AA^*)}$, the related \emph{Frobenius norm}. Note that, if $A \in \mathfrak{u}_n$, then 
$\Vert A \Vert_{_\phi}^2 =  -tr(A^2)$. Since the eigenvalues of the skew-hermitian matrix $A$ are purely imaginary, we also get $\Vert A \Vert_{_\phi} = \sqrt{-tr(A^2)} = \sqrt{\sum\limits_{j=1}^n |\lambda_{_j}|^2}$, where $\lambda_{_1} , \cdots , \lambda_{_n}$ are the $n$ eigenvalues of $A$.
\end{notas}

\begin{rems}\label{identificazioni} 
a) The map
$\rho: \mathbb{C} \to \mathfrak{gl}_2 (\mathbb{R})
$, given by $\rho(z) :=  Re(z) I_{_2} + Im(z) \Omega= \begin{pmatrix}
Re(z) & -Im(z) \\ 
Im(z) & Re(z)
\end{pmatrix} 
$,
 is a monomorphism of $\mathbb{R}$-algebras, such that
 $\rho({\overline{z}}) = \rho(z)^T$
and such that $\rho(z) \in GL_2 (\mathbb{R})$ as soon as $z \ne 0$.
More generally, for any $h \ge 1$, we denote again by $\rho$ the mapping: $\mathfrak{gl}_h (\mathbb{C}) \to \mathfrak{gl}_{2h} (\mathbb{R})$, which maps the $h \times h$ complex matrix $Z=(z_{ij})$ to the block matrix  $\rho(Z)=(\rho(z_{ij})) \in \mathfrak{gl}_{2h} (\mathbb{R})$, having $h^2$ blocks of order $2 \times 2$. We say that $\rho$ is the \emph{decomplexification} map.
It is not hard to prove that, if $\lambda_1, \cdots, \lambda_h$ are the $h$ eigenvalues of any matrix $Z \in \mathfrak{gl}_h (\mathbb{C})$, then $\lambda_1, \overline{\lambda}_1, \cdots, \lambda_h, \overline{\lambda}_h$ are the $2h$ eigenvalues of $\rho(Z) \in  \mathfrak{gl}_{2h} (\mathbb{R})$ and that $\rho$ is a monomorphism of $\mathbb{R}$-algebras, whose restriction to $GL_h (\mathbb{C})$ is a monomorphism of Lie groups, having as image
$\rho \big(\mathfrak{gl}_h (\mathbb{C}) \big) \cap GL_{2h} (\mathbb{R})$.
We have also $\rho(Z^*) = \rho(Z)^T$; so, the restriction of $\rho$ to $U_h$ is a monomorphism of Lie groups
and 
$\rho \big( U_h \big)=  \rho \big( \mathfrak{gl}_h (\mathbb{C}) \big) \cap SO_{2h}$.
From now on, to simplify the notations, the map $\rho$ will be omitted, hence we will regard  the real Lie algebra $\mathfrak{gl}_h (\mathbb{C})$ as Lie subalgebra of $\mathfrak{gl}_{2h} (\mathbb{R})$, the Lie groups $GL_h (\mathbb{C})$ and $U_h$ as closed subgroups of $GL_{2h} (\mathbb{R})$ and $SO_{2h}$, respectively; in particular we will write $U_h =   \mathfrak{gl}_h (\mathbb{C})  \cap SO_{2h}$.

\smallskip

b) We denote by $\Psi : \mathbb{H} \to \mathfrak{gl}_2 (\mathbb{C})$ the map: $z + w {\bf j}\mapsto  \Psi(z + w {\bf j}) :=\begin{pmatrix}
z & -w \\ 
\overline{w} & \overline{z}
\end{pmatrix}$, where $z, w \in \mathbb{C}$\ ; this map is a monomorphism of $\mathbb{R}$-algebras. Note that, for every $q \in \mathbb{H}$, we have  $\Psi(\overline{q}) = (\Psi(q))^*$.
It is possible to extend this map to a monomorphism of $\mathbb{R}$-algebras (still denoted by the same symbol)  
$\Psi: \mathfrak{gl}_h (\mathbb{H}) \to \mathfrak{gl}_{2h} (\mathbb{C})$ \ ($h\geq1$), which maps the $h \times h$ quaternion matrix $Q=(q_{_{ij}})$ to the block matrix $\Psi(Q)=\big( \Psi(q_{_{ij}})\big) \in \mathfrak{gl}_{2h} (\mathbb{C})$, having $h^2$ blocks of order $2 \times 2$.
It can be easily checked that we have $\Psi(A^*) = (\Psi(A))^*$ and $(\Omega^{\oplus h}) \Psi(A^*) (\Omega^{\oplus h})^T = (\Psi(A))^T$, for every $A \in \mathfrak{gl}_h (\mathbb{H})$. Moreover, $\Psi$ maps $GL_h (\mathbb{H})$ into $GL_{2h} (\mathbb{C})$ and $U_h (\mathbb{H})$ into $U_{2h}$; both restrictions $GL_h (\mathbb{H}) \to GL_{2h} (\mathbb{C})$ and  $U_h (\mathbb{H}) \to U_{2h}$ are monomorphisms of Lie groups. 
Hence, up to the isomorphim $\Psi$, we will consider $\mathfrak{gl}_h (\mathbb{H})$ as real Lie subalgebra of $\mathfrak{gl}_{2h} (\mathbb{C})$, \ \ $GL_h (\mathbb{H})$ as closed subgroup of $GL_{2h} (\mathbb{C})$ \ and \ $U_h (\mathbb{H})$ as closed subgroup of $U_{2h}$.

Note also that the monomorphism $\Psi$ maps the closed subgroup $U_h$ of $U_h (\mathbb{H})$ onto a closed subgroup of $\Psi(U_h (\mathbb{H})) \subset U_{2h}$, so that the elements of $\Psi(U_h)$ are  the $2h \times 2h$ complex unitary matrices, having $h^2$ blocks $Z_{_{ij}}$ of the form: $Z_{_{ij}} = \begin{pmatrix}
z_{_{ij}} & 0 \\ 
0 & \overline{z}_{_{ij}}
\end{pmatrix}$, with $z_{_{ij}} \in \mathbb{C}$.

As in the case of the map $\rho$, from now on, to simplify the notations, we will omit to indicate the map $\Psi$ and so, for instance, we will simply write $U_h (\mathbb{H})=U_{2h} \cap \mathfrak{gl}_h (\mathbb{H})$ and $\mathfrak{u}_h (\mathbb{H})=\mathfrak{u}_{2h} \cap \mathfrak{gl}_h (\mathbb{H})$. From this last equality, we easily get that  every matrix of $\mathfrak{u}_h (\mathbb{H})$ has trace $0$. Therefore, since $U_h (\mathbb{H}) = \exp\big(\mathfrak{u}_h (\mathbb{H})\big)$, the  group $U_h (\mathbb{H})$ is contained in  $SU_{2h}$, hence \ $U_h (\mathbb{H})=SU_{2h} \cap \mathfrak{gl}_h (\mathbb{H})$ \ and \ $\mathfrak{u}_h (\mathbb{H})=\mathfrak{su}_{2h} \cap \mathfrak{gl}_h (\mathbb{H})$.

\smallskip

c) Fixed $n \ge 1$, for  any $i, j = 1 , \cdots , 2n$, let $W(i,j)$ be the square matrix of order $2n$, having $1$ at the entry $(i, j)$ and $0$ elsewhere, and let $B$ be the $2n \times 2n$ real matrix defined by $B := \sum\limits_{j=1}^n \big( W( j, 2j-1) + W( n+j, 2j) \big)$. Since $W(i, j) W(h,k) = \delta_{_{jh}} W(i,k)$, it is easy to check that $B$ is an orthogonal matrix such that $B^T \Omega_{_n}B = \Omega^{\oplus n}$\ ; from this, one can get that $X$ belongs to $U_n (\mathbb{H})$ if and only if $B\,X B^T$ belongs to $Sp_n$\ , i.e. $Ad_{_B}\big(U_n (\mathbb{H})\big) = Sp_n$. It is also easy to check that $Ad_{_B}$ maps the closed subgroup $U_n$ of $U_n (\mathbb{H})$ onto the closed subgroup of $Sp_n$ of matrices of the form $A \oplus \overline{A}$ with $A \in U_n$. Hence $U_n$ can be regarded as the closed subgroup of $Sp_n$ of matrices of this form, and so, the simply connected compact symmetric homogeneous space $\dfrac{Sp_n}{U_n}$, obtained in this way , is diffeomorphic to $\dfrac{U_n (\mathbb{H})}{U_n}$.

\smallskip

d) Let $\Phi$ be the automorphism of $\mathbb{R}$-algebra $\mathbb{H}$, defined by $\Phi(t+ x {\bf i} +  y {\bf j} + z {\bf k} )= t+ y {\bf i} + x {\bf j} - z {\bf k} $, for every $t, x, y, z \in \mathbb{R}$. We have: $\Phi(\overline{q})=\overline{\Phi(q)}$, for every $q \in \mathbb{H}$. Acting on each single entry of the matrix, this map induces an automorphism (still denoted by $\Phi$) of the $\mathbb{R}$-algebra $\mathfrak{gl}_n (\mathbb{H})$. Since $ \Phi(A^*) = \Phi(A)^* $, for every $A \in \mathfrak{gl}_n (\mathbb{H})$, the restriction of $\Phi$ to $U_n (\mathbb{H})$ is an automorphism of Lie group $U_n (\mathbb{H})$, which maps $U_n$ onto a closed subgroup of $U_n (\mathbb{H})$. Hence the homogeneous space $\dfrac{U_n (\mathbb{H})}{\Phi(U_n)}$ is diffeomorphic to $\dfrac{U_n (\mathbb{H})}{U_n}$ and, by (c), also to $\dfrac{Sp_n}{U_n}$.

Remembering (b), up to the map $\Psi$, the subgroup $\Phi(U_n)$ of $U_n (\mathbb{H})$ can be identified with the subgroup of $U_{2n}$, whose elements are  the $2n \times 2n$ special orthogonal matrices, having $n^2$ real blocks $U_{_{ij}}$ of the form: $U_{_{ij}} = \begin{pmatrix}
x_{_{ij}} & - y_{_{ij}} \\ 
y_{_{ij}} & x_{_{ij}}
\end{pmatrix}$. Note that, remembering (a), the restriction of $\Phi$ to $U_n$  agrees with the restriction to $U_n$ of the decomplexification map $\rho$.
\end{rems}

\section{Commuting matrices and SVD-systems}\label{preliminari2}

\begin{nota}\label{parentesi-angolari}
Let $\mathcal{S} \subseteq \mathfrak{gl}_n (\mathbb{C})$ and  $M \in \mathfrak{gl}_n (\mathbb{C})$.  We denote

$\langle  M \rangle_{_{\mathcal{S}}} := \{ X \in \mathcal{S} : XM = MX \}$ \ \ \ \ \ \  and \ \ \ \ \ 
$\preccurlyeq M \succcurlyeq_{_{\mathcal{S}}} := \{ X \in \mathcal{S} : XM = M\overline{X} \}$.
\end{nota}

\begin{rems}\label{parentesi-angolari2}
a) Let $A \in U_n$ , $M \in \mathfrak{gl}_n (\mathbb{C})$ and  $\mathcal{S} \subseteq \mathfrak{gl}_n (\mathbb{C})$. It is easy to check that $Ad_{_A}\big(\preccurlyeq M \succcurlyeq_{_{\mathcal{S}}}\big) = \ \preccurlyeq AMA^T \succcurlyeq_{_{Ad_{_{\!A}}\!(\mathcal{S})}} .$ 

In particular, if $A \in O_n$, we get $Ad_{_A}\big(\preccurlyeq M \succcurlyeq_{_{\mathcal{S}}}\big) = \ \preccurlyeq Ad_{_{A}}(M) \succcurlyeq_{_{Ad_{_{\!A}}\!(\mathcal{S})}} .$

b) Let $G$ be a closed subgroup of $GL_n (\mathbb{C})$, having $\mathfrak{g} \subseteq \mathfrak{gl}_n (\mathbb{C})$ as Lie algebra and let M be any matrix in $\mathfrak{gl}_n (\mathbb{C})$. Then $\langle  M \rangle_{_G}$  and $\preccurlyeq M \succcurlyeq_{_G}$ are closed subgroups of $G$, whose Lie algebras are $\langle  M \rangle_{_{\mathfrak{g}}}$  and $\preccurlyeq M \succcurlyeq_{_{\mathfrak{g}}}$, respectively.
\end{rems}

\begin{lemma}\label{commutare-phi} a) Let $\varphi \in \mathbb{R}$, $\varphi \ne k \pi$, $k \in \mathbb{Z}$. Any matrix of $\mathfrak{gl}_{2n} (\mathbb{C})$ commutes with $E_{_{\varphi}}^{\oplus n}$ if and only if it commutes with $\Omega^{\oplus n}$, i.e.  $\langle E_{_{\varphi}}^{\oplus n} \rangle_{_{\mathfrak{g\!l}_{2n}\!(\mathbb{C})}} = \langle \Omega^{\oplus n} \rangle_{_{\mathfrak{g\!l}_{2n}\!(\mathbb{C})}}\ .$

b) Let $\mathcal{S}$ be any subset of $\mathfrak{gl}_{2n} (\mathbb{C})$, then $\langle \Omega^{\oplus n} \rangle_{_{\mathcal{S}}}$ consists of the matrices of $\mathcal{S}$, having $n^2$ blocks of the form: $X_{_{ij}} = 
\begin{pmatrix}
a_{_{ij}} & -b_{_{ij}} \\ 
b_{_{ij}} & a_{_{ij}}
\end{pmatrix} 
$, with $a_{_{ij}}, b_{_{ij}} \in \mathbb{C}$.
\end{lemma}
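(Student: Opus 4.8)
The plan is to prove the two assertions in order, the first by a direct eigenvalue/spectral argument and the second by a block-matrix computation.

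\smallskip

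\textbf{Part (a).} Write $E_{_\varphi}^{\oplus n} = \cos(\varphi)\,I_{_{2n}} + \sin(\varphi)\,\Omega^{\oplus n}$, which is exactly the identity recorded in Notations~\ref{notazioni}(a). Hence for any $Y \in \mathfrak{gl}_{2n}(\mathbb{C})$ we have $Y E_{_\varphi}^{\oplus n} - E_{_\varphi}^{\oplus n} Y = \sin(\varphi)\,(Y\Omega^{\oplus n} - \Omega^{\oplus n} Y)$. Since $\varphi \ne k\pi$ for $k \in \mathbb{Z}$, we have $\sin(\varphi) \ne 0$, so $Y$ commutes with $E_{_\varphi}^{\oplus n}$ if and only if $Y$ commutes with $\Omega^{\oplus n}$; restricting to $\mathcal{S} = \mathfrak{gl}_{2n}(\mathbb{C})$ gives the claimed equality of centralizers. (In fact this shows the equality $\langle E_{_\varphi}^{\oplus n}\rangle_{_{\mathcal S}} = \langle \Omega^{\oplus n}\rangle_{_{\mathcal S}}$ for \emph{every} subset $\mathcal S \subseteq \mathfrak{gl}_{2n}(\mathbb{C})$, which is what we will use in part (b) combined with the next point.) No obstacle here; it is immediate from the stated formula.

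\smallskip

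\textbf{Part (b).} It suffices to compute $\langle \Omega^{\oplus n}\rangle_{_{\mathfrak{gl}_{2n}(\mathbb{C})}}$ and then intersect with an arbitrary subset $\mathcal{S}$. Decompose a matrix $Y \in \mathfrak{gl}_{2n}(\mathbb{C})$ into $n^2$ blocks $Y_{_{ij}}$ of size $2\times 2$; since $\Omega^{\oplus n}$ is itself block-diagonal with all diagonal blocks equal to $\Omega$, the commutation relation $Y\Omega^{\oplus n} = \Omega^{\oplus n}Y$ decouples into the $n^2$ scalar-block conditions $Y_{_{ij}}\Omega = \Omega Y_{_{ij}}$. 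Writing $Y_{_{ij}} = \begin{pmatrix} a & b \\ c & d \end{pmatrix}$ with $a,b,c,d \in \mathbb{C}$ and computing both products, the equation $Y_{_{ij}}\Omega = \Omega Y_{_{ij}}$ becomes $\begin{pmatrix} b & -a \\ d & -c\end{pmatrix} = \begin{pmatrix} -c & -d \\ a & b\end{pmatrix}$, i.e. $d = a$ and $c = -b$. Thus each commuting block has the stated form $X_{_{ij}} = \begin{pmatrix} a_{_{ij}} & -b_{_{ij}} \\ b_{_{ij}} & a_{_{ij}}\end{pmatrix}$ with $a_{_{ij}}, b_{_{ij}} \in \mathbb{C}$, and conversely every such block manifestly commutes with $\Omega$ (this is exactly the image of $\mathbb{C}$ under the decomplexification map $\rho$ of Remarks~\ref{identificazioni}(a), whose elements commute with $\Omega$ since $\mathbb{C}$ is commutative). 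Finally, $\langle \Omega^{\oplus n}\rangle_{_{\mathcal S}} = \mathcal{S} \cap \langle \Omega^{\oplus n}\rangle_{_{\mathfrak{gl}_{2n}(\mathbb{C})}}$ by definition of $\langle \,\cdot\,\rangle_{_{\mathcal S}}$, which yields the statement.

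\smallskip

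I do not expect any serious obstacle: the whole lemma is a bookkeeping computation built on the single nontrivial input $E_{_\varphi}^{\oplus n} = \cos(\varphi)I_{_{2n}} + \sin(\varphi)\Omega^{\oplus n}$ and the invertibility of $\sin(\varphi)$ away from multiples of $\pi$. The only point requiring a little care is keeping the block indexing straight in part (b) — verifying that the commutation condition for $\Omega^{\oplus n}$ really does decouple blockwise (which it does precisely because $\Omega^{\oplus n}$ is block-diagonal with constant $2\times 2$ diagonal blocks) — and then being explicit that the resulting blockwise conditions are both necessary and sufficient.
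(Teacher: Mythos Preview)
Your proof is correct and follows essentially the same approach as the paper: for (a) you use the identity $E_{_\varphi}^{\oplus n} = \cos(\varphi)I_{_{2n}} + \sin(\varphi)\Omega^{\oplus n}$ together with $\sin(\varphi)\ne 0$, and for (b) you reduce to the blockwise condition $X_{_{ij}}\Omega = \Omega X_{_{ij}}$ and solve it explicitly. The paper's proof is the same argument, only stated more tersely.
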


\begin{proof}
Part (a) is trivial and follows from $E_{_{\varphi}}^{\oplus n} = \cos(\varphi) I_{_{2n}} + \sin(\varphi) \Omega^{\oplus n}$ and $\sin(\varphi)\neq 0$.

For part (b), we can write an arbitrary matrix of $\mathcal{S}$ in $n^2$ blocks, $X_{_{ij}}$, each of them of order $2$.  We easily get that such a matrix commutes with $\Omega^{\oplus n}$ if and only if each block commutes with $\Omega$, i. e. if and only if each $X_{_{ij}}$ is of the form stated in (b).
\end{proof}

\begin{lemma}\label{anticomm}
Let $D:= \bigoplus\limits_{j=1}^s  D_{_j} \in \mathfrak{gl}_n (\mathbb{C})$ be a block diagonal matrix, with 
$D_{_j} \in \mathfrak{gl}_{n_{_j}} (\mathbb{C})$ simisimple matrices.  Denote by $S_{_j}$ and by $-S_{_j}$ ($j = 1, \cdots , s$), respectively, the set of the eigenvalues of $D_{_j}$ and the sets of their opposites.

a) Assume that $S_{_i} \cap (-S_{_j}) = \emptyset$ as soon as $i \ne j$.
Then a matrix $A \in \mathfrak{gl}_n (\mathbb{C})$ anticommutes with $D$ if and only if $A = \bigoplus\limits_{j=1}^s  A_{_j}$, where each $A_{_j}$ belongs to $\mathfrak{gl}_{n_{_j}} (\mathbb{C})$ and anticommutes with $D_{_j}$.

b) Assume that $S_{_i} \cap S_{_j} = \emptyset$ as soon as $i \ne j$.
Then a matrix $A \in \mathfrak{gl}_n (\mathbb{C})$ commutes with $D$ if and only if $A = \bigoplus\limits_{j=1}^s  A_{_j}$, where each $A_{_j}$ belongs to $\mathfrak{gl}_{n_{_j}} (\mathbb{C})$ and commutes with $D_{_j}$.
\end{lemma}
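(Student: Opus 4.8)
The plan is to reduce the statement to a classical fact about matrices commuting (or anticommuting) with a semisimple matrix, namely that such a matrix must respect the eigenspace decomposition. First I would pass to a situation where each $D_{_j}$ is diagonal: since each $D_{_j}$ is semisimple, there exists $P_{_j} \in GL_{n_{_j}}(\mathbb{C})$ with $P_{_j} D_{_j} P_{_j}^{-1}$ diagonal, and conjugating the whole situation by $P := \bigoplus_j P_{_j}$ turns $D$ into a diagonal matrix while preserving both the block structure and the commuting/anticommuting relations (via $Ad_{_P}$, as recalled in Notation \ref{notazioni}(d)). Note that passing to $P$ does not change the eigenvalue sets $S_{_j}$, so the hypotheses $S_{_i} \cap (-S_{_j}) = \emptyset$ or $S_{_i} \cap S_{_j} = \emptyset$ for $i \ne j$ are unaffected.

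Once $D = \mathrm{diag}(\lambda_{_1}, \dots, \lambda_{_n})$ is diagonal, I would argue entrywise. Write $A = (a_{_{hk}})$. Then $DA - AD$ has $(h,k)$-entry $(\lambda_{_h} - \lambda_{_k}) a_{_{hk}}$, and $DA + AD$ has $(h,k)$-entry $(\lambda_{_h} + \lambda_{_k}) a_{_{hk}}$. For part (b): if $A$ commutes with $D$, then $a_{_{hk}} = 0$ whenever $\lambda_{_h} \ne \lambda_{_k}$; in particular, if $h$ and $k$ index coordinates lying in different diagonal blocks $j \ne j'$, then $\lambda_{_h} \in S_{_j}$ and $\lambda_{_k} \in S_{_{j'}}$, and the hypothesis $S_{_j} \cap S_{_{j'}} = \emptyset$ forces $\lambda_{_h} \ne \lambda_{_k}$, hence $a_{_{hk}} = 0$. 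Thus the off-(block-)diagonal part of $A$ vanishes, so $A = \bigoplus_j A_{_j}$, and then $DA = AD$ is equivalent to $D_{_j} A_{_j} = A_{_j} D_{_j}$ for each $j$; the converse is obvious. For part (a): if $A$ anticommutes with $D$, then $a_{_{hk}} = 0$ whenever $\lambda_{_h} + \lambda_{_k} \ne 0$, i.e. whenever $\lambda_{_h} \ne -\lambda_{_k}$; for $h, k$ in different blocks $j \ne j'$ we have $\lambda_{_h} \in S_{_j}$ and $-\lambda_{_k} \in -S_{_{j'}}$, and $S_{_j} \cap (-S_{_{j'}}) = \emptyset$ gives $\lambda_{_h} \ne -\lambda_{_k}$, hence $a_{_{hk}} = 0$; the rest is as before, with anticommuting replacing commuting.

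There is essentially no hard part here: this is a standard observation, and the only mild care needed is the bookkeeping with the conventions for $s = 0$ or $n_{_j} = 0$ set up in Notation \ref{notazioni}(a), where the statement holds vacuously. One should also note that in part (a) it is enough that the off-diagonal blocks of $A$ vanish \emph{between} distinct $D_{_j}$; within a single block $A_{_j}$ may of course have arbitrary structure compatible with anticommuting with $D_{_j}$. I would present the diagonalization reduction once and then treat (a) and (b) in parallel, since the arguments differ only by the sign in the bracket and by which hypothesis on the eigenvalue sets is invoked.
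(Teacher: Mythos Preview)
Your proof is correct and follows essentially the same approach as the paper: both arguments show that the off-diagonal blocks $A_{_{ij}}$ (for $i\ne j$) must vanish by exploiting the eigenvalue disjointness hypothesis, and then observe that the block-diagonal condition reduces the (anti)commutation to each block. The only cosmetic difference is that you first conjugate by $P=\bigoplus_j P_{_j}$ to make $D$ genuinely diagonal and then argue entrywise, whereas the paper works directly with an eigenvector basis of each $D_{_j}$ and shows that $A_{_{ij}}v$ would be an eigenvector of $D_{_i}$ with a forbidden eigenvalue; these are the same idea expressed at slightly different levels of granularity.
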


\begin{proof}
We proof only part (a), being part (b) similar and easier.

We write the matrix $A$ in blocks $A = (A_{_{ij}})$, consistent with the block structure of $D$, so the condition $A D = - D A$ is equivalent to $A_{_{ij}} D_{_j} = -D_{_i} A_{_{ij}}$, for $i, j = 1 , \ \cdots , n$. Assume $i \ne j$ and let $\mathcal{B}$ be a basis of $\mathbb{C}^{n_{_j}}$, consisting of eigenvectors of $D_{_j}$. If $v \in \mathcal{B}$, with associated eigenvalue $\lambda$, then $D_{_i} (A_{_{ij}} v) = - A_{_{ij}} D_{_j} v = - \lambda (A_{_{ij}} v)$. This implies that $A_{_{ij}} v =0$, otherwise (against the assumptions made) $-\lambda$ would be eigenvalue of $D_i$. This holds for every $v \in \mathcal{B}$ and so, $A_{_{ij}}= {\bf 0}$, as soon as $i \ne j$. Therefore $A= \bigoplus\limits_{j=1}^s A_{_{jj}}$, where each $A_{_{jj}}$ anticommutes with $D_{_j}$.
The converse is trivial.
\end{proof}

\begin{remdef}\label{Ad-orbit}
If $M \in \mathfrak{gl}_n (\mathbb{C})$ and $G$ is a closed subgroup of  $GL_n (\mathbb{C})$, we call $Ad(G)$-\emph{orbit} of $M$, denoted by $Ad\big(G\big)(M)$, the set $\{Ad_{_B}(M)=BMB^{-1} : B \in G\}$. 

It is well-known that each orbit $Ad\big(G\big)(M)$ is an immersed submanifold of $\mathfrak{gl}_n (\mathbb{C})$, diffeomorphic to the homogeneous space $\dfrac{G}{\langle M \rangle_{_G}}$, being $\langle M \rangle_{_G}$ the isotropy subgroup of $M$ with respect to the action of $G$; furthermore, if $G$ is compact, then $Ad\big(G\big)(M)$ is a compact (embedded) submanifold of $\mathfrak{gl}_n (\mathbb{C})$
(see, for instance, \cite{EoM-Orbit}).
\end{remdef}


\begin{remsdefs}\label{SVD-rec}
A non-empty family of matrices $A_{_1}, \cdots , A_{_p} \in \mathfrak{gl}_n (\mathbb{C}) \setminus \{0\}$ is said to
be an \emph{SVD-system}, if
\ \ $A_{_h}^* A_{_j} = A_{_h} A_{_j}^* =0$, \ \ for every $h \ne j$, \ \ and
\ \ $A_{_j} A_{_j}^* A_{_j} = A_{_j}$,\ \  for every $j=1, \cdots, p$\ .
\ Note that, if $A_{_1}, \cdots , A_{_p}$ is an SVD-system, then

a) the matrices $A_{_1}, \cdots , A_{_p}$ are linearly independent over $\mathbb{C}$;

b) $c_{_1} A_{_1}, c_{_2}A_{_2},  \cdots , c_{_p}A_{_p}$ is still an SVD-system, \ if $c_{_j} \in \mathbb{C}$ \ and \ $|c_{_j}| = 1$, \ for  $j=1 , \cdots, p$.

We call \emph{SVD-decomposition} of $M \in \mathfrak{gl}_n (\mathbb{C}) \setminus \{0\}$, any decomposition
$M = \sum\limits_{j=1}^p \sigma_{_j} A_{_j}$,
where $A_{_1}, \cdots , A_{_p} \in \mathfrak{gl}_n (\mathbb{C}) \setminus \{0\}$ form an SVD-system and $\sigma_{_1} > \sigma_{_2} > \dots > \sigma_{_p} >0$ are positive real numbers.
\ Any matrix $M \in \mathfrak{gl}_n (\mathbb{C})  \setminus \{0\}$ has an SVD-decomposition $M = \sum\limits_{j=1}^p \sigma_{_j} A_{_j}$ and this decomposition is unique, i.e. if $M = \sum\limits_{h=1}^q \tau_{_h} B_{_h}$ is another SVD-decomposition, then $p=q$, $\sigma_{_j} = \tau_{_j}$ and $A_{_j}=B_{_j}$ for every $j=1, \cdots, p$.
The positive numbers $\sigma_{_1} , \sigma_{_2} , \dots , \sigma_{_p}$ are the distinct square roots of the non-zero eigenvalues of $M^*M$; they are known as the \emph{non-zero singular values} of $M$. We say that the matrices $A_{_1} , \cdots A_{_p}$ are the 
\emph{SVD-components} of $M$. For more information, see for instance \cite[Thm.\,2.6.3]{HoJ2013}, \cite[Thm.3.4]{OttPaol2015} and also \cite[\S\,4]{DoPe2017}.
\end{remsdefs}

\begin{lemma}\label{autov-SVD-A}
Let $A_{_1}, \cdots , A_{_p}$ be an SVD-system of skew-hermitian matrices of order $n$, let \ $\theta_{_1} > \theta_{_2} > \cdots > \theta_{_p}$ be real numbers and denote $M:= \sum\limits_{j=1}^p \theta_{_j} A_{_j}$. Then

a) the eigenvalues of $A_{_j}$ are: \ ${\bf i}$ \ with multiplicity \ $\mu_{_j} \ge 0$, \ ${-\bf i}$ \ with multiplicity \ $\nu_{_j} \ge 0$ (where $\mu_{_j}+ \nu_{_j} \ge 1$) and \ $0$ \ with multiplicity \ $n- (\mu_{_j}+\nu_{_j}) \ge 0$, \ for every $j=1, \cdots, p$;

b) the distinct eigenvalues of $M$ are ${\bf i}\theta_{_j}$ with multiplicity $\mu_{_j} \ge 0$, ${-\bf i}\theta_{_j}$ with multiplicity $\nu_{_j} \ge 0$ (for $j=1, \cdots , p$ and $\sum\limits_{j=1}^p(\mu_{_j}+\nu_{_j}) \ge p$), and $0$ with multiplicity $n- \sum\limits_{j=1}^p(\mu_{_j}+\nu_{_j}) \ge 0$.
\end{lemma}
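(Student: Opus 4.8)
The proof plan is to exploit the defining relations of an SVD-system to show that the matrices $A_{_j}$ are simultaneously ``block-diagonalizable'' in a very rigid way, and then to read off the eigenvalues of $M$ from this structure. First I would analyze a single $A_{_j}$: since $A_{_j}$ is skew-hermitian, it is diagonalizable with purely imaginary eigenvalues; the relation $A_{_j} A_{_j}^* A_{_j} = A_{_j}$ becomes $A_{_j}(-A_{_j})A_{_j} = A_{_j}$, i.e. $-A_{_j}^3 = A_{_j}$, so $A_{_j}^3 = -A_{_j}$ and hence $A_{_j}$ annihilates the polynomial $t(t^2+1)$. Its eigenvalues therefore lie in $\{0, {\bf i}, -{\bf i}\}$, with respective multiplicities $n-(\mu_{_j}+\nu_{_j})$, $\mu_{_j}$, $\nu_{_j}$; since $A_{_j}\ne 0$ we have $\mu_{_j}+\nu_{_j}\ge 1$. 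This proves (a).

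For part (b), the key observation is that the ``cross'' relations $A_{_h}^* A_{_j} = A_{_h} A_{_j}^* = 0$ for $h\ne j$, combined with $A_{_h}^* = -A_{_h}$, give $A_{_h} A_{_j} = 0$ for all $h\ne j$. I would then note that $P_{_j} := A_{_j}^* A_{_j} = -A_{_j}^2$ is the orthogonal projection onto $(\operatorname{im} A_{_j})^{\!*}$-type subspace (indeed $P_{_j}^2 = A_{_j}^* A_{_j} A_{_j}^* A_{_j} = A_{_j}^* A_{_j} = P_{_j}$ using the SVD relation, and $P_{_j}$ is hermitian), and that these projections are mutually orthogonal: $P_{_h} P_{_j} = -A_{_h}^2(-A_{_j}^2) = A_{_h}(A_{_h}A_{_j})A_{_j} = 0$ for $h\ne j$. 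Decomposing $\mathbb{C}^n = \bigoplus_j V_{_j} \oplus V_{_0}$ where $V_{_j} = \operatorname{im} P_{_j}$ and $V_{_0}$ is the common kernel, one checks $A_{_j}$ acts as a skew-hermitian isometry (square $= -I$) on $V_{_j}$ and as $0$ on all $V_{_h}$, $h\ne j$, and on $V_{_0}$; here $\dim V_{_j} = \mu_{_j}+\nu_{_j}$. Hence $M = \sum_j \theta_{_j} A_{_j}$ is block-diagonal with respect to this decomposition, acting on $V_{_j}$ as $\theta_{_j} A_{_j}|_{V_{_j}}$, which has eigenvalues ${\bf i}\theta_{_j}$ (mult. $\mu_{_j}$) and $-{\bf i}\theta_{_j}$ (mult. $\nu_{_j}$), and acting as $0$ on $V_{_0}$.

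Finally I would argue that the eigenvalues ${\bf i}\theta_{_j}, -{\bf i}\theta_{_j}$ are pairwise distinct as $j$ varies (so that the multiplicities genuinely add up as claimed): since $\theta_{_1} > \theta_{_2} > \cdots > \theta_{_p}$ are real, the numbers $\pm{\bf i}\theta_{_j}$ are distinct unless some $\theta_{_j} = 0$, in which case only one $\theta_{_j}$ can vanish and it simply contributes extra multiplicity to the eigenvalue $0$; one must be slightly careful in phrasing the statement so that this degenerate case is absorbed into ``multiplicity $n-\sum(\mu_{_j}+\nu_{_j})$'' of the eigenvalue $0$ — but note that if $\theta_{_j}=0$ then $\theta_{_j}A_{_j}=0$ contributes nothing to $M$ anyway, and $\dim V_{_j}$ worth of zero-eigenvectors. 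The bound $\sum_j(\mu_{_j}+\nu_{_j})\ge p$ follows from each $\mu_{_j}+\nu_{_j}\ge 1$, and $n-\sum_j(\mu_{_j}+\nu_{_j})=\dim V_{_0}\ge 0$.

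The main obstacle I anticipate is purely bookkeeping: verifying cleanly that the subspaces $V_{_j}$ are $A_{_h}$-invariant for all $h$ (including $h\ne j$, where $A_{_h}$ must vanish on $V_{_j}$) and that they span $\mathbb{C}^n$ together with $V_{_0}$ — essentially checking that $\sum_j P_{_j}$ is a projection with the expected rank. This all reduces to the identities $A_{_h}A_{_j}=0$ and $A_{_j}^3=-A_{_j}$, so there is no real difficulty, just care in assembling the block decomposition and in handling the conventions (multiplicity zero, the point $\mathcal{Q}$) so the statement reads correctly in all edge cases.
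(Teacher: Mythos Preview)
Your proposal is correct and follows essentially the same approach as the paper. The paper also derives $A_{_j}^3=-A_{_j}$ for part (a) and $A_{_h}A_{_j}=0$ for $h\ne j$ for part (b); the only cosmetic difference is that the paper then invokes simultaneous unitary diagonalization of the commuting normal matrices $A_{_1},\dots,A_{_p}$ to read off the eigenvalues of $M$, whereas you build the same orthogonal decomposition $\mathbb{C}^n=\bigoplus_j V_{_j}\oplus V_{_0}$ explicitly via the projections $P_{_j}=-A_{_j}^2$.
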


\begin{proof}
Since $A_{_1}, \cdots , A_{_p}$ is an SVD-system of skew-hermitian matrices, each matrix $A_{_j}$ satisfies the matrix equation $X^3 + X =0$. This allows to obtain (a).

We have $A_{_h} A_{_j} =-A_{_h} A_{_j}^*=0$, for every $h \ne j$; these conditions imply that, if $v$ is an eigenvector of $A_{_j}$ associated with the eigenvalue $\bf i$ or $-\bf i$, then $A_{_h}v=0$, for every $j \ne h$. Moreover the same conditions give, in particular, that the matrices $A_{_h}$ and $A_{_j}$ commute, hence $A_{_1}, \cdots , A_{_p}$ are simultaneously diagonalizable (together with $M$) by means of a unitary matrix (see for instance \cite[Thm.\,2.5.5 p.\,135]{HoJ2013}).
Using a common (orthonormal) basis of eigenvectors, we easily obtain (b).
\end{proof}

\begin{lemma}\label{exp-svd}
Let $A_{_1}, A_{_2}, \cdots , A_{_p}$ be an SVD-system of skew-hermitian matrices of order $n$ \ and let \ $\alpha_{_1}, \alpha_{_2}, \cdots, \alpha_{_p}$ be complex numbers. Then
\begin{center}
$
\exp(\sum\limits_{j=1}^p \alpha_{_j} A_{_j}) = I_{_n} + \sum\limits_{j=1}^p \big[\sin (\alpha_{_j}) A_{_j} +(1-\cos (\alpha_{_j})) A_{_j}^2\big].
$
\end{center}

\end{lemma}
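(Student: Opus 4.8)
The plan is to exploit the structural identities of an SVD-system to reduce the matrix exponential to a polynomial in each component. First I would record the two crucial algebraic facts: since each $A_{_j}$ is skew-hermitian and satisfies $A_{_j}A_{_j}^*A_{_j}=A_{_j}$, we get $-A_{_j}^3 = A_{_j}$, that is $A_{_j}^3 = -A_{_j}$; and since $A_{_h}^*A_{_j}=A_{_h}A_{_j}^*=0$ for $h\neq j$, skew-hermiticity gives $A_{_h}A_{_j}=0$ for all $h\neq j$. In particular the $A_{_j}$ pairwise commute and annihilate each other's products, so in the expansion $\exp\bigl(\sum_j \alpha_{_j}A_{_j}\bigr) = \sum_{k\ge 0}\frac{1}{k!}\bigl(\sum_j \alpha_{_j}A_{_j}\bigr)^k$ every mixed monomial of positive degree vanishes; hence
\begin{center}
$\exp\Bigl(\sum\limits_{j=1}^p \alpha_{_j}A_{_j}\Bigr) = I_{_n} + \sum\limits_{j=1}^p\Bigl(\exp(\alpha_{_j}A_{_j}) - I_{_n}\Bigr).$
\end{center}

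Next I would evaluate each single-variable exponential $\exp(\alpha A)$ for $A$ a skew-hermitian SVD-component (so $A^3=-A$, equivalently $A^2$ is the negative of the orthogonal projection onto $(\ker A)^\perp$). From $A^3=-A$ one gets $A^{2k+1}=(-1)^k A$ and $A^{2k+2}=(-1)^k A^2$ for $k\ge 0$, so summing the series,
\begin{center}
$\exp(\alpha A) = I_{_n} + \Bigl(\sum\limits_{k\ge 0}\frac{(-1)^k\alpha^{2k+1}}{(2k+1)!}\Bigr)A + \Bigl(\sum\limits_{k\ge 0}\frac{(-1)^k\alpha^{2k+2}}{(2k+2)!}\Bigr)A^2 = I_{_n} + \sin(\alpha)A + (1-\cos(\alpha))A^2,$
\end{center}
using $\sum_{k\ge0}\frac{(-1)^k\alpha^{2k+1}}{(2k+1)!}=\sin\alpha$ and $\sum_{k\ge0}\frac{(-1)^k\alpha^{2k+2}}{(2k+2)!}=1-\cos\alpha$ (valid for complex $\alpha$ by absolute convergence). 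Substituting into the displayed formula of the previous paragraph yields exactly
\begin{center}
$\exp\Bigl(\sum\limits_{j=1}^p \alpha_{_j}A_{_j}\Bigr) = I_{_n} + \sum\limits_{j=1}^p\bigl[\sin(\alpha_{_j})A_{_j} + (1-\cos(\alpha_{_j}))A_{_j}^2\bigr],$
\end{center}
which is the claim.

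Alternatively — and this is the cleanest route to make the reduction rigorous rather than hand-waving about "mixed monomials" — I would invoke Lemma \ref{autov-SVD-A} and its proof: the $A_{_j}$ are simultaneously unitarily diagonalizable, so one may assume each $A_{_j}$ is diagonal with entries in $\{{\bf i},-{\bf i},0\}$ and with mutually disjoint supports (the orthogonality $A_{_h}A_{_j}=0$ forces the nonzero entries of distinct $A_{_j}$ to occupy disjoint coordinates). Then $\sum_j\alpha_{_j}A_{_j}$ is diagonal, $\exp$ acts entrywise, and one checks the identity coordinate by coordinate: a coordinate where $A_{_j}$ has entry $\pm{\bf i}$ contributes $e^{\pm{\bf i}\alpha_{_j}}$, while $I_{_n}+\sin(\alpha_{_j})(\pm{\bf i})+(1-\cos(\alpha_{_j}))(\pm{\bf i})^2 = \cos(\alpha_{_j})\pm{\bf i}\sin(\alpha_{_j}) = e^{\pm{\bf i}\alpha_{_j}}$, and a coordinate in no support gives $1$ on both sides. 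I do not expect any real obstacle here; the only point requiring a line of care is justifying that the cross terms vanish (equivalently, that the supports are disjoint), and that follows immediately from $A_{_h}A_{_j}=0$ for $h\neq j$, which itself comes from combining skew-hermiticity with the SVD-system relations.
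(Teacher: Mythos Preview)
Your proposal is correct and follows essentially the same approach as the paper: both derive $A_{_h}A_{_j}=0$ for $h\neq j$ and $A_{_j}^3=-A_{_j}$ from skew-hermiticity plus the SVD-system relations, then reduce to the single-factor computation $\exp(\alpha A)=I_{_n}+\sin(\alpha)A+(1-\cos(\alpha))A^2$. The only cosmetic difference is that the paper writes $\exp(\sum_j \alpha_{_j}A_{_j})=\prod_j\exp(\alpha_{_j}A_{_j})$ by commutativity and then multiplies out the product using $A_{_h}A_{_j}=0$, whereas you kill the mixed monomials directly in the power series to reach $I_{_n}+\sum_j(\exp(\alpha_{_j}A_{_j})-I_{_n})$; these are two phrasings of the same cancellation.
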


\begin{proof}
Since $A_{_1}, A_{_2}, \cdots , A_{_p}$ are skew-hermitian, as in the proof of Lemma \ref{autov-SVD-A}, the properties of being an SVD-system give: $A_{_h} A_{_j} =0$, for $h \ne j$ (so $A_{_h}$ and $A_{_j}$ commute), and $A_{_j}^3 = -A_{_j}$, for every $j$. Hence $(\alpha_{_j} A_{_j})^{2k-1} = (-1)^{k-1} \alpha_{_j}^{2k-1} A_{_j}$ and $(\alpha_{_j} A_{_j})^{2k} = (-1)^{k-1} \alpha_{_j}^{2k} A_{_j}^2$, for every $j =1 , \cdots ,p$ \ and for every $k \ge 1$. 
Therefore: \ \  
$\exp(\sum\limits_{j=1}^p \alpha_{_j} A_{_j}) = \prod\limits_{j=1}^p \exp(\alpha_{_j} A_{_j})=
 \prod\limits_{j=1}^p \big[I_{_n} + \sin (\alpha_{_j}) A_{_j} + (1-\cos (\alpha_{_j}))  A_{_j}^2\big] = I_{_n} + \sum\limits_{j=1}^p \big[\sin (\alpha_{_j}) A_{_j} +(1-\cos (\alpha_{_j})) A_{_j}^2\big] $.
\end{proof}

\begin{rem}\label{Rodrigues}
Lemma \ref{exp-svd} gives one of the possible generalizations of the classical Rodrigues' formula (see \cite[Thm.\,2.2]{GaXu2002} and \cite[Ex.\,4.11]{DoPe2018b}).  
Note also that, from this Lemma, we obtain \ $\exp(\alpha \Omega) = E_{_\alpha}$, \ for every $\alpha \in \mathbb{R}$.
\end{rem}

\section{SVD-closed real Lie subalgebras of  $\mathfrak{gl}_n (\mathbb{C})$}\label{SVD-closed}

\begin{remdef}\label{intersec-SVD}
We say that a real Lie subalgebra $\mathfrak{g}$ of $\mathfrak{gl}_n (\mathbb{C})$ is \emph{SVD-closed} if all SVD-components of every matrix of \ $\mathfrak{g} \setminus \{0\}$ \ belong to \ $\mathfrak{g}$.

Note that any intersection of SVD-closed real Lie subalgebras of $\mathfrak{gl}_n (\mathbb{C})$ is an SVD-closed real Lie subalgebra of $\mathfrak{gl}_n (\mathbb{C})$.
\end{remdef}

\begin{nota}\label{gruppoF} We denote by $\mathfrak{A}_{_n}$ the group, whose elements are the automorphisms $f$ of the real Lie algebra $\mathfrak{gl}_n (\mathbb{C})$, such that

i) $f \circ \eta = \eta \circ f$ \ \ \  (i.e. $f(A^*)=f(A)^*$, \ \ for every $A \in \mathfrak{gl}_n (\mathbb{C})$);

ii) $f(ABA) = f(A) f(B) f(A)$, \ \ for every $A, B \in \mathfrak{gl}_n (\mathbb{C})$ \ \ \ (i.e. $f$ preserves the so-called \emph{Jordan triple product}).
\end{nota}

\begin{lemma}\label{tipi-Fn}
The elements of $\mathfrak{A}_{_n}$ are precisely the following maps: 

(1) $X \mapsto Ad_{_V}(X)= VXV^*$,  \ \ \ \ (2) $X \mapsto \big( Ad_{_V} \circ \mu \big) (X) = V \overline{X} V^*$, 

(3) $X \mapsto \big( Ad_{_V} \circ (-\tau) \big) (X) = - VX^T V^*$, \  \ \ \ (4) $X \mapsto \big( Ad_{_V} \circ (-\eta) \big)(X) = - VX^* V^*$,

for every \ $V \in U_n$.
\end{lemma}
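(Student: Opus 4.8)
\textbf{Proof plan for Lemma \ref{tipi-Fn}.}

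The plan is to show that the four listed families are contained in $\mathfrak{A}_{_n}$ (the easy direction) and then that every $f \in \mathfrak{A}_{_n}$ has one of these four forms (the substantial direction). For the first direction I would simply verify conditions (i) and (ii) of Notation \ref{gruppoF} for each of the maps (1)--(4). Since $Ad_{_V}$, $\mu$, $-\tau$, $-\eta$ are all automorphisms of the real Lie algebra $\mathfrak{gl}_n(\mathbb{C})$ (recalled in Notation \ref{notazioni}(d)), their composites are too; commutation with $\eta$ follows because $Ad_{_V}(X)^* = VX^*V^*$ for $V \in U_n$ and because $\mu$, $-\tau$, $-\eta$ each commute with $\eta$; preservation of the Jordan triple product $ABA$ holds for $Ad_{_V}$ by multiplicativity and for $\mu,-\tau,-\eta$ by direct inspection (e.g. $(ABA)^T = A^T B^T A^T$ and $(-\tau)$ of $ABA$ is $-A^TB^TA^T = (-A^T)(-B^T)(-A^T)\cdot(-1)$... here one must be slightly careful with signs, but $(-1)^3 = -1$ cancels correctly against the single outer sign, so $-\tau$ does preserve $ABA$).

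For the converse, the strategy is to exploit condition (ii) with $B = I_{_n}$, which gives $f(A^2) = f(A)f(I)f(A)$, together with the behaviour of $f$ on the special elements. First I would show $f(I_{_n}) = I_{_n}$: setting $A = B = I$ gives $f(I) = f(I)^3$, and combined with $f(I)$ being invertible (an automorphism is bijective and $f$ of an idempotent-type relation pins it down), one forces $f(I)^2 = I$; using $f \circ \eta = \eta \circ f$ and the fact that $f(I)$ is a square ($f(I) = f(I\cdot I \cdot I)$ after normalizing) one gets $f(I) = I_{_n}$. Then (ii) becomes $f(A^2) = f(A)^2$, i.e. $f$ is a Jordan algebra homomorphism for the symmetrized product, and since $f$ is also a Lie algebra automorphism, $f$ is an automorphism of the associative algebra structure up to the anti-automorphism ambiguity: the classical fact that a map respecting both the commutator and the anticommutator (equivalently $A\mapsto f(A^2)=f(A)^2$ plus linearity over $\mathbb{R}$) is either an algebra homomorphism or an algebra anti-homomorphism of $\mathfrak{gl}_n(\mathbb{C})$ over $\mathbb{R}$. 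The key extra input is that $f$ must be $\mathbb{C}$-linear or $\mathbb{C}$-antilinear: applying $f$ to $({\bf i}I)^2 = -I$ and using $f(I)=I$ gives $f({\bf i}I)^2 = -I$, so $f({\bf i}I) = \pm {\bf i}I$ (it is a scalar matrix commuting with the image of everything), and these two cases produce the $\mathbb{C}$-linear and $\mathbb{C}$-antilinear alternatives.

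At this point I would invoke (or reprove in a line) the standard structure theorem: every $\mathbb{R}$-linear ring homomorphism $\mathfrak{gl}_n(\mathbb{C}) \to \mathfrak{gl}_n(\mathbb{C})$ that is $\mathbb{C}$-linear is of the form $X \mapsto VXV^{-1}$ for some $V \in GL_n(\mathbb{C})$ (Skolem--Noether, or: it gives a $\mathbb{C}$-algebra automorphism of a matrix algebra, hence inner); the $\mathbb{C}$-antilinear case composes this with $\mu$; the anti-homomorphism case composes with $-\tau$ or with $-\eta$ (note $-\eta = (-\tau)\circ\mu$). So $f$ is one of $Ad_{_W}$, $Ad_{_W}\circ\mu$, $Ad_{_W}\circ(-\tau)$, $Ad_{_W}\circ(-\eta)$ for some $W \in GL_n(\mathbb{C})$. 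Finally, condition (i), $f \circ \eta = \eta \circ f$, forces $W$ to be (a scalar multiple of) a unitary matrix: in case (1), $Ad_{_W}(X^*) = (Ad_{_W} X)^*$ for all $X$ means $W^*W$ is central, i.e. $W^*W = cI$ with $c > 0$, so after rescaling $W \in U_n$ and $Ad_{_W}$ is unchanged; the other three cases are identical since $\mu$, $-\tau$, $-\eta$ all commute with $\eta$. This yields exactly the four families (1)--(4) with $V \in U_n$.

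\textbf{Main obstacle.} The delicate point is establishing that respecting the Jordan triple product (plus being a Lie automorphism) forces $f$ to be, up to the $\mu$, $-\tau$, $-\eta$ ambiguity, a $\mathbb{C}$-algebra (anti-)automorphism of $\mathfrak{gl}_n(\mathbb{C})$ — i.e. passing from the triple product $ABA$ to the ordinary product. One must first normalize $f(I_{_n}) = I_{_n}$ (which itself needs condition (i) to rule out $f(I)=-I$ or non-real scalars), then recover the bilinear product from the quadratic map $A \mapsto f(A^2) = f(A)^2$ by polarization, and carefully track that $f$ sends $\mathbb{C}$-scalars to $\mathbb{C}$-scalars with only the two sign choices $\pm{\bf i}$. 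Once the product is recovered, Skolem--Noether finishes quickly; the genuine work is this reduction step.
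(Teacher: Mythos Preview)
Your approach is genuinely different from the paper's: you aim to show directly that $f$ is a ring (anti-)automorphism of $\mathfrak{gl}_n(\mathbb{C})$ and then invoke Skolem--Noether, whereas the paper restricts $f$ to the Hermitian part $\mathcal{H}_n$, cites a classification theorem of An--Hou for Jordan-triple-product-preserving maps on $\mathcal{H}_n$ to pin down $f|_{\mathcal{H}_n}$ up to four possibilities, and then reconstructs $f|_{\mathfrak{u}_n}$ from $f|_{\mathcal{H}_n}$ via the element ${\bf i}I_n$. Your route is more self-contained (it avoids the external citation), but as written it contains a genuine error.

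The error is the claim that $f(I_n) = I_n$. From $f(I) = f(I)^3$ together with centrality (since $f$ is a Lie automorphism, $f(I)$ commutes with the whole image, hence is scalar) you get only $f(I_n) = \pm I_n$; condition (i) merely says $f(I_n)$ is Hermitian and does not exclude $-I_n$. And $-I_n$ does occur: for $f = -\tau$ or $f = -\eta$ (your target cases (3) and (4)) one has $f(I_n) = -I_n$. This mistake propagates. If $f(I_n) = I_n$, then (ii) with $B = I$ gives $f(A^2) = f(A)^2$; polarizing and adding the Lie-bracket identity forces $f(AB) = f(A)f(B)$, so $f$ is \emph{multiplicative}, never anti-multiplicative. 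Thus your argument, under the assumption $f(I_n)=I_n$, can only reach cases (1) and (2), and your later appeal to an ``anti-homomorphism case'' is inconsistent with what precedes it.

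The fix is straightforward: split on the sign of $f(I_n)$ from the start. In the case $f(I_n) = -I_n$ one gets $f(A^2) = -f(A)^2$, and the same polarize-plus-Lie computation yields $f(AB) = -f(B)f(A)$; composing $f$ with $-\tau$ (which lies in $\mathfrak{A}_n$) then lands you back in the multiplicative case with $(-\tau\circ f)(I_n)=I_n$. After this correction, the remainder of your plan --- showing $f({\bf i}I_n) = \pm {\bf i}I_n$ to get $\mathbb{C}$-linearity or $\mathbb{C}$-antilinearity, applying Skolem--Noether to obtain $Ad_{_W}$, and using condition (i) to force $W^*W$ central, hence $W$ a scalar multiple of a unitary --- goes through and gives exactly the four families.
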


\begin{proof}
It is easy to check that the previous maps are elements of $\mathfrak{A}_{_n}$.

For the converse, consider the decomposition $\mathfrak{gl}_n (\mathbb{C}) = \mathcal{H}_n \oplus \mathfrak{u}_n$, where $\mathcal{H}_n$ is the real vector subspace of $\mathfrak{gl}_n (\mathbb{C})$ of hermitian matrices, so that every matrix $Z \in \mathfrak{gl}_n (\mathbb{C})$ can be uniquely written as $Z = \dfrac{Z + Z^*}{2} + \dfrac{Z-Z^*}{2}$,  with $\dfrac{Z + Z^*}{2} \in \mathcal{H}_n$ and $\dfrac{Z-Z^*}{2} \in \mathfrak{u}_n$; let $f \in \mathfrak{A}_{_n}$ and denote by $f_{_1}$ and by $f_{_2}$ the restrictions of $f$ to $\mathcal{H}_n$ and to $\mathfrak{u}_n$, respectively. Since $f \circ \eta = \eta \circ f$, we have $f_{_1} (\mathcal{H}_n) = \mathcal{H}_n$ and  $f_{_2} (\mathfrak{u}_n) = \mathfrak{u}_n$. By \cite[Thm.\,2.1]{AH2006}, there exists a unitary matrix $V \in U_n$ such that we have 

either \ \ $f_{_1} =  Ad_{_V}$ \ \  or \ \  
$f_{_1} =  - Ad_{_V}$ \ \  or \ \   
$f_{_1} =   Ad_{_V} \circ \mu$ \ \ or \ \  
$f_{_1} =   - Ad_{_V} \circ \mu$.

In particular, this implies $f(I_{_n}) = \pm I_{_n}$.

Now we denote $\mathcal{M}:= {\bf i} I_{_n}$  and $\mathcal{N} := I_{_n} - \mathcal{M} =(1-{\bf i}) I_{_n}$, so that $\mathcal{N} Y \mathcal{N} = - 2 {\bf i} Y$, for every $Y \in \mathfrak{gl}_n (\mathbb{C})$.  
Since  $f$ is an automorphism of the Lie algebra $\mathfrak{gl}_n (\mathbb{C})$ and $\mathcal{M}$ belongs to its center $\mathcal{Z}$, then also $f(\mathcal{M})$ belongs to $\mathcal{Z}$, i.e. $f(\mathcal{M}) = \lambda I_{_n}$ for some $\lambda \in \mathbb{C}$. 
Since $f$ preserves the Jordan triple product, we get: $-f(I_{_n}) = f(\mathcal{M} I_{_n}\mathcal{M}) = \lambda^2 f(I_{_n})$. Hence $\lambda = \pm {\bf i}$, so that $f(\mathcal{N})= f(I_{_n})- f(\mathcal{M}) = (\varepsilon_{_1}  + \varepsilon_{_2} {\bf i} ) I_{_n}$, where $\varepsilon_{_1}, \varepsilon_{_2} = \pm 1$; from this we get $f(\mathcal{N})^2=  2 \varepsilon {\bf i}I_{_n}$, where $\varepsilon = \pm 1$. Fixed $Y \in \mathfrak{u}_n$, we have $({\bf i} Y)^* = {\bf i} Y$  and, so, $\mathcal{N} Y \mathcal{N} = - 2 {\bf i} Y \in \mathcal{H}_n$. Remembering that $f$ preserves the Jordan triple product, we get $-2f_{_1}({\bf i} Y) = f_{_1}(\mathcal{N} Y \mathcal{N}) = f(\mathcal{N}) f_{_2}( Y) f(\mathcal{N}) =  2 \varepsilon {\bf i}f_{_2}( Y)$ and this gives $f_{_2}(Y) = \varepsilon {\bf i} f_{_1} ({\bf i} Y)$. 
This last equality implies that $f(Z) = \dfrac{1}{2} \big[ f_{_1}(Z+Z^*) + \varepsilon {\bf i}  f_{_1}({\bf i}Z-{\bf i}Z^*) \big]$, for every $Z \in \mathfrak{gl}_n (\mathbb{C})$. Taking into account the four possible expressions for $f_{_1}$ (and the fact that $\varepsilon = \pm 1$),  easy computations allow to obtain the following eight possible expressions for $f$:

$\pm Ad_{_V}$,  \ \ \ \ $\pm Ad_{_V} \circ \mu$,
\ \ \ \ $\pm Ad_{_V} \circ \eta$,  \ \ \ \ $\pm Ad_{_V} \circ \tau$.

But \ $- Ad_{_V}$,  \ $-  Ad_{_V} \circ \mu$, \ $Ad_{_V} \circ \eta$, \ $ Ad_{_V} \circ \tau$ \ are  not automorphisms of the real Lie algebra $\mathfrak{gl}_n (\mathbb{C})$, while the remaining four are the expressions for $f$ in the statement.
\end{proof}

\begin{rem}\label{comm-anticomm}
If $f \in \mathfrak{A}_{_n}$, then either $f(XY) = f(X)f(Y)$ for every $X, Y \in \mathfrak{gl}_n (\mathbb{C})$ (in the cases (1) and (2) of Lemma \ref{tipi-Fn}) or  $f(XY) = -f(Y)f(X)$ for every $X, Y \in \mathfrak{gl}_n (\mathbb{C})$ (in the remaining cases (3) and (4)).
\end{rem}

\begin{prop}\label{Fix-SVD-inv}
For every $f \in \mathfrak{A}_{_n}$, \  
the set $Fix(f) := \{M \in  \mathfrak{gl}_n (\mathbb{C}) : f(M) = M \}$ 
is an SVD-closed real Lie subalgebra of $\mathfrak{gl}_n (\mathbb{C})$.
\end{prop}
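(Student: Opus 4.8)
The plan is to verify the two requirements separately: that $Fix(f)$ is a real Lie subalgebra, and that it is SVD-closed. The first part is essentially formal. Since $f$ is an automorphism of the real Lie algebra $\mathfrak{gl}_n(\mathbb{C})$, it is $\mathbb{R}$-linear and satisfies $f([X,Y])=[f(X),f(Y)]$; hence $Fix(f)$, being the kernel of the $\mathbb{R}$-linear map $f-\mathrm{id}$, is a real vector subspace, and if $X,Y\in Fix(f)$ then $f([X,Y])=[f(X),f(Y)]=[X,Y]$, so $[X,Y]\in Fix(f)$. This disposes of the algebra structure with no real work.

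The substance is SVD-closure. Let $M\in Fix(f)\setminus\{0\}$ with SVD-decomposition $M=\sum_{j=1}^p\sigma_j A_j$, where $\sigma_1>\cdots>\sigma_p>0$ and $A_1,\dots,A_p$ form an SVD-system. I want to show each $A_j\in Fix(f)$. The idea is that $f$ preserves the defining algebraic relations of an SVD-system, so $f(A_1),\dots,f(A_p)$ is again an SVD-system, and $f$ fixes the coefficients $\sigma_j$; then uniqueness of the SVD-decomposition (Remarks-Definitions \ref{SVD-rec}) forces $f(A_j)=A_j$. Concretely: by Lemma \ref{tipi-Fn}, $f$ has one of the four forms $Ad_V$, $Ad_V\circ\mu$, $Ad_V\circ(-\tau)$, $Ad_V\circ(-\eta)$ with $V\in U_n$; in each case $f$ commutes with $\eta$ (condition (i) of Notation \ref{gruppoF}), and by Remark \ref{comm-anticomm} $f$ is either multiplicative ($f(XY)=f(X)f(Y)$) or anti-multiplicative ($f(XY)=-f(Y)f(X)$). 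In the multiplicative case, from $A_h^*A_j=0$ we get $f(A_h^*)f(A_j)=f(A_h)^*f(A_j)=f(A_h^*A_j)=0$, and similarly $f(A_h)f(A_j)^*=0$ for $h\ne j$, and $f(A_j)f(A_j)^*f(A_j)=f(A_jA_j^*A_j)=f(A_j)$; also each $f(A_j)\ne 0$ since $f$ is bijective. In the anti-multiplicative case one gets the same conclusions with the order of factors reversed, but the SVD-system relations $A_h^*A_j=A_hA_j^*=0$ and $A_jA_j^*A_j=A_j$ are invariant under reversing products together with applying $\eta$ — e.g. $f(A_h^*A_j)=-f(A_j)f(A_h^*)=-f(A_j)f(A_h)^*$, and likewise taking adjoints and using $A_h A_j^*=0\Rightarrow A_j A_h^*=0$ — so after a short bookkeeping check the family $f(A_1),\dots,f(A_p)$ is still an SVD-system in both cases.

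Then $f(M)=M$ reads $\sum_j\sigma_j f(A_j)=\sum_j\sigma_j A_j$, i.e. $\sum_j\sigma_j f(A_j)$ is an SVD-decomposition of $M$ (the $\sigma_j$ are still strictly decreasing and positive, and $f(A_1),\dots,f(A_p)$ is an SVD-system), so by the uniqueness statement in Remarks-Definitions \ref{SVD-rec} we conclude $f(A_j)=A_j$ for every $j$, that is, $A_j\in Fix(f)$.

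The only delicate point — and the one I'd expect to be the main obstacle — is the anti-multiplicative case: one must be careful that reversing the order of products does not spoil the relations, and that applying $\eta$ (using $f\circ\eta=\eta\circ f$) correctly interchanges the roles needed. A clean way to handle it uniformly is to observe that $f$ maps SVD-systems to SVD-systems because the conditions ``$A_h^*A_j=A_hA_j^*=0$ for all $h\ne j$ and $A_jA_j^*A_j=A_j$'' are exactly the statement that the matrices $A_j$ are the SVD-components of $\sum_j\sigma_j A_j$, a property visibly preserved by each of the four explicit maps of Lemma \ref{tipi-Fn} (conjugation by a unitary, complex conjugation, negative transpose, negative adjoint all send SVD-systems to SVD-systems, since singular values are unchanged and the component relations transform into themselves). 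Invoking Lemma \ref{tipi-Fn} to reduce to these four concrete maps makes the verification entirely routine and sidesteps the abstract product-reversal bookkeeping.
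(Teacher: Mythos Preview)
Your argument is correct and follows essentially the same route as the paper: show $Fix(f)$ is a Lie subalgebra by linearity, then use that $f$ sends the SVD-system $A_1,\dots,A_p$ to another SVD-system and invoke uniqueness of the SVD-decomposition to conclude $f(A_j)=A_j$. The only cosmetic difference is that the paper handles the triple relation $A_jA_j^*A_j=A_j$ directly via the Jordan-triple-product axiom (ii) of Notation~\ref{gruppoF} (which gives $f(A_jA_j^*A_j)=f(A_j)f(A_j)^*f(A_j)$ in one line, without splitting into multiplicative/anti-multiplicative cases), whereas you reach the same conclusion through Remark~\ref{comm-anticomm}; your final suggestion to invoke the explicit forms of Lemma~\ref{tipi-Fn} is a valid shortcut but not needed.
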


\begin{proof}
Choose an element $f$ of  $\mathfrak{A}_{_n}$;  $Fix(f)$ is a real Lie subalgebra of $\mathfrak{gl}_n (\mathbb{C})$, since $f$ is an automorphism of the real Lie algebra $\mathfrak{gl}_n (\mathbb{C})$. Hence it suffices to prove that $Fix(f)$ is SVD-closed.
Let $M = \sum\limits_{i=1}^p \sigma_{_i} A_{_i}$ be a matrix of $Fix(f) \setminus \lbrace0\rbrace$, with its SVD-decomposition; since $f$ is $\mathbb{R}$-linear, we have 
$M=f(M) = \sum\limits_{i=1}^p \sigma_{_i} f(A_{_i})$.
By conditions (i), (ii) of Notation \ref{gruppoF}, we have $f(A_{_i}) f(A_{_i})^* f(A_{_i})= f(A_{_i} A_{_i}^* A_{_i}) = f(A_{_i})$, for $i= 1, \cdots , p$. Furthermore, by Remark \ref{comm-anticomm}, $f(A_{_i}) f(A_{_j})^*$ equals either $f(A_{_i} A_{_j}^*)$ or  $- f(A_{_j}^* A_{_i})$ and, in both cases, $f(A_{_i}) f(A_{_j})^* =0$, if $i\ne j$. Similarly, we get $f(A_{_i})^* f(A_{_j})=0$, if $i \ne j$.
Hence $\sum\limits_{i=1}^p \sigma_{_i} f(A_{_i})$ is another SVD-decomposition of $M$; by uniqueness, we get $f(A_{_i})=A_{_i}$, so   every $A_{_i} \in Fix(f)$.
\end{proof}

\begin{examples}\label{esempi}
From Proposition \ref{Fix-SVD-inv} and from Lemma \ref{tipi-Fn}, we obtain that, \ for every $V \in U_n$,  the following are SVD-closed real Lie subalgebras of $\mathfrak{gl}_n (\mathbb{C})$:

$Fix\big(Ad_{_V}\big) = \langle  V  \rangle_{_{\mathfrak{g\!l}_n\!(\mathbb{C})}}$; \ \ \ \ \ \ \ \ $Fix\big(Ad_{_V}\circ \mu\big) =  \preccurlyeq V \succcurlyeq_{_{\mathfrak{g\!l}_n\!(\mathbb{C})}}$;
\ \ \ \ \ \ \ \ $ Fix\big(Ad_{_V}\circ (-\tau)\big)$;

$ Fix\big(Ad_{_V}\circ (-\eta)\big)$ \ \ \ \ \ (note that, if \ $V=I_{_n}$,  we have \ $ Fix(-\eta) = \mathfrak{u}_n $).

Taking into account Remark-Definition \ref{intersec-SVD}, we obtain that

$\langle  V  \rangle_{_{\mathfrak{g}}} = \langle  V \rangle_{_{\mathfrak{g\!l}_n\!(\mathbb{C})}} \cap \ \mathfrak{g}$\ \ \ \ \ \ \ \ \ \ \ and \ \ \ \ \ \ \ \ \ 
$\preccurlyeq V \succcurlyeq_{_{\mathfrak{g}}} \ = \ \preccurlyeq  V \succcurlyeq_{_{\mathfrak{g\!l}_n\!(\mathbb{C})}} \cap \ \mathfrak{g}$

are SVD-closed real Lie subalgebras of $\mathfrak{g}$,
for every $V \in U_n$, and for every SVD-closed real Lie subalgebra $\mathfrak{g}$ of $\mathfrak{gl}_n (\mathbb{C})$.
\ \ In particular, \ for \ $\mathfrak{g} = \mathfrak{u_n}$, \ we deduce that

$Fix\big(Ad_{_V}\circ (-\eta)\big) \cap \mathfrak{u}_n =Fix\big(Ad_{_V}\big) \cap \mathfrak{u}_n =  \langle  V  \rangle_{_{\mathfrak{u}_n}}$ \ \ and

$ Fix\big(Ad_{_V}\circ (-\tau)\big)  \cap \mathfrak{u}_n  = Fix\big(Ad_{_V}\circ \mu \big)  \cap \mathfrak{u}_n = \preccurlyeq   V \succcurlyeq_{_{\mathfrak{u}_n}} $ 

are SVD-closed Lie subalgebras of $\mathfrak{u}_n$, for every $V \in U_n$.

Other particular SVD-closed real Lie subalgebras of $\mathfrak{gl}_n (\mathbb{C})$ are the following:

 $\mathfrak\mathfrak{gl}_n (\mathbb{R}) = Fix(\mu)$; \ \ \ \ \ \ \ \ \ \ $\mathfrak{so}_n (\mathbb{C}) = Fix(-\tau)$;
\ \ \ \ \ \ \ \ \ \ $\mathfrak{so}_n  = \mathfrak{u}_n \cap \mathfrak\mathfrak{gl}_n (\mathbb{R})$;

$\mathfrak{sp}_{2n} (\mathbb{C}) = Fix\big(Ad_{_{\Omega_{_n}}}\!\circ (-\tau)\big)$; \ \ \ \ \
$\mathfrak{sp}_n =  \mathfrak{sp}_{2n} (\mathbb{C}) \cap \mathfrak{u}_{2n}$;
\ \ \ \  \ $\mathfrak{su}_2 = \mathfrak{sp}_2 (\mathbb{C}) \cap \mathfrak{u}_2$;

$\mathfrak{sp}_{2n} (\mathbb{R}) = \mathfrak{sp}_{2n} (\mathbb{C})  \cap \mathfrak\mathfrak{gl}_n (\mathbb{R})$; \ \ \ \ \ \ \ \ \ \ \ \ \ \ \ \ \ 
$\mathfrak{u}_{(p, q)} = Fix\big(Ad_{_{J^{(p,q)}}}\! \circ (-\eta)\big)$;

$\mathfrak{so}_{(p,q)} (\mathbb{C}) = Fix\big(Ad_{_{J^{(p,q)}}} \circ (-\tau)\big)$; \ \ \ \ \ \ \ \ \ \ \ \ \ \ 
$\mathfrak{so}_{(p,q)}= \mathfrak{so}_{(p,q)} (\mathbb{C}) \cap \mathfrak{gl}_{(p+q)} (\mathbb{R})$.
\end{examples}

\begin{rem}\label{controesempi}
If $n \ge 3$, the following are not SVD-closed real Lie subalgebras of $\mathfrak{gl}_n (\mathbb{C})$: 

$\mathfrak{su}_n$,  \ \ \ \ \ \ $\mathfrak{sl}_n (\mathbb{C}) = \{M \in \mathfrak{gl}_n (\mathbb{C}) : tr(M) =0\}$, \ \ \ \ \ \ $\mathfrak{sl}_n (\mathbb{R}) := \mathfrak{sl}_n (\mathbb{C}) \cap \mathfrak\mathfrak{gl}_n (\mathbb{R})$.

We check it only  for $\mathfrak{su}_3$; the generalization to $n > 3$ and the other cases go similarly.

The SVD-components of the matrix $D=\begin{pmatrix}
{\bf i} & 0 & 0 \\ 
0 &  {\bf i} & 0\\
0 & 0 & -2 {\bf i}
\end{pmatrix}$ are
 $\begin{pmatrix}
0 & 0 & 0 \\ 
0 &  0 & 0\\
0 & 0 & - {\bf i}
\end{pmatrix}$
and 
$\begin{pmatrix}
{\bf i} & 0 & 0 \\ 
0 &  {\bf i} & 0\\
0 & 0 & 0
\end{pmatrix}$
(being $1$ and $2$ the singular values of $D$); since $D \in \mathfrak{su}_3$, while its SVD-components do not belong to $\mathfrak{su}_3$, we can conclude that the Lie algebra $\mathfrak{su}_3$ is not SVD-closed.
\end{rem}

\begin{prop}\label{CartanSVD}
Let $\mathfrak{g}$ be an SVD-closed real Lie subalgebra of $\mathfrak{gl}_n (\mathbb{C})$.

a) For every $W \in \mathfrak{u}_n$, we have that  $\langle  W  \rangle_{_{\mathfrak{g}}}$ is an SVD-closed Lie subalgebra of $\mathfrak{g}$.

b) If $\mathfrak{g}$ is the Lie algebra of a closed subgroup of $U_n$, then every Cartan subalgebra of $\mathfrak{g}$ is SVD-closed.
\end{prop}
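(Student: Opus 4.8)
The plan is to reduce part (a) to the already established Examples and Remark-Definition \ref{intersec-SVD}, and then to deduce part (b) from (a) by a conjugation argument exploiting that Cartan subalgebras of a compact Lie algebra are all conjugate to a fixed maximal abelian subalgebra. For part (a), the key observation is that $W \in \mathfrak{u}_n$ implies $W \in U_n$ is unavailable directly, but what we really need is that the operator $\mathrm{Ad}_{\exp W}$ (or rather the condition of commuting with $W$) is an intersection of $\mathrm{SVD}$-closed pieces. Concretely, I would note that $\langle W \rangle_{\mathfrak{g}} = \langle W \rangle_{\mathfrak{gl}_n(\mathbb{C})} \cap \mathfrak{g}$, and that $\langle W \rangle_{\mathfrak{gl}_n(\mathbb{C})} = \mathrm{Fix}(\mathrm{Ad}_{V})$ where $V := \exp(W) \in U_n$ (since $XW = WX$ is equivalent to $X\exp(W) = \exp(W)X$ whenever $W$ is semisimple, which holds as $W \in \mathfrak{u}_n$ is skew-hermitian hence diagonalizable; here one must check the elementary fact that a matrix commutes with a diagonalizable matrix iff it commutes with the projections onto its eigenspaces, hence iff it commutes with $\exp$ of it provided the eigenvalues of $W$ are distinct modulo $2\pi i$ — if not, replace $W$ by $\varepsilon W$ for small irrational $\varepsilon$, which has the same commutant and whose exponential is still unitary). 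Then $\langle W \rangle_{\mathfrak{gl}_n(\mathbb{C})} = \mathrm{Fix}(\mathrm{Ad}_V)$ is $\mathrm{SVD}$-closed by Proposition \ref{Fix-SVD-inv} (as $\mathrm{Ad}_V \in \mathfrak{A}_n$ by Lemma \ref{tipi-Fn}), and intersecting with the $\mathrm{SVD}$-closed $\mathfrak{g}$ keeps it $\mathrm{SVD}$-closed by Remark-Definition \ref{intersec-SVD}.

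For part (b), let $\mathfrak{g}$ be the Lie algebra of a closed subgroup $G$ of $U_n$, so $G$ is compact and $\mathfrak{g} \subseteq \mathfrak{u}_n$. Let $\mathfrak{t} \subseteq \mathfrak{g}$ be a Cartan subalgebra. In the compact setting, $\mathfrak{t}$ is a maximal abelian subalgebra of $\mathfrak{g}$, and it coincides with its own centralizer in $\mathfrak{g}$: that is, $\mathfrak{t} = \langle W \rangle_{\mathfrak{g}}$ for a suitable \emph{regular} element $W \in \mathfrak{t} \subseteq \mathfrak{u}_n$ (a generator of the torus $\exp(\mathfrak{t})$, or any element whose centralizer in $\mathfrak{g}$ is exactly $\mathfrak{t}$; such $W$ exists because the union of the proper centralizers, each a subspace of smaller dimension, cannot cover $\mathfrak{t}$). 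Once $\mathfrak{t}$ is written in the form $\langle W \rangle_{\mathfrak{g}}$ with $W \in \mathfrak{u}_n$, part (a) applies verbatim and gives that $\mathfrak{t}$ is $\mathrm{SVD}$-closed.

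The main obstacle I anticipate is the passage ``$XW = WX \iff X\exp(W) = \exp(W)X$'': this is only true when the eigenvalues of $W$ are pairwise non-congruent modulo $2\pi i$, which a general $W \in \mathfrak{u}_n$ need not satisfy. The clean fix is to choose $W$ with this property from the start — since $\mathfrak{t}$ is abelian, its elements are simultaneously diagonalizable, and among generators of the torus one can pick $W$ whose eigenvalues $i\theta_1,\dots,i\theta_n$ have the $\theta_j$'s that appear distinct being rationally independent (equivalently, with distinct values no two differing by a multiple of $2\pi$); such $W$ is still regular with centralizer $\mathfrak{t}$. With this choice the equivalence is immediate, $V = \exp(W) \in U_n$, and the rest is as above. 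The secondary point to be careful about — that a Cartan subalgebra of a compact Lie algebra equals the centralizer of a single regular element — is standard compact Lie theory and I would simply cite it or give the one-line genericity argument above.
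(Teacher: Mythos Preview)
Your proposal is correct and follows essentially the same route as the paper: for (a), express $\langle W\rangle_{\mathfrak{g}}$ as an intersection of $\mathfrak{g}$ with sets of the form $\mathrm{Fix}(Ad_V)$ for $V\in U_n$, then invoke Proposition~\ref{Fix-SVD-inv} and Remark-Definition~\ref{intersec-SVD}; for (b), write the Cartan subalgebra as the centralizer $\langle W\rangle_{\mathfrak{g}}$ of a single regular element $W\in\mathfrak{t}\subseteq\mathfrak{u}_n$ and apply (a). Part (b) is identical in spirit to the paper, which simply cites \cite[Lemma\,5.7 p.\,100]{Sepa2007} for the regular-element fact.

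The only noteworthy difference is in the technical step of (a). You fix the failure of ``$XW=WX\iff X\exp(W)=\exp(W)X$'' by rescaling: since $\langle W\rangle=\langle\varepsilon W\rangle$ for any $\varepsilon\neq 0$, and for $\varepsilon>0$ small the distinct eigenvalues of $\varepsilon W$ lie in the strip $\{|\mathrm{Im}|<\pi\}$ and hence exponentiate injectively, one gets $\langle W\rangle_{\mathfrak{gl}_n(\mathbb{C})}=\mathrm{Fix}(Ad_{\exp(\varepsilon W)})$ for a single unitary $\exp(\varepsilon W)$. This is correct (the ``irrational'' qualifier is unnecessary). The paper sidesteps the issue more elegantly: it observes, by differentiating $Ye^{sW}=e^{sW}Y$ at $s=0$, that $YW=WY$ is equivalent to $Ye^{sW}=e^{sW}Y$ for \emph{all} $s\in\mathbb{R}$, and therefore writes
\[
\langle W\rangle_{\mathfrak{g}}=\mathfrak{g}\cap\bigcap_{s\in\mathbb{R}}\mathrm{Fix}\big(Ad_{\exp(sW)}\big),
\]
an intersection of SVD-closed subalgebras with no eigenvalue bookkeeping required. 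Your argument and the paper's arrive at the same conclusion by the same mechanism; the paper's version is just cleaner.
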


\begin{proof}
Clearly, if \ $YW=WY$ then $Ye^{sW} = e^{sW}Y$, for every $s \in \mathbb{R}$; conversely, if $Ye^{sW} = e^{sW}Y$ for every $s \in \mathbb{R}$, then,  differentiating with respect to $s$ and putting $s=0$, we get $YW=WY$. Hence $\langle W \rangle_{_{\mathfrak{g}}} = \mathfrak{g}\cap \big[ \bigcap\limits_{s \in \mathbb{R}} Fix \big (Ad_{_{\exp(sW)}} \big) \big]$. We get (a), since $\exp(sW) \in U_n$, for every $s \in \mathbb{R}$.
Part (b) follows from part (a), via \cite[Lemma\,5.7 p.\,100]{Sepa2007}.
\end{proof}

\section{SVD-closed subgroups of $U_n$}\label{SVD-gruppi}

\begin{remdef}\label{svdgroups}
We say that any subgroup of $GL_n (\mathbb{C})$ is \emph{SVD-closed} if it is closed in $GL_n (\mathbb{C})$ and its Lie algebra is an SVD-closed real Lie subalgebra of $\mathfrak{gl}_n (\mathbb{C})$. Note that, by Examples \ref{esempi} and Remarks \ref{parentesi-angolari2} (b), the subgroups of $U_n$, defined by 

$\preccurlyeq  V  \succcurlyeq_{_{U_n}} = \{X \in U_n : XV=V \overline{X} \}=\{X \in U_n : XVX^T=V \}$ and

$\langle  V  \rangle_{_{U_n }} = \{X \in U_n : XV=VX \}$, \ are SVD-closed, \ for every matrix $V \in U_n$.

By Remark-Definition \ref{intersec-SVD}, the intersection of SVD-closed subgroups of $GL_n (\mathbb{C})$ is an SVD-closed subgroup of $GL_n (\mathbb{C})$; indeed, it is known that its Lie algebra is the intersection of Lie algebras of all SVD-closed subgroups (\cite[Cor.\,3 p.\,307]{Bou1975}).
In the Sections \ref{Sect-U-n} and \ref{Sect-Q-in-O-n}, we will study the sets of generalized principal logarithms of matrices of the groups 

$\langle  V  \rangle_{_{U_n }}$, \ where $V \in U_n$,
\  and \ 
$\preccurlyeq  Q  \succcurlyeq_{_{SU_n}}\  = \  \preccurlyeq  Q  \succcurlyeq_{_{U_n}} \cap \ SU_n$, \ where $Q \in O_n$. 

Note that we can obtain some classical Lie groups as follows: 

$U_n= \langle I_{_n} \rangle_{_{U_n}}, \ \ \ \ SO_n=\ \preccurlyeq  I_{_n}\succcurlyeq_{_{SU_n}}, \ \ \ \ Sp_n= \ \preccurlyeq \Omega_{_n} \succcurlyeq_{_{SU_{2n}}}$,

$U_{(p,n-p)}\cap\ U_n= \langle J^{(p,n-p)} \rangle_{_{U_n}}, \ \ \ SO_{(p, n-p)} (\mathbb{C}) \cap U_n =\  \preccurlyeq J^{(p,n-p)} \succcurlyeq_{_{SU_n}},$ 

for $p=0, \cdots, n.$
\ We need some preliminary results.
\end{remdef}

\begin{prop}\label{UnQ}
Let $V \in U_n$; denote by $\lambda_{_1}$ (with multiplicity $n_{_1}$), $\cdots, \lambda_{_r}$ (with multiplicity $n_{_r}$) its  distinct eigenvalues, and choose $R \in U_n$ such that $V = Ad_{_R}\big(\bigoplus\limits_{j=1}^r \lambda_{_j} I_{_{n_j}}\big)$. Then
$\langle  V  \rangle_{_{U_n}} = Ad_{_R} \big( \bigoplus_{j=1}^r U_{n_{_j}} \big)$ and it is a (compact) connected SVD-closed   subgroup of $U_n$, whose Lie algebra is  $\langle  V \rangle_{_{\mathfrak{u}_n}}= Ad_{_R}\big( \bigoplus_{j=1}^r \mathfrak{u}_{n_{_j}} \big)$.
\end{prop}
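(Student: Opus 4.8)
The plan is to diagonalize $V$ and reduce everything to the obvious block description. First I would use the fact that $V\in U_n$ is unitary, hence normal, so it is unitarily diagonalizable: there exists $R\in U_n$ with $V=Ad_{_R}\bigl(\bigoplus_{j=1}^r\lambda_{_j}I_{_{n_j}}\bigr)$, where $\lambda_{_1},\dots,\lambda_{_r}$ are the distinct eigenvalues with multiplicities $n_{_1},\dots,n_{_r}$; this is exactly the hypothesis as stated. Setting $D:=\bigoplus_{j=1}^r\lambda_{_j}I_{_{n_j}}$, the conjugation identity $Ad_{_R}(\langle D\rangle_{_{U_n}})=\langle Ad_{_R}(D)\rangle_{_{Ad_{_R}(U_n)}}=\langle V\rangle_{_{U_n}}$ reduces the problem to computing $\langle D\rangle_{_{U_n}}$, since $Ad_{_R}$ is an automorphism of $U_n$ (as $R\in U_n$).

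Next I would compute $\langle D\rangle_{_{U_n}}$ directly. Writing an arbitrary $X\in U_n$ in blocks $X=(X_{_{ij}})$ consistent with the partition $n=n_{_1}+\cdots+n_{_r}$, the relation $XD=DX$ reads $\lambda_{_j}X_{_{ij}}=\lambda_{_i}X_{_{ij}}$ for all $i,j$; since the $\lambda_{_j}$ are distinct this forces $X_{_{ij}}={\bf 0}$ for $i\ne j$, so $X=\bigoplus_{j=1}^r X_{_{jj}}$ is block-diagonal. The unitarity condition $XX^*=I_{_n}$ then says precisely that each diagonal block $X_{_{jj}}$ lies in $U_{n_{_j}}$. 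Hence $\langle D\rangle_{_{U_n}}=\bigoplus_{j=1}^r U_{n_{_j}}$, and applying $Ad_{_R}$ gives $\langle V\rangle_{_{U_n}}=Ad_{_R}\bigl(\bigoplus_{j=1}^r U_{n_{_j}}\bigr)$. (Alternatively, this could be phrased via Lemma \ref{anticomm}(b) applied to the semisimple matrix $D$, whose distinct-eigenvalue blocks are the scalar blocks $\lambda_{_j}I_{_{n_j}}$, intersected with $U_n$.)

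It remains to record the stated properties. The group $\bigoplus_{j=1}^r U_{n_{_j}}$ is a direct product of the connected compact groups $U_{n_{_j}}$, hence connected and compact; conjugation by the fixed $R\in U_n$ preserves these, so $\langle V\rangle_{_{U_n}}$ is a connected compact subgroup of $U_n$, and it is closed (as a compact subset of the Hausdorff group $U_n$). It is SVD-closed by Remark-Definition \ref{svdgroups} (or by Proposition \ref{CartanSVD}(a) with $W$ chosen so that $\exp(W)$ is a scalar multiple reproducing $V$, but the cleaner reference is \ref{svdgroups}). For the Lie algebra, differentiating the defining equation $XV=VX$ at the identity shows $\langle V\rangle_{_{\mathfrak{u}_n}}=\{A\in\mathfrak{u}_n:AV=VA\}$, and the identical block computation — or simply taking the Lie algebra of each side of the group equality, using that $Ad_{_R}$ on the group differentiates to $Ad_{_R}$ on $\mathfrak{u}_n$ — yields $\langle V\rangle_{_{\mathfrak{u}_n}}=Ad_{_R}\bigl(\bigoplus_{j=1}^r\mathfrak{u}_{n_{_j}}\bigr)$, matching the Lie algebra of $\bigoplus_{j=1}^r U_{n_{_j}}$ as recorded in Remarks \ref{parentesi-angolari2}(b). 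There is no serious obstacle here; the only point requiring a moment's care is bookkeeping with the conjugation identity $Ad_{_R}(\langle M\rangle_{_{\mathcal S}})=\langle Ad_{_R}(M)\rangle_{_{Ad_{_R}(\mathcal S)}}$ and making sure it applies both at the group and at the Lie-algebra level.
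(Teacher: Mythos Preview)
Your proof is correct and follows essentially the same approach as the paper: reduce to the diagonal matrix $D$ via $Ad_{_R}$, identify the commutant as block-diagonal (the paper cites Lemma~\ref{anticomm}(b) for this, which you both carry out explicitly and mention as an alternative), and then read off compactness, connectedness, SVD-closedness (via Remark-Definition~\ref{svdgroups}), and the Lie algebra. The only superfluous remark is the parenthetical about Proposition~\ref{CartanSVD}(a), which concerns $W\in\mathfrak{u}_n$ rather than $V\in U_n$ and isn't needed here; as you note, \ref{svdgroups} is the correct reference.
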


\begin{proof}
The equality $\langle  V  \rangle_{_{U_n}} = Ad_{_R} \big( \bigoplus_{j=1}^r U_{n_{_j}} \big)$ easily follows from Lemma \ref{anticomm} (b). This implies that $\langle  V \rangle_{_{U_n}}$ is compact and connected. As noted in Remark-Definition \ref{svdgroups}, $\langle  V  \rangle_{_{U_n}}$ is SVD-closed too. Clearly, its Lie algebra is $\langle  V  \rangle_{_{\mathfrak{u}_n}} = Ad_{_R}\big( \bigoplus_{j=1}^r \mathfrak{u}_{n_{_j}} \big)$. 
\end{proof}

\begin{lemma}\label{SUn(V)}
Let $V$ any matrix of $U_n$. Then
$\preccurlyeq  V  \succcurlyeq_{_{SU_n}}$ is an SVD-closed subgroup of $U_n$, whose Lie algebra is \ $\preccurlyeq V \succcurlyeq_{_{\mathfrak{u}_n}}\ =\ \preccurlyeq V \succcurlyeq_{_{\mathfrak{s\!u}_n}}$.
\end{lemma}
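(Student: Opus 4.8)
The plan is to derive everything from the already-established facts about the larger group $\preccurlyeq V \succcurlyeq_{_{U_n}}$. By Remark-Definition \ref{svdgroups}, $\preccurlyeq V \succcurlyeq_{_{U_n}}$ is an SVD-closed (in particular, closed) subgroup of $U_n$, with Lie algebra $\preccurlyeq V \succcurlyeq_{_{\mathfrak{u}_n}}$. Since $SU_n$ is a closed subgroup of $U_n$, the intersection $\preccurlyeq V \succcurlyeq_{_{SU_n}} = \preccurlyeq V \succcurlyeq_{_{U_n}} \cap\, SU_n$ is again a closed subgroup of $GL_n(\mathbb{C})$; its Lie algebra is $\preccurlyeq V \succcurlyeq_{_{\mathfrak{u}_n}} \cap\, \mathfrak{su}_n = \preccurlyeq V \succcurlyeq_{_{\mathfrak{s\!u}_n}}$ by definition of the angular-bracket notation. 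So the content of the lemma reduces to two things: first, that this Lie algebra is SVD-closed; second — and this is the real point — that $\preccurlyeq V \succcurlyeq_{_{\mathfrak{u}_n}} = \preccurlyeq V \succcurlyeq_{_{\mathfrak{s\!u}_n}}$, i.e. every matrix $A \in \mathfrak{u}_n$ with $AV = V\overline{A}$ automatically has trace zero.

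First I would prove the trace-zero claim. Let $A \in \preccurlyeq V \succcurlyeq_{_{\mathfrak{u}_n}}$, so $A \in \mathfrak{u}_n$ and $AV = V\overline{A}$, equivalently $VAV^{-1} = V\overline{A}V^* \cdot$ — more cleanly, $A V = V \overline A$ gives $A = V \overline A V^{-1}$, hence $\operatorname{tr}(A) = \operatorname{tr}(\overline A) = \overline{\operatorname{tr}(A)}$, so $\operatorname{tr}(A)$ is real. On the other hand $A$ is skew-hermitian, so $\operatorname{tr}(A) = -\overline{\operatorname{tr}(A)}$, forcing $\operatorname{tr}(A)$ to be purely imaginary. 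A number that is both real and purely imaginary is $0$. Hence $A \in \mathfrak{su}_n$, proving $\preccurlyeq V \succcurlyeq_{_{\mathfrak{u}_n}} \subseteq \mathfrak{su}_n$; the reverse inclusion $\preccurlyeq V \succcurlyeq_{_{\mathfrak{s\!u}_n}} \subseteq \preccurlyeq V \succcurlyeq_{_{\mathfrak{u}_n}}$ is trivial, giving the asserted equality.

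Next, SVD-closedness of the Lie algebra. Since $\preccurlyeq V \succcurlyeq_{_{\mathfrak{u}_n}}$ is SVD-closed (it equals $\preccurlyeq V \succcurlyeq_{_{\mathfrak{g\!l}_n\!(\mathbb{C})}} \cap\, \mathfrak{u}_n = \operatorname{Fix}(Ad_{_V} \circ \mu) \cap \operatorname{Fix}(-\eta)$, an intersection of SVD-closed subalgebras by Examples \ref{esempi} and Remark-Definition \ref{intersec-SVD}), and since by the previous paragraph $\preccurlyeq V \succcurlyeq_{_{\mathfrak{s\!u}_n}} = \preccurlyeq V \succcurlyeq_{_{\mathfrak{u}_n}}$ as sets, the Lie algebra $\preccurlyeq V \succcurlyeq_{_{\mathfrak{s\!u}_n}}$ is SVD-closed. (One need not — and indeed cannot, see Remark \ref{controesempi} — invoke SVD-closedness of $\mathfrak{su}_n$ itself; the point is precisely that the angular-bracket condition already forces trace zero, so no separate trace condition is being imposed.) Finally, $\preccurlyeq V \succcurlyeq_{_{SU_n}}$ is a closed subgroup of $GL_n(\mathbb{C})$ whose Lie algebra is SVD-closed, hence it is an SVD-closed subgroup; and being a closed subgroup of the compact group $U_n$, it is compact. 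The only mildly delicate point is making sure the Lie algebra of the intersection is the intersection of the Lie algebras — this is the standard fact cited earlier as \cite[Cor.\,3 p.\,307]{Bou1975}, applied to $\preccurlyeq V \succcurlyeq_{_{U_n}} \cap\, SU_n$. I do not expect any real obstacle; the lemma is essentially a bookkeeping consequence of the trace computation together with results already in hand.
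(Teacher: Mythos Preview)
Your proof is correct and follows essentially the same approach as the paper: reduce to showing $\preccurlyeq V \succcurlyeq_{\mathfrak{u}_n} \subseteq \mathfrak{su}_n$, then invoke the already-established SVD-closedness of $\preccurlyeq V \succcurlyeq_{\mathfrak{u}_n}$. Your trace argument is in fact slightly more direct than the paper's, which reaches $tr(X) \in \mathbb{R}$ by first noting that $X$, being similar to $\overline{X}$, is similar to a \emph{real} matrix (via a Horn--Johnson citation); you bypass this by simply taking traces in $A = V\overline{A}V^{-1}$.
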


\begin{proof}
The Lie algebra of $\preccurlyeq  V  \succcurlyeq_{_{SU_n}}$ is \ $\preccurlyeq V \succcurlyeq_{_{\mathfrak{s\!u}_n}}\ \subseteq\ \preccurlyeq V \succcurlyeq_{_{\mathfrak{u}_n}}$ and this last is SVD-closed, so it suffices to prove the reverse inclusion. 
If $X \in \ \preccurlyeq V \succcurlyeq_{_{\mathfrak{u}_n}}$, \ being $V^* X V = \overline{X}$, \ then $X$ is similar to its complex conjugate $\overline{X}$ and so, by \cite[Cor.\,3.4.1.7 p.\,202]{HoJ2013}, $X$ is similar to a real matrix; therefore $X$ has real trace; since any skew-hermitian matrix has trace with zero real part, we conclude that the trace of $X$  is zero, i.e. $X \in \ \preccurlyeq V \succcurlyeq_{_{\mathfrak{s\!u}_n}}.$ 
\end{proof}

In the next results, we will need the matrices \ $W_{_{(p, q)}}$, \ $E_{_\varphi}^{(p, q)}$ \ and \ $J^{(p, q)}$ defined in Notations \ref{notazioni}(a).

\begin{lemma}\label{rem-Jp}
If $p = 0, 1, \cdots, n$ , we have $O_{(p, n-p)} (\mathbb{C}) \cap U_n = Ad_{_{W_{_{(p, n-p)}}}}(O_n)$ \ and \ $SO_{(p, n-p)} (\mathbb{C}) \cap U_n = Ad_{_{W_{_{(p, n-p)}}}}(SO_n)$.
\end{lemma}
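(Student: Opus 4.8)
The plan is to prove both equalities by exhibiting the explicit conjugation. Recall that $W_{_{(p,q)}} = I_{_p} \oplus {\bf i}I_{_q}$ is a diagonal unitary matrix, and observe the key algebraic identity $W_{_{(p, n-p)}} J^{(p, n-p)} W_{_{(p, n-p)}}^T = I_{_p} \oplus (-{\bf i}^2) I_{_{n-p}}\cdot(-1)$; more precisely one computes $W_{_{(p, n-p)}}\, I_{_n}\, W_{_{(p, n-p)}}^T = I_{_p} \oplus {\bf i}^2 I_{_{n-p}} = I_{_p} \oplus (-I_{_{n-p}}) = J^{(p, n-p)}$. This is the only computation one really needs.

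First I would fix $p$ and set $W := W_{_{(p, n-p)}}$, $J := J^{(p, n-p)}$, so $W I_{_n} W^T = J$. Now take $X \in O_n$, i.e. $X X^T = I_{_n}$ and $X$ real. Then $Ad_{_W}(X) = WXW^*$ lies in $U_n$ since $W$ and $X$ are unitary. To check $Ad_{_W}(X) \in O_{(p, n-p)}(\mathbb{C})$, one verifies $(WXW^*) J (WXW^*)^T = J$: using $W^* = \overline{W}$, $(WXW^*)^T = \overline{W} X^T W^T$ (as $X$ is real), so the left side is $WXW^* J \overline{W} X^T W^T$. Since $W$ is diagonal with entries in $\{1, {\bf i}\}$ and $J$ is real diagonal, $W^* J \overline{W} = \overline{W}^2 J = (I_{_p} \oplus (-I_{_{n-p}})) J = J^2 = I_{_n}$, wait — more carefully, $\overline{W}^2 = I_{_p} \oplus \overline{{\bf i}}^2 I_{_{n-p}} = I_{_p}\oplus(-I_{_{n-p}}) = J$, hence $W^* J \overline{W} = J\cdot J = I_{_n}$; thus the expression collapses to $W X X^T W^T = W W^T = I_{_n}\oplus(-1)$... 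I should instead simply observe $W W^T = W^2 = I_{_p} \oplus {\bf i}^2 I_{_{n-p}} = J$, giving exactly $J$. So $Ad_{_W}(X) \in O_{(p, n-p)}(\mathbb{C}) \cap U_n$, proving $Ad_{_W}(O_n) \subseteq O_{(p, n-p)}(\mathbb{C}) \cap U_n$.

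For the reverse inclusion, take $Y \in O_{(p, n-p)}(\mathbb{C}) \cap U_n$ and set $X := Ad_{_{W^{-1}}}(Y) = W^* Y W$. Running the same computation backwards — using $Y J Y^T = J$, $Y Y^* = I_{_n}$, and the identities for $W$ — shows that $X X^T = I_{_n}$ and (this is the point) $\overline{X} = X$, so $X$ is real, hence $X \in O_n$ and $Y = Ad_{_W}(X)$. Concretely, $\overline{X} = \overline{W}^* \overline{Y}\, \overline{W} = W^* \overline{Y} \overline{W}$; combining $Y^* = \overline{Y}^T$ with $Y J Y^T = J$ and $Y Y^* = I_{_n}$ one gets $\overline{Y} = J Y J$ wait — from $Y J Y^T = J$ we get $Y^T = J Y^{-1} J = J Y^* J$, so $\overline{Y} = (Y^T)^T{}^{*}$... the cleanest route is: $Y J Y^T = J$ and $Y Y^* = I$ together give $Y^{-1} = Y^* = J Y^T J$, i.e. $\overline{Y} = J Y J$ after transposing and conjugating appropriately; then $\overline{X} = W^* J Y J \overline{W}$, and since $W^* J = \overline{W}^{-1}\cdot$(diagonal) one checks $W^* J = W$ wait $W^* J = (I_p \oplus (-{\bf i})I_{n-p})(I_p\oplus(-I_{n-p})) = I_p \oplus {\bf i}I_{n-p} = W$, and similarly $J\overline{W} = W^*$, so $\overline{X} = W Y W^* $. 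Hmm, that gives $\overline{X} = W Y W^*$ while $X = W^* Y W$, which need not be equal. I will need to be more careful here; the correct bookkeeping is that $Y^* = J Y^T J$ is equivalent to $\overline{X} = X$ precisely because of the choice $W^2 = J$, and this step — matching the defining relation of $O_{(p,n-p)}(\mathbb{C})$ (quadratic in $Y$ with the transpose) against reality of $X$ — is the main obstacle and the one place the diagonal structure of $W_{_{(p,q)}}$ is essential. Once reality of $X$ is established, the determinant statement is immediate: $\det(Ad_{_W}(X)) = \det(X)$, so $Ad_{_W}$ restricts to a bijection $SO_n \to SO_{(p, n-p)}(\mathbb{C}) \cap U_n$, which is the second assertion.
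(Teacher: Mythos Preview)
Your overall strategy is correct and is in fact the same computation the paper carries out, but your reverse inclusion is left as an assertion after the bookkeeping goes wrong. The specific slip is in $\overline{X}$: since $W$ is diagonal, $\overline{W^*}=W$ and $\overline{W}=W^*$, so $\overline{X}=\overline{W^*}\,\overline{Y}\,\overline{W}=W\,\overline{Y}\,W^*$, not $W^*\overline{Y}\,\overline{W}$. From $YJY^T=J$ and $Y^{-1}=Y^*$ you get $Y^T=JY^*J$; transposing gives $Y=J\overline{Y}J$, i.e.\ $\overline{Y}=JYJ$. Hence
\[
\overline{X}=W\,\overline{Y}\,W^*=W J\,Y\,J W^*=(WJ)\,Y\,(JW^*)=W^*\,Y\,W=X,
\]
using $WJ=W^*$ and $JW^*=W$ (both immediate from $W^2=J$). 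That closes the gap; reality of $X$ then gives $XX^T=XX^*=I_n$, so $X\in O_n$, and the determinant statement follows since $\det(Ad_W(X))=\det(X)$.

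The paper's proof is exactly this argument, but packaged so that both inclusions come for free: it uses the notation $\preccurlyeq M\succcurlyeq_{\mathcal S}=\{X\in\mathcal S:XM=M\overline X\}$ and the general identity $Ad_A(\preccurlyeq M\succcurlyeq_{\mathcal S})=\;\preccurlyeq AMA^T\succcurlyeq_{Ad_A(\mathcal S)}$ from Remarks~\ref{parentesi-angolari2}(a). One then simply notes $\preccurlyeq I_n\succcurlyeq_{U_n}=O_n$, $\preccurlyeq J^{(p,n-p)}\succcurlyeq_{U_n}=O_{(p,n-p)}(\mathbb C)\cap U_n$, $W I_n W^T=J^{(p,n-p)}$, and $Ad_W(U_n)=U_n$. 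This avoids the entrywise bookkeeping that tripped you up.
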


\begin{proof}
Let $W:=W_{_{(p,n-p)}}$. Then the statements follow from Remarks \ref{parentesi-angolari2} (a), since 

$\preccurlyeq I_{_n} \succcurlyeq_{_{U_n}} = O_n$ , \ \ \ \ $\preccurlyeq I_{_n} \succcurlyeq_{_{SU_n}} = SO_n$ ,  \ \ \ \ $\preccurlyeq J^{(p, n-p)} \succcurlyeq_{_{U_n}} = O_{(p, n-p)} (\mathbb{C}) \cap U_n$,

$\preccurlyeq J^{(p, n-p)} \succcurlyeq_{_{SU_n}} = SO_{(p, n-p)} (\mathbb{C}) \cap U_n$, \ \ $W I_{_n} W^T = J^{(p, n-p)}
$ and the groups $U_n, SU_n$ are $Ad_{_W}$-invariant.
\end{proof}

\begin{lemma}\label{rem-Theta}
For every \ $\varphi \in \mathbb{R}$ \ and \ $ p = 0, 1, \cdots, n$, \ we have 

\ \ $\preccurlyeq E_{_{\varphi}}^{(p, n-p)}  \succcurlyeq_{_{U_{2n}}} = Ad_{_{W_{_{(2p,2n-2p)}}}}( \ \preccurlyeq E_{_{\varphi}}^{\oplus n}  \succcurlyeq_{_{U_{2n}}})$\ \ \ \ and

\ \ $\preccurlyeq E_{_{\varphi}}^{(p, n-p)}  \succcurlyeq_{_{SU_{2n}}} = Ad_{_{W_{_{(2p,2n-2p)}}}}( \ \preccurlyeq E_{_{\varphi}}^{\oplus n}  \succcurlyeq_{_{SU_{2n}}})$.
\end{lemma}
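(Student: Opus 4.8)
The plan is to deduce both equalities from Remark~\ref{parentesi-angolari2}(a), along the same lines as the proof of Lemma~\ref{rem-Jp}. I would write $W := W_{_{(2p, 2n-2p)}} = I_{_{2p}} \oplus {\bf i}I_{_{2n-2p}}$; this is a diagonal unitary matrix of order $2n$, so $W^T = W$, and since $Ad_{_W}$ is conjugation by a unitary matrix it maps $U_{2n}$ onto $U_{2n}$ and, preserving determinants, it also maps $SU_{2n}$ onto $SU_{2n}$. Applying Remark~\ref{parentesi-angolari2}(a) with $M = E_{_{\varphi}}^{\oplus n}$ and $\mathcal{S}$ equal to $U_{2n}$ (respectively $SU_{2n}$) then yields $Ad_{_W}\big(\preccurlyeq E_{_{\varphi}}^{\oplus n} \succcurlyeq_{_{\mathcal{S}}}\big) = \ \preccurlyeq W E_{_{\varphi}}^{\oplus n} W^T \succcurlyeq_{_{\mathcal{S}}}$, so both assertions reduce to checking the identity $W E_{_{\varphi}}^{\oplus n} W^T = E_{_{\varphi}}^{(p, n-p)}$.

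To verify this identity, the point to watch is the matching of block sizes: each copy of $E_{_{\varphi}}$ has order $2$, so $E_{_{\varphi}}^{\oplus n}$ splits as $E_{_{\varphi}}^{\oplus p} \oplus E_{_{\varphi}}^{\oplus(n-p)}$ into a summand of order $2p$ and one of order $2n-2p$, which is exactly the block pattern of $W = I_{_{2p}} \oplus {\bf i}I_{_{2n-2p}}$ (this is precisely why the index is $(2p, 2n-2p)$ and not $(p, n-p)$). Computing block by block, $W E_{_{\varphi}}^{\oplus n} W^T = \big(I_{_{2p}}\, E_{_{\varphi}}^{\oplus p}\, I_{_{2p}}\big) \oplus \big({\bf i}I_{_{2n-2p}}\, E_{_{\varphi}}^{\oplus(n-p)}\, {\bf i}I_{_{2n-2p}}\big) = E_{_{\varphi}}^{\oplus p} \oplus \big(-E_{_{\varphi}}\big)^{\oplus(n-p)} = E_{_{\varphi}}^{(p, n-p)}$, the last step being the definition of $E_{_{\varphi}}^{(p, n-p)}$ in Notations~\ref{notazioni}(a). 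The extreme cases are handled by the same computation: for $p = n$ one has $W = I_{_{2n}}$, and for $p = 0$ one has $W = {\bf i}I_{_{2n}}$, so $W E_{_{\varphi}}^{\oplus n} W^T = -E_{_{\varphi}}^{\oplus n} = E_{_{\varphi}}^{(0,n)}$.

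There is no genuine obstacle here; the argument is essentially a transcription of the one for Lemma~\ref{rem-Jp}, and the only mildly delicate point — which is why I would state it explicitly — is the elementary bookkeeping of block sizes described above: the diagonal unitary matrix conjugating $E_{_{\varphi}}^{\oplus n}$ has to be split consistently with the $2 \times 2$ blocks of $E_{_{\varphi}}^{\oplus n}$, which forces the subscript $(2p, 2n-2p)$ on $W$.
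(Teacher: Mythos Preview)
Your argument is correct and follows exactly the paper's own approach: set $W := W_{_{(2p,2n-2p)}}$, note that $Ad_{_W}$ preserves $U_{2n}$ and $SU_{2n}$, verify $W\,E_{_{\varphi}}^{\oplus n}\,W^{T} = E_{_{\varphi}}^{(p,n-p)}$, and conclude via Remark~\ref{parentesi-angolari2}(a). Your added block-size bookkeeping and explicit verification of the conjugation identity are helpful elaborations but do not change the method.
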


\begin{proof}
Let $W:=W_{_{(2p,2n-2p)}}$. \ \ The groups $U_{2n}$ and $SU_{2n}$ are $Ad_{_W}$-invariant and 

$W \ E_{_{\varphi}}^{\oplus n} \ W^T = E_{_{\varphi}}^{(p, n-p)}$\ ; hence, by Remarks \ref{parentesi-angolari2} (a), we get the statements .
\end{proof}

\begin{lemma}\label{lemma-Theta-plog}
Fix $\varphi \in [0, 2\pi)$, with $\varphi \ne \dfrac{\pi}{2}$ and  $\varphi \ne \dfrac{3}{2} \pi$; consider the matrix $ E_{_{\varphi}}^{\oplus n}$. Then a matrix $A \in \mathfrak{gl}_{2n} (\mathbb{C})$ anticommutes with $ E_{_{\varphi}}^{\oplus n}$ if and only if $A={\bf 0}_{_{2n}}$.
\end{lemma}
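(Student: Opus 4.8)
The plan is to exploit that $E_{_{\varphi}}^{\oplus n} = \cos(\varphi)\,I_{_{2n}} + \sin(\varphi)\,\Omega^{\oplus n}$, so that the equation $A\,E_{_{\varphi}}^{\oplus n} = - E_{_{\varphi}}^{\oplus n}\,A$ becomes
\begin{equation*}
\cos(\varphi)\,A + \sin(\varphi)\,A\,\Omega^{\oplus n} = -\cos(\varphi)\,A - \sin(\varphi)\,\Omega^{\oplus n}\,A,
\end{equation*}
that is, $2\cos(\varphi)\,A = -\sin(\varphi)\,(A\,\Omega^{\oplus n} + \Omega^{\oplus n}\,A)$. First I would split into cases according to whether $\cos(\varphi) = 0$ or not; since $\varphi \in [0,2\pi)$ with $\varphi \ne \pi/2$ and $\varphi \ne 3\pi/2$, the case $\cos(\varphi) = 0$ does not occur, so $\cos(\varphi) \ne 0$ throughout.

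The first substantive step is to show that $A$ commutes with $\Omega^{\oplus n}$. From $2\cos(\varphi)\,A = -\sin(\varphi)\,(A\,\Omega^{\oplus n} + \Omega^{\oplus n}\,A)$, right-multiply by $\Omega^{\oplus n}$ and use $(\Omega^{\oplus n})^2 = -I_{_{2n}}$ to get one relation, and left-multiply by $\Omega^{\oplus n}$ to get another; combining these two relations (adding them) should eliminate the mixed terms and yield, after using $\cos(\varphi) \ne 0$, the identity $A\,\Omega^{\oplus n} = \Omega^{\oplus n}\,A$. Concretely: from $2\cos(\varphi)\,A\,\Omega^{\oplus n} = -\sin(\varphi)(A(\Omega^{\oplus n})^2 + \Omega^{\oplus n}A\Omega^{\oplus n}) = \sin(\varphi)A - \sin(\varphi)\Omega^{\oplus n}A\Omega^{\oplus n}$ and the analogous $2\cos(\varphi)\,\Omega^{\oplus n}A = \sin(\varphi)A - \sin(\varphi)\Omega^{\oplus n}A\Omega^{\oplus n}$, we conclude $2\cos(\varphi)(A\,\Omega^{\oplus n} - \Omega^{\oplus n}A) = 0$, hence $A$ commutes with $\Omega^{\oplus n}$.

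Substituting $A\,\Omega^{\oplus n} = \Omega^{\oplus n}\,A$ back into $2\cos(\varphi)\,A = -\sin(\varphi)(A\,\Omega^{\oplus n} + \Omega^{\oplus n}\,A) = -2\sin(\varphi)\,\Omega^{\oplus n}\,A$ gives $\cos(\varphi)\,A = -\sin(\varphi)\,\Omega^{\oplus n}\,A$. Applying $\Omega^{\oplus n}$ once more and using $(\Omega^{\oplus n})^2 = -I_{_{2n}}$ yields $\cos(\varphi)\,\Omega^{\oplus n}A = -\sin(\varphi)(\Omega^{\oplus n})^2 A = \sin(\varphi)\,A$; feeding this into the previous line gives $\cos(\varphi)\,A = -\sin(\varphi)\cdot\frac{\sin(\varphi)}{\cos(\varphi)}\,A$, i.e. $\cos^2(\varphi)\,A = -\sin^2(\varphi)\,A$, so $A = -A$ and hence $A = {\bf 0}_{_{2n}}$. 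The converse is trivial. I do not anticipate a real obstacle here; the only care needed is the bookkeeping with $(\Omega^{\oplus n})^2 = -I_{_{2n}}$ and keeping track that $\cos(\varphi) \ne 0$ is exactly what the hypotheses $\varphi \ne \pi/2, 3\pi/2$ guarantee. An alternative, perhaps cleaner, route is eigenvalue-based: $E_{_{\varphi}}^{\oplus n}$ has eigenvalues $e^{\pm i\varphi}$, and $A$ anticommuting with $E_{_{\varphi}}^{\oplus n}$ would force $E_{_{\varphi}}^{\oplus n}$ and $-E_{_{\varphi}}^{\oplus n}$ to be similar, i.e. $\{e^{i\varphi}, e^{-i\varphi}\} = \{-e^{i\varphi}, -e^{-i\varphi}\}$, which fails precisely when $e^{i\varphi} \ne \pm i$, matching the hypothesis — but the direct computation above is self-contained and I would present that.
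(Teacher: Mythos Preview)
Your argument is correct and takes a genuinely different route from the paper's. The paper proceeds block-wise: it first solves the $2\times 2$ case by writing out the four scalar equations from $AE_{_\varphi}=-E_{_\varphi}A$, deducing (via $\cos(\varphi)\ne 0$) that $A=\alpha I_{_2}+\gamma\Omega$, and then observing that such a matrix also \emph{commutes} with the nonsingular $E_{_\varphi}$, forcing $A=0$; for general $n$ it reduces to this case by noting that anticommutation with $E_{_\varphi}^{\oplus n}$ forces each $2\times 2$ block of $A$ to anticommute with $E_{_\varphi}$. Your approach instead stays global and coordinate-free, exploiting $E_{_\varphi}^{\oplus n}=\cos(\varphi)I_{_{2n}}+\sin(\varphi)\Omega^{\oplus n}$ together with $(\Omega^{\oplus n})^2=-I_{_{2n}}$, which is arguably cleaner and avoids any block decomposition. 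One small slip: from $\cos^2(\varphi)\,A=-\sin^2(\varphi)\,A$ you should conclude directly $(\cos^2(\varphi)+\sin^2(\varphi))A=0$, hence $A=0$; the intermediate ``$A=-A$'' does not follow as written (it would require $\cos^2(\varphi)=\sin^2(\varphi)$), though the final conclusion is unaffected. Your eigenvalue-based aside is also essentially right but would need the extra observation that $E_{_\varphi}^{\oplus n}$ is diagonalizable, so that $A$ vanishes on a basis of eigenvectors; the direct computation you give is indeed self-contained.
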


\begin{proof}
Assume first $n=1$, so $ E_{_{\varphi}}^{\oplus n} = E_{_{\varphi}} = 
\begin{pmatrix}
\cos(\varphi) & -\sin(\varphi) \\ 
\sin(\varphi) & \cos(\varphi)
\end{pmatrix}$. If a matrix 

$A=
\begin{pmatrix}
\alpha & \beta \\ 
\gamma & \delta
\end{pmatrix} 
 \in \mathfrak{gl}_2 (\mathbb{C})$ anticommutes with  $E_{_{\varphi}}$, then
$
\begin{cases}
2 \alpha \cos(\varphi) =  (\gamma -\beta) \sin(\varphi)\\
2 \delta \cos(\varphi) =  (\gamma -\beta) \sin(\varphi)\\
2 \gamma \cos(\varphi) = -(\alpha + \delta) \sin(\varphi)\\
2 \beta \cos(\varphi) = (\alpha + \delta) \sin(\varphi)
\end{cases}
$.

Since $\cos(\varphi) \ne 0$, the previous conditions give: $\alpha= \delta$ and $\beta = -\gamma$, i.e. $A= \alpha I_{_2} + \gamma \Omega$. But this last matrix also commutes with the nonsingular matrix $E_{_{\varphi}}$ and so, $A$ must be the null matrix.

If $n \ge 2$, we write any matrix of $A \in \mathfrak{gl}_{2n} (\mathbb{C})$  as $A:= (A_{_{ij}})$, with
$n^2$ square blocks $A_{_{ij}}$ of order $2$. A direct computation shows that, if $A$ anticommutes with $ E_{_{\varphi}}^{\oplus n}$, then each block $A_{_{ij}}$ anticommutes with $E_{_{\varphi}}$; hence, the proof follows from the case $n=1$.
\end{proof}

\begin{lemma}\label{SUphi}
Fix $\varphi \in (0, 2 \pi)$ with $\varphi \ne \dfrac{\pi}{2}$, $\varphi \ne \pi$ and  $\varphi \ne \dfrac{3}{2} \pi$.
Then we have 
 
$\preccurlyeq E_{_{\varphi}}^{(p, n-p)}  \succcurlyeq_{_{SU_{2n}}} \ = \ \ \preccurlyeq E_{_{\varphi}}^{(p, n-p)}  \succcurlyeq_{_{U_{2n}}} \ =\ \ Ad_{_{W_{_{(2p,2n-2p)}}}}(U_n)$, 
\ \ for every \ $p= 0, \cdots , n$, 
in which we put (consistently with Remarks \ref{identificazioni} (a)) \ \ $U_n=\mathfrak{gl}_n(\mathbb{C})\cap SO_{2n}\subset SU_{2n}$.
\end{lemma}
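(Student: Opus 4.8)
The plan is to first reduce everything to the case $p=n$ and then to prove the single identity $\preccurlyeq E_{_{\varphi}}^{\oplus n}\succcurlyeq_{_{U_{2n}}}=U_n$, from which the full statement follows. Indeed, by Lemma \ref{rem-Theta} conjugation by $W_{_{(2p,2n-2p)}}$ carries $\preccurlyeq E_{_{\varphi}}^{\oplus n}\succcurlyeq_{_{U_{2n}}}$ and $\preccurlyeq E_{_{\varphi}}^{\oplus n}\succcurlyeq_{_{SU_{2n}}}$ onto $\preccurlyeq E_{_{\varphi}}^{(p,n-p)}\succcurlyeq_{_{U_{2n}}}$ and $\preccurlyeq E_{_{\varphi}}^{(p,n-p)}\succcurlyeq_{_{SU_{2n}}}$ respectively, so it suffices to work with $p=n$; and since $U_n=\mathfrak{gl}_n(\mathbb{C})\cap SO_{2n}\subseteq SU_{2n}$, once we know $\preccurlyeq E_{_{\varphi}}^{\oplus n}\succcurlyeq_{_{U_{2n}}}=U_n$ we immediately get $\preccurlyeq E_{_{\varphi}}^{\oplus n}\succcurlyeq_{_{SU_{2n}}}=\preccurlyeq E_{_{\varphi}}^{\oplus n}\succcurlyeq_{_{U_{2n}}}\cap\,SU_{2n}=U_n$ as well. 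Thus only the $U_{2n}$-equality needs an argument.

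The inclusion $U_n\subseteq\preccurlyeq E_{_{\varphi}}^{\oplus n}\succcurlyeq_{_{U_{2n}}}$ is the easy half: an element $X\in U_n=\mathfrak{gl}_n(\mathbb{C})\cap SO_{2n}$ is a real orthogonal matrix which commutes with $\Omega^{\oplus n}$ (under the identification of Remark \ref{identificazioni}(a), $\Omega^{\oplus n}$ is the image of the central matrix ${\bf i}I_{_n}\in\mathfrak{gl}_n(\mathbb{C})$), hence it commutes with $E_{_{\varphi}}^{\oplus n}=\cos(\varphi)I_{_{2n}}+\sin(\varphi)\Omega^{\oplus n}$, and therefore $XE_{_{\varphi}}^{\oplus n}X^T=E_{_{\varphi}}^{\oplus n}XX^T=E_{_{\varphi}}^{\oplus n}$.

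For the reverse inclusion I would start from $X\in\preccurlyeq E_{_{\varphi}}^{\oplus n}\succcurlyeq_{_{U_{2n}}}$, i.e. $XE_{_{\varphi}}^{\oplus n}=E_{_{\varphi}}^{\oplus n}\overline{X}$, and, by conjugating and using that $E_{_{\varphi}}^{\oplus n}$ is real, also $\overline{X}E_{_{\varphi}}^{\oplus n}=E_{_{\varphi}}^{\oplus n}X$; eliminating $\overline{X}$ between these two relations gives $X(E_{_{\varphi}}^{\oplus n})^2=(E_{_{\varphi}}^{\oplus n})^2X$, that is, $X$ commutes with $E_{_{2\varphi}}^{\oplus n}$. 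Here the hypotheses on $\varphi$ enter: $2\varphi=k\pi$ with $\varphi\in(0,2\pi)$ would force $\varphi\in\{\pi/2,\pi,3\pi/2\}$, exactly the excluded values, so $2\varphi\ne k\pi$ for all $k\in\mathbb{Z}$ and Lemma \ref{commutare-phi}(a) applies, giving that $X$ commutes with $\Omega^{\oplus n}$, hence with $E_{_{\varphi}}^{\oplus n}$; comparing with $XE_{_{\varphi}}^{\oplus n}=E_{_{\varphi}}^{\oplus n}\overline{X}$ yields $X=\overline{X}$, so $X$ is real. Then by Lemma \ref{commutare-phi}(b) every $2\times2$ block of $X$ has the form $\begin{pmatrix} a & -b\\ b & a\end{pmatrix}$ with $a,b\in\mathbb{R}$, i.e. $X=\rho(Z)$ for a suitable $Z\in\mathfrak{gl}_n(\mathbb{C})$; finally $X^TX=I_{_{2n}}$ together with $\rho(Z)^T=\rho(Z^*)$ forces $Z^*Z=I_{_n}$, so $Z\in U_n$ and $X=\rho(Z)\in\mathfrak{gl}_n(\mathbb{C})\cap SO_{2n}=U_n$ by Remark \ref{identificazioni}(a).

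To conclude, applying $Ad_{_{W_{_{(2p,2n-2p)}}}}$ to the identity $\preccurlyeq E_{_{\varphi}}^{\oplus n}\succcurlyeq_{_{SU_{2n}}}=\preccurlyeq E_{_{\varphi}}^{\oplus n}\succcurlyeq_{_{U_{2n}}}=U_n$ and invoking Lemma \ref{rem-Theta} gives the statement for all $p=0,\dots,n$. I expect the only genuinely delicate point to be the commutation analysis of the third paragraph: the crucial observation is that an element of $\preccurlyeq E_{_{\varphi}}^{\oplus n}\succcurlyeq_{_{U_{2n}}}$ is forced to commute with the \emph{square} $(E_{_{\varphi}}^{\oplus n})^2=E_{_{2\varphi}}^{\oplus n}$, and the exclusion of $\varphi=\pi/2,\pi,3\pi/2$ is precisely what keeps this square away from $\pm I_{_{2n}}$ and lets Lemma \ref{commutare-phi}(a) collapse the condition to commutation with $\Omega^{\oplus n}$; everything afterwards is bookkeeping with the decomplexification $\rho$.
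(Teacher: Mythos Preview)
Your proof is correct, but your route to the key identity $\preccurlyeq E_{_{\varphi}}^{\oplus n}\succcurlyeq_{_{U_{2n}}}=U_n$ differs from the paper's. The paper writes $X=X_{_1}+{\bf i}X_{_2}$ with $X_{_1},X_{_2}$ real, so that $XE_{_{\varphi}}^{\oplus n}=E_{_{\varphi}}^{\oplus n}\overline{X}$ becomes the pair of conditions $X_{_1}E_{_{\varphi}}^{\oplus n}=E_{_{\varphi}}^{\oplus n}X_{_1}$ and $X_{_2}E_{_{\varphi}}^{\oplus n}=-E_{_{\varphi}}^{\oplus n}X_{_2}$; the anticommutation Lemma~\ref{lemma-Theta-plog} (which needs $\cos\varphi\ne0$, i.e.\ $\varphi\ne\pi/2,3\pi/2$) kills $X_{_2}$, and then Lemma~\ref{commutare-phi}(a) (which needs $\varphi\ne k\pi$, i.e.\ $\varphi\ne\pi$) handles $X_{_1}$. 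Your squaring trick---deriving $X(E_{_{\varphi}}^{\oplus n})^2=(E_{_{\varphi}}^{\oplus n})^2X$ and applying Lemma~\ref{commutare-phi}(a) to $E_{_{2\varphi}}^{\oplus n}$ directly---bypasses Lemma~\ref{lemma-Theta-plog} altogether and packages the three excluded values of $\varphi$ into the single condition $2\varphi\ne k\pi$. This is a cleaner argument, self-contained within Lemma~\ref{commutare-phi}; the paper's decomposition into commuting and anticommuting parts is perhaps more transparent conceptually but requires the extra anticommutation lemma.
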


\begin{proof}
By Lemma \ref{rem-Theta}, we have to prove that $\preccurlyeq E_{_{\varphi}}^{\oplus n}  \succcurlyeq_{_{SU_{2n}}}= \ \preccurlyeq E_{_{\varphi}}^{\oplus n}  \succcurlyeq_{_{U_{2n}}} = U_n$. 
For, \ a complex matrix $X = X_{_1} + {\bf i} X_{_2}$ ($X_{_1}, X_{_2}$ real matrices) satisfies the condition $X \, E_{_{\varphi}}^{\oplus n} =  \, E_{_{\varphi}}^{\oplus n} \overline{X}$ if and only if $X_{_1} \, E_{_{\varphi}}^{\oplus n}=  \, E_{_{\varphi}}^{\oplus n} X_{_1}$ \ \ and \ \ $X_{_2} \, E_{_{\varphi}}^{\oplus n}= -  \, E_{_{\varphi}}^{\oplus n} X_{_2}$ and, by Lemmas \ref{lemma-Theta-plog} and \ref{commutare-phi}, this is equivalent to say that $X \in \mathfrak{gl}_n (\mathbb{C}) \subseteq \mathfrak{gl}_{2n} (\mathbb{R})$ (and, in this case, $det(X)\geq 0$). Hence, by Remarks \ref{identificazioni} (a), we get $\preccurlyeq E_{_{\varphi}}^{\oplus n}  \succcurlyeq_{_{SU_{2n}}} = \mathfrak{gl}_n (\mathbb{C}) \cap SU_{2n} =  \mathfrak{gl}_n (\mathbb{C}) \cap SO_{2n} = U_n$ and similarly,
\ $\preccurlyeq E_{_{\varphi}}^{\oplus n}  \succcurlyeq_{_{U_{2n}}} = \mathfrak{gl}_n (\mathbb{C}) \cap U_{2n} = \mathfrak{gl}_n (\mathbb{C}) \cap SO_{2n} = U_n$.
\end{proof}

\begin{lemma}\label{lemma-UH}
Remembering Remarks \ref{identificazioni} (b), \ we have \ \ \ 

$\preccurlyeq \Omega^{\oplus n} \succcurlyeq_{_{SU_{2n} \!}}\ = \ \preccurlyeq \Omega^{\oplus n} \succcurlyeq_{_{U_{2n} \!}}\  =\ U_n (\mathbb{H})$ \ \ \ \ and
\ \ \ \ $\preccurlyeq \Omega^{\oplus n} \succcurlyeq_{_{\mathfrak{s\!u}_{2n} \!}}\ = \ \preccurlyeq \Omega^{\oplus n} \succcurlyeq_{_{\mathfrak{u}_{2n} \!}}\  =\ \mathfrak{u}_n (\mathbb{H})$.
\end{lemma}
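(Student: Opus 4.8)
The plan is to transport the defining relation to the standard symplectic form by conjugating with the orthogonal matrix $B$ of Remarks \ref{identificazioni}(c), and then to read off the result from the identification $Ad_{_B}\big(U_n(\mathbb{H})\big) = Sp_n$ proved there.

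First I would observe that $\Omega^{\oplus n}$ and $\Omega_{_n}$ are real orthogonal, hence belong to $U_{2n}$, so Remark-Definition \ref{svdgroups} (with $n$ replaced by $2n$) gives $\preccurlyeq \Omega^{\oplus n} \succcurlyeq_{_{U_{2n}}} = \{X \in U_{2n} : X\,\Omega^{\oplus n}\,X^T = \Omega^{\oplus n}\}$ and $\preccurlyeq \Omega_{_n} \succcurlyeq_{_{U_{2n}}} = \{X \in U_{2n} : X\,\Omega_{_n}\,X^T = \Omega_{_n}\} = Sp_{2n}(\mathbb{C}) \cap U_{2n} = Sp_n$, the last two equalities being the definitions of $Sp_{2n}(\mathbb{C})$ and of $Sp_n$ in Notations \ref{notazioni}(b). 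Since $B \in O_{2n}$ and $B^T \Omega_{_n} B = \Omega^{\oplus n}$ (Remarks \ref{identificazioni}(c)), we have $Ad_{_B}\big(\Omega^{\oplus n}\big) = B\,\Omega^{\oplus n}\,B^T = \Omega_{_n}$; moreover $Ad_{_B}$ maps $U_{2n}$ onto itself because $B$ is unitary. Hence, by Remarks \ref{parentesi-angolari2}(a), $Ad_{_B}\big(\preccurlyeq \Omega^{\oplus n} \succcurlyeq_{_{U_{2n}}}\big) = \preccurlyeq Ad_{_B}(\Omega^{\oplus n}) \succcurlyeq_{_{U_{2n}}} = \preccurlyeq \Omega_{_n} \succcurlyeq_{_{U_{2n}}} = Sp_n = Ad_{_B}\big(U_n(\mathbb{H})\big)$. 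As $Ad_{_B}$ is a bijection, this yields $\preccurlyeq \Omega^{\oplus n} \succcurlyeq_{_{U_{2n}}} = U_n(\mathbb{H})$.

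Next, since $U_n(\mathbb{H}) \subseteq SU_{2n}$ (Remarks \ref{identificazioni}(b)), intersecting with $SU_{2n}$ changes nothing: $\preccurlyeq \Omega^{\oplus n} \succcurlyeq_{_{SU_{2n}}} = \preccurlyeq \Omega^{\oplus n} \succcurlyeq_{_{U_{2n}}} \cap\, SU_{2n} = U_n(\mathbb{H})$. For the Lie-algebra statements, Remarks \ref{parentesi-angolari2}(b) identifies $\preccurlyeq \Omega^{\oplus n} \succcurlyeq_{_{\mathfrak{u}_{2n}}}$ and $\preccurlyeq \Omega^{\oplus n} \succcurlyeq_{_{\mathfrak{su}_{2n}}}$ with the Lie algebras of $\preccurlyeq \Omega^{\oplus n} \succcurlyeq_{_{U_{2n}}}$ and of $\preccurlyeq \Omega^{\oplus n} \succcurlyeq_{_{SU_{2n}}}$ respectively, both of which equal $U_n(\mathbb{H})$, whose Lie algebra is $\mathfrak{u}_n(\mathbb{H})$; the inclusion $\mathfrak{u}_n(\mathbb{H}) \subseteq \mathfrak{su}_{2n}$ from Remarks \ref{identificazioni}(b) guarantees consistency, so both sets coincide with $\mathfrak{u}_n(\mathbb{H})$.

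I do not anticipate any real obstacle. The only step needing a moment of care is verifying that conjugation by $B$ carries the condition $X\,\Omega^{\oplus n}\,X^T = \Omega^{\oplus n}$ to $Y\,\Omega_{_n}\,Y^T = \Omega_{_n}$, which is precisely Remarks \ref{parentesi-angolari2}(a) once one knows $B\,\Omega^{\oplus n}\,B^T = \Omega_{_n}$. If one prefers a self-contained argument avoiding $B$, one can instead combine $X\,\Omega^{\oplus n}\,X^T = \Omega^{\oplus n}$ with unitarity $X^{-1} = \overline{X}^{\,T}$ to obtain $X = -\,\Omega^{\oplus n}\,\overline{X}\,\Omega^{\oplus n}$; writing $X$ in $n^2$ blocks of order $2$ and computing $\Omega\,\overline{Y}\,\Omega$ blockwise shows every block of $X$ has the form $\begin{pmatrix} a & b \\ -\overline{b} & \overline{a} \end{pmatrix} = \Psi(\text{a quaternion})$, so $X \in U_{2n} \cap \mathfrak{gl}_n(\mathbb{H}) = U_n(\mathbb{H})$; the reverse inclusion follows from the identity $\Psi(A)\,\Omega^{\oplus n}\,\Psi(A)^T = \Omega^{\oplus n}$, a direct consequence of the relations in Remarks \ref{identificazioni}(b) together with $(\Omega^{\oplus n})^2 = -I_{2n}$ and $A A^* = I_n$.
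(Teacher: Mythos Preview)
Your proof is correct and takes a genuinely different route from the paper's own argument. The paper proceeds by a direct blockwise computation: it writes $X=Y+{\bf i}Z$ with $Y,Z$ real, observes that $X\,\Omega^{\oplus n}=\Omega^{\oplus n}\overline{X}$ is equivalent to $Y$ commuting and $Z$ anticommuting with $\Omega^{\oplus n}$, and then checks by hand that the resulting $2\times 2$ block pattern of $X$ is exactly the image of $\Psi$. Your main argument instead transports everything to the standard symplectic form via the permutation matrix $B$ of Remarks~\ref{identificazioni}(c), identifies $\preccurlyeq\Omega^{\oplus n}\succcurlyeq_{_{U_{2n}}}$ with $\preccurlyeq\Omega_{_n}\succcurlyeq_{_{U_{2n}}}=Sp_n$ using Remarks~\ref{parentesi-angolari2}(a), and then invokes $Ad_{_B}\big(U_n(\mathbb{H})\big)=Sp_n$ from Remarks~\ref{identificazioni}(c). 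This is slick and conceptual; its only cost is that the fact $Ad_{_B}\big(U_n(\mathbb{H})\big)=Sp_n$ is asserted rather than fully justified in Remarks~\ref{identificazioni}(c), and a complete proof of that assertion is essentially equivalent in content to the lemma at hand---so your argument trades computation here for computation (or a dimension/connectedness count) there. Your closing ``self-contained'' alternative, deriving $X=-\Omega^{\oplus n}\overline{X}\,\Omega^{\oplus n}$ and reading off the $\Psi$-block form, is in spirit the paper's approach, organized a bit more compactly than the real/imaginary split.
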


\begin{proof}
Any matrix $X = Y + {\bf i} Z \in \mathfrak{gl}_{2n}(\mathbb{C})$ (with $Y, Z \in \mathfrak{gl}_{2n}(\mathbb{R})$) satisfies the condition $X \, \Omega^{\oplus n} =  \, \Omega^{\oplus n} \overline{X}$ if and only if $Y \Omega^{\oplus n} = \Omega^{\oplus n} Y$ and $Z\Omega^{\oplus n}= -\Omega^{\oplus n}Z$. A direct computation shows that these conditions on $Y$ and $Z$ are equivalent to say that  $Y=(Y_{_{ij}})$ and 
$Z = (Z_{_{ij}})$ are block matrices, whose blocks $Y_{_{ij}} , Z_{_{ij}}$ are $2\times2$ real matrices of the form:
$
Y_{_{ij}}=\begin{pmatrix}
a_{_{ij}} & -b_{_{ij}} \\ 
b_{_{ij}} & a_{_{ij}}
\end{pmatrix} 
,$ \ 
$
Z_{_{ij}}=\begin{pmatrix}
c_{_{ij}} & d_{_{ij}} \\ 
d_{_{ij}} & -c_{_{ij}}
\end{pmatrix} 
$, for $i,j = 1, \cdots , n.$ These last conditions are equivalent to say that 
$X = (X_{_{ij}})$ is a block matrix, with $n^2$ blocks of the form:
$X_{_{ij}}=\begin{pmatrix}
z_{_{ij}} & -w_{_{ij}} \\ 
\overline{w}_{_{ij}} & \overline{z}_{_{ij}}
\end{pmatrix},$  and, by Remarks \ref{identificazioni} (b), this is equivalent to say that $X\in \mathfrak{gl}_n (\mathbb{H})$.
Hence \ $\preccurlyeq \Omega^{\oplus n} \succcurlyeq_{_{SU_{2n} \!}}\ = \ SU_{2n} \cap \mathfrak{gl}_n (\mathbb{H}) = \ U_n (\mathbb{H})\ = \ U_{2n} \cap \mathfrak{gl}_n (\mathbb{H}) = \ \preccurlyeq \Omega^{\oplus n} \succcurlyeq_{_{U_{2n} \!}}\ $ and, by Remarks \ref{parentesi-angolari2} (b), we also get \  $\preccurlyeq \Omega^{\oplus n} \succcurlyeq_{_{\mathfrak{s\!u}_{2n} \!}}\ = \ \preccurlyeq \Omega^{\oplus n} \succcurlyeq_{_{\mathfrak{u}_{2n} \!}}\  =\ \mathfrak{u}_n (\mathbb{H})$.
\end{proof}

\begin{rems}\label{oss-prima-teor-comp}
a) For any  $Q \in O_n$, there exists a matrix $A \in O_n$ such that $Q = Ad_{_A}(\mathcal{J}) = A \mathcal{J}A^T $, where $\mathcal{J}$ is a matrix of the form $\mathcal{J}:=J^{(p, q)} \oplus \big( \bigoplus\limits_{j=1}^h  E_{_{\varphi_{_j}}}^{(\mu_{_j}, \, \nu_{_j})} \big) \oplus \Omega^{\oplus k}$, 

with  $0 < \varphi_{_1} < \varphi_{_2} < \cdots < \varphi_{_h} < \dfrac{\pi}{2}$; \ \ $p+q + 2 \sum\limits_{j=1}^h (\mu_{_j}+\nu_{_j}) + 2k =n$; \  \  $p, q,  k, \mu_{_j}, \nu_{_j} \ge 0$; \ \  $\mu_{_j} + \nu_{_j} \ge 1$  \ \ (see for instance \cite[Rem.-Def.\,1.8]{DoPe2021}, where we called $\mathcal{J}$ the \emph{real Jordan auxiliary  form} of $Q$). 
Hence the (possible) eigenvalues of $Q$ and their multiplicities are the following: $1$ of multiplicity $p \ge 0$; $-1$ of multiplicity $q \ge 0$;  $\pm {\bf i}$  both of multiplicity $k \ge 0$;
when $h > 0$, $e^{\pm {\bf i} \varphi_{_j}}$ both of multiplicity $\mu_{_j} \ge 0$ and  $e^{\pm {\bf i} (\pi -\varphi_{_j})} = - e^{\mp {\bf i} \varphi_{_j}}$ both of
multiplicity  $\nu_{_j} \ge 0$, for every $j= 1, \cdots , h$. The condition $\mu_{_j} + \nu_{_j} \ge 1$ is equivalent to say that $e^{\pm {\bf i} \varphi_{_j}}$ or $e^{\pm {\bf i} (\pi -\varphi_{_j})}$ (and possibly both) are  effective eigenvalues of $Q$.

\smallskip

b) If $Q, A,  \mathcal{J} \in O_n$ are as in (a), we have
$Ad_{_A} \big(I_{_1} \oplus (-I_{_{(n-1)}}\big)) \in \ \preccurlyeq Q \succcurlyeq_{_{SU_n}}$ if and only if $n$ is odd.
\ Indeed, if $n$ is odd, the real matrix $Q$ has at least one real eigenvalue.
\end{rems}

\begin{prop}\label{propprinc}
Let $Q \in O_n$; denote its eigenvalues (with their multiplicities) and the matrices $A , \mathcal{J} \in O_n$  as in Remarks \ref{oss-prima-teor-comp} (a). If $Z$ is the $n\times n$ unitary matrix defined by 

$Z :=\ A \bigg(W_{(p, q)}\oplus \bigg[\bigoplus\limits_{j=1}^h W_{(2\mu_{_j}, 2\nu_{_j})}\bigg] \oplus I_{_{2k}}\bigg)$,
then 

$\preccurlyeq Q \succcurlyeq_{_{U_n}}\ = \ Ad_{_Z}\bigg(O_{(p+q)}\oplus \bigg[\bigoplus\limits_{j=1}^h U_{(\mu_{_j}+ \nu_{_j})} \bigg]\oplus U_k (\mathbb{H})\bigg)$\ ,

$\preccurlyeq Q \succcurlyeq_{_{SU_n}}\ = \ Ad_{_Z}\bigg(SO_{(p+q)}\oplus \bigg[\bigoplus\limits_{j=1}^h U_{(\mu_{_j}+ \nu_{_j})} \bigg]\oplus U_k (\mathbb{H})\bigg)$\ ,

and they are (compact) SVD-closed subgroups of $U_n$, whose common Lie algebra is  

$\preccurlyeq Q \succcurlyeq_{_{\mathfrak{s\!u}_n}} \ = \ \preccurlyeq Q \succcurlyeq_{_{\mathfrak{u}_n}} \ = \ Ad_{_Z}\bigg(\mathfrak{so}_{(p+q)}\oplus \bigg[\bigoplus\limits_{j=1}^h \mathfrak{u}_{(\mu_{_j}+ \nu_{_j})} \bigg]\oplus \mathfrak{u}_k (\mathbb{H})\bigg)$.

The group $\preccurlyeq Q \succcurlyeq_{_{U_n}}$ is connected if $Q$ has no real eigenvalues, otherwise it has two connected components. In any case, $\preccurlyeq Q \succcurlyeq_{_{SU_n}}$ is the connected component of $\preccurlyeq Q \succcurlyeq_{_{U_n}}$ containing the identity $I_{_n}$.
\end{prop}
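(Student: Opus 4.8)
The strategy is to reduce the problem, by means of the unitary conjugation $Ad_{_Z}$, to a block-diagonal situation where each block is one of the three ``model'' subgroups already identified in the preliminary lemmas. First I would recall from Remarks \ref{oss-prima-teor-comp}(a) that $Q = Ad_{_A}(\mathcal{J})$, with $\mathcal{J} = J^{(p,q)} \oplus \big(\bigoplus_{j=1}^h E_{_{\varphi_{_j}}}^{(\mu_{_j},\nu_{_j})}\big) \oplus \Omega^{\oplus k}$, and $A \in O_n$. Using Remarks \ref{parentesi-angolari2}(a), conjugation by $A \in O_n$ gives $\preccurlyeq Q \succcurlyeq_{_{U_n}} = Ad_{_A}\big(\preccurlyeq \mathcal{J} \succcurlyeq_{_{U_n}}\big)$, so it suffices to handle $\mathcal{J}$. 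Because the diagonal blocks $J^{(p,q)}$, the $E_{_{\varphi_{_j}}}^{(\mu_{_j},\nu_{_j})}$ and $\Omega^{\oplus k}$ have pairwise disjoint spectra (the eigenvalues $\pm 1$, $e^{\pm {\bf i}\varphi_{_j}}$, $-e^{\mp {\bf i}\varphi_{_j}}$, $\pm {\bf i}$ listed in Remarks \ref{oss-prima-teor-comp}(a) are all distinct since $0 < \varphi_{_1} < \cdots < \varphi_{_h} < \pi/2$), a variant of Lemma \ref{anticomm} — applied to the condition $X\mathcal{J} = \mathcal{J}\overline{X}$, i.e. $X$ intertwines $\mathcal{J}$ with its inverse $\overline{\mathcal{J}}$, noting $\overline{\mathcal{J}}$ has the same block structure — forces any $X \in \preccurlyeq \mathcal{J}\succcurlyeq_{_{\mathfrak{g\!l}_n(\mathbb{C})}}$ to be block-diagonal conformably with $\mathcal{J}$. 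Hence $\preccurlyeq \mathcal{J}\succcurlyeq_{_{U_n}}$ decomposes as the direct sum of $\preccurlyeq J^{(p,q)}\succcurlyeq_{_{U_{p+q}}}$, the $\preccurlyeq E_{_{\varphi_{_j}}}^{(\mu_{_j},\nu_{_j})}\succcurlyeq_{_{U_{2(\mu_{_j}+\nu_{_j})}}}$, and $\preccurlyeq \Omega^{\oplus k}\succcurlyeq_{_{U_{2k}}}$.

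Next I would identify each block-factor. By Lemma \ref{rem-Jp}, $\preccurlyeq J^{(p,q)}\succcurlyeq_{_{U_{p+q}}} = O_{(p,q)}(\mathbb{C})\cap U_{p+q} = Ad_{_{W_{_{(p,q)}}}}(O_{p+q})$. For each $j$, since $\varphi_{_j} \in (0,\pi/2)$ avoids $\pi/2$, $\pi$, $3\pi/2$, Lemma \ref{SUphi} gives $\preccurlyeq E_{_{\varphi_{_j}}}^{(\mu_{_j},\nu_{_j})}\succcurlyeq_{_{U_{2(\mu_{_j}+\nu_{_j})}}} = Ad_{_{W_{_{(2\mu_{_j},2\nu_{_j})}}}}(U_{\mu_{_j}+\nu_{_j}})$, where $U_{\mu_{_j}+\nu_{_j}} = \mathfrak{gl}_{\mu_{_j}+\nu_{_j}}(\mathbb{C}) \cap SO_{2(\mu_{_j}+\nu_{_j})}$. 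Finally, Lemma \ref{lemma-UH} gives $\preccurlyeq \Omega^{\oplus k}\succcurlyeq_{_{U_{2k}}} = U_k(\mathbb{H})$. Assembling these and absorbing the various $W$-conjugations into the single matrix $Z = A\big(W_{(p,q)}\oplus[\bigoplus_j W_{(2\mu_{_j},2\nu_{_j})}]\oplus I_{_{2k}}\big)$ (legitimate because $Ad$ is a homomorphism and the $W$'s sit block-diagonally, so $Ad_{_A}\circ Ad_{_{\oplus W}} = Ad_{_{A\cdot \oplus W}} = Ad_{_Z}$), yields the claimed formula for $\preccurlyeq Q\succcurlyeq_{_{U_n}}$. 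Intersecting with $SU_n$ — equivalently, since $U_{\mu_{_j}+\nu_{_j}}$ viewed inside $SO_{2(\mu_{_j}+\nu_{_j})}$ and $U_k(\mathbb{H}) \subset SU_{2k}$ already have determinant $1$ (cf. Remarks \ref{identificazioni}(a),(b)), the determinant constraint only bites on the $O_{p+q}$ factor, cutting it down to $SO_{p+q}$ — gives the formula for $\preccurlyeq Q\succcurlyeq_{_{SU_n}}$. The identification of the Lie algebra is then immediate by differentiating, using Remarks \ref{parentesi-angolari2}(b) and the already-stated identifications $\preccurlyeq \Omega^{\oplus n}\succcurlyeq_{_{\mathfrak{u}_{2n}}} = \mathfrak{u}_n(\mathbb{H})$ etc.; SVD-closedness of both groups follows from Remark-Definition \ref{svdgroups} (they are of the form $\preccurlyeq V\succcurlyeq$ with $V \in U_n$, intersected with $SU_n$, and Lemma \ref{SUn(V)} covers the $SU_n$ case).

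For the last assertion about connected components: $SO_{p+q}$ is connected, each $U_{\mu_{_j}+\nu_{_j}}$ is connected, and $U_k(\mathbb{H})$ is connected, so the $Ad_{_Z}$-image giving $\preccurlyeq Q\succcurlyeq_{_{SU_n}}$ is connected and contains $I_{_n}$. The group $\preccurlyeq Q\succcurlyeq_{_{U_n}}$ differs from it only in replacing the connected $SO_{p+q}$ factor by $O_{p+q}$, which has two components when $p+q \ge 1$ and is a single point (contributing nothing) when $p+q = 0$. By Remarks \ref{oss-prima-teor-comp}(a), $p+q \ge 1$ is exactly the condition that $Q$ has a real eigenvalue; so $\preccurlyeq Q\succcurlyeq_{_{U_n}}$ has two components precisely when $Q$ has a real eigenvalue, and in that case $\preccurlyeq Q\succcurlyeq_{_{SU_n}}$, being connected, of the right dimension, and containing $I_{_n}$, is its identity component.

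\textbf{Main obstacle.} The one point demanding genuine care is the block-diagonalization step: Lemma \ref{anticomm} as stated is about (anti)commuting with a block-diagonal \emph{semisimple} matrix $D$, whereas here $X$ must satisfy $X\mathcal{J} = \mathcal{J}\overline{X}$, which is not literally a commutation relation. The fix is to observe that this is the condition $(\overline{\mathcal{J}})^{-1}(\overline{X})\,\overline{\mathcal{J}}$... — more cleanly, writing $X = X_1 + {\bf i}X_2$ with $X_1, X_2$ real, the relation splits into $X_1\mathcal{J} = \mathcal{J}X_1$ and $X_2\mathcal{J} = -\mathcal{J}X_2$, i.e. $X_1$ commutes and $X_2$ anticommutes with $\mathcal{J}$; then Lemma \ref{anticomm}(a),(b) apply directly once one checks the spectral disjointness hypotheses ($S_i\cap S_j = \emptyset$ and $S_i\cap(-S_j)=\emptyset$ across distinct blocks), which is exactly the content of the eigenvalue list in Remarks \ref{oss-prima-teor-comp}(a) together with $\varphi_{_j} \in (0,\pi/2)$. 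After this reduction the blockwise analysis is routine modulo the cited lemmas.
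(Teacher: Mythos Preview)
Your proposal is correct and follows essentially the same route as the paper's proof: reduce to $\mathcal{J}$ via $Ad_{_A}$, split $X=X_{_1}+{\bf i}X_{_2}$ so that $X\mathcal{J}=\mathcal{J}\overline{X}$ becomes the pair of conditions $X_{_1}\mathcal{J}=\mathcal{J}X_{_1}$ and $X_{_2}\mathcal{J}=-\mathcal{J}X_{_2}$, apply Lemma~\ref{anticomm}(a),(b) to force block-diagonality, and then identify each block via Lemmas~\ref{rem-Jp}, \ref{SUphi}, \ref{lemma-UH}. The paper presents the real/imaginary splitting up front rather than as an afterthought, but the logical content and the cited lemmas are identical.
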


\begin{proof}
From Remark-Definition \ref{svdgroups} and Lemma \ref{SUn(V)}, it follows that the groups $\preccurlyeq Q \succcurlyeq_{_{U_n}}$ and $\preccurlyeq Q \succcurlyeq_{_{SU_n}}$ are SVD-closed and their  common Lie algebras is  $\preccurlyeq Q \succcurlyeq_{_{\mathfrak{u}_n}}=\ \preccurlyeq Q \succcurlyeq_{_{\mathfrak{s\!u}_n}}.$ 
By Remarks \ref{parentesi-angolari2} (a), we have
$\preccurlyeq Q \succcurlyeq_{_{U_n}}=Ad_{_A}(\preccurlyeq \mathcal{J} \succcurlyeq_{_{U_n}}),$\ \  
$\preccurlyeq Q \succcurlyeq_{_{SU_n}}=Ad_{A}(\preccurlyeq \mathcal{J} \succcurlyeq_{_{SU_n}}).$ Now we determine the groups $\preccurlyeq \mathcal{J} \succcurlyeq_{_{U_n}}$ and $\preccurlyeq \mathcal{J} \succcurlyeq_{_{SU_n}}$.
A matrix $X = X_{_1} + {\bf i} X_{_2}\in \mathfrak{gl}_n(\mathbb{C})$ (with $X_{_1}, X_{_2} \in \mathfrak{gl}_n(\mathbb{R})$) satisfies the condition $X \mathcal{J} = \mathcal{J} \overline{X}$ if and only if $X_{_1}\mathcal{J}= \mathcal{J} X_{_1}$ \ \ and \ \ $X_{_2}\mathcal{J}= - \mathcal{J} X_{_2}$. 
\ By Lemma \ref{anticomm} (b), the condition $X_{_1}\mathcal{J}= \mathcal{J} X_{_1}$ implies that

$X_{_1} = Y_{_0} \oplus \bigg[ \bigoplus_{j=1}^h Y_{_j} \bigg] \oplus Y_{_{(h+1)}}$, \ \ \  where $Y_{_0} \in \mathfrak{gl}_{(p+q)} (\mathbb{R})$, \ \  \ $Y_{_j} \in \mathfrak{gl}_{(2\mu_{_j}+2\nu_{_j})} (\mathbb{R})$ \  for every $j= 1, \cdots , h$ \ and \ \  $Y_{_{(h+1)}} \in \mathfrak{gl}_{2k} (\mathbb{R})$.
By Lemma \ref{anticomm} (a), the condition $X_{_2}\mathcal{J}= - \mathcal{J} X_{_2}$ implies that also the matrix $X_{_2}$ must be block-diagonal, with blocks of the same type as the blocks of $X_{_1}$. Therefore, if $X$ satisfies the condition $X \mathcal{J} = \mathcal{J} \overline{X}$, then $X$ is block-diagonal with similar blocks, this time complex instead of real. Of course, $X$ is unitary if and only if each single block is unitary too. 
Then, setting $ U= W_{(p, q)}\oplus \bigg[\bigoplus\limits_{j=1}^h W_{(2\mu_{_j}, 2\nu_{_j})}\bigg] \oplus I_{_{2k}}$ and taking into account also Lemmas \ref{rem-Jp}, \ref{SUphi} and \ref{lemma-UH} , we obtain

$
\preccurlyeq \mathcal{J} \succcurlyeq_{_{U_n}} \ = \ \preccurlyeq J^{(p, q)} \succcurlyeq_{_{U_{(p+q)}}} \oplus \bigg[\bigoplus\limits_{j=1}^h \preccurlyeq E_{_{\varphi_{_j}}}^{(\mu_j, \nu_j)} \succcurlyeq_{_{U_{(2\mu_{_j}+2\nu_{_j})}}}
  \bigg] \oplus \preccurlyeq \Omega^{\oplus k} \succcurlyeq_{_{U_{2k}}}
\ =$

$\preccurlyeq J^{(p, q)} \succcurlyeq_{_{U_{(p+q)}}} \oplus \bigg[\bigoplus\limits_{j=1}^h \preccurlyeq E_{_{\varphi_{_j}}}^{(\mu_j, \nu_j)} \succcurlyeq_{_{SU_{(2\mu_{_j}+2\nu_{_j})}}}
  \bigg] \oplus \preccurlyeq \Omega^{\oplus k} \succcurlyeq_{_{SU_{2k}}}=$
  
$Ad_{_U}\bigg(O_{(p+q)}\oplus \bigg[\bigoplus\limits_{j=1}^h U_{(\mu_{_j}+ \nu_{_j})} \bigg]\oplus U_k (\mathbb{H})\bigg)$; 
  
$\preccurlyeq \mathcal{J} \succcurlyeq_{_{SU_n}}  \ = \ \preccurlyeq J^{(p, q)} \succcurlyeq_{_{SU_{(p+q)}}} \oplus \bigg[\bigoplus\limits_{j=1}^h \preccurlyeq E_{_{\varphi_{_j}}}^{(\mu_j, \nu_j)} \succcurlyeq_{_{SU_{(2\mu_{_j}+ 2\nu_{_j})}}}
  \bigg] \oplus \preccurlyeq \Omega^{\oplus k} \succcurlyeq_{_{SU_{2k}}}=$
  
$ Ad_{_U}\bigg(SO_{(p+q)}\oplus \bigg[\bigoplus\limits_{j=1}^h U_{(\mu_{_j}+ \nu_{_j})} \bigg]\oplus U_k (\mathbb{H})\bigg)$. 

\smallskip

From these equalities, easily follow the statements that still remain to be proved.
\end{proof}

\section{Generalized principal $\mathfrak{g}$-logarithms}\label{Sect-plog}

\begin{defi}\label{Def-SVD}
Let $G$ be a connected closed subgroup of $GL_n (\mathbb{C})$, whose Lie algebra is $\mathfrak{g} \subseteq \mathfrak{gl}_n (\mathbb{C})$ . If $M \in G$, we say that a matrix $L \in \mathfrak{g}$ is a \emph{generalized principal} $\mathfrak{g}$-\emph{logarithm} of $M$, if \ $\exp(L) = M$ \ and \ $- \pi \le Im(\lambda) \le\pi$, \ for every eigenvalue $\lambda$ of $L$.

We denote by $\mathfrak{g}$--$plog(M)$ the set of all generalized principal $\mathfrak{g}$-logarithms of any $M \in G$.
\end{defi}

\begin{rems}\label{confronta-Higham}
a) In Introduction, we compared the previous definition with the usual definition of \emph{principal logarithm} of a matrix $M \in GL_n (\mathbb{C})$ without negative eigenvalues, in which case the set $\mathfrak{gl}_n (\mathbb{C})$--$plog(M)$ consists of a unique matrix (\cite[Thm.\,1.31]{Hi2008}).

b) If $G$ is any connected closed subgroup of $GL_n (\mathbb{C})$, with Lie algebra $\mathfrak{g}  \subseteq \mathfrak{gl}_n (\mathbb{C})$, then $\rho(G)$ is a connected closed subgroup of $GL_{2n} (\mathbb{ R}) \subset GL_{2n} (\mathbb{C})$, having $\rho(\mathfrak{g}) \subset \mathfrak{gl}_{2n} (\mathbb{R}) \subset\mathfrak{gl}_{2n} (\mathbb{C})$ as Lie algebra, where $\rho$ is the decomplexification map. Remembering the relationship between the eigenvalues of $Z$ and $\rho(Z)$ (see Remarks \ref{identificazioni} (a)), we easily get that 
\\ $\rho\big(\mathfrak{g}$--$plog(M)\big) = \rho(\mathfrak{g})$--$plog(\rho(M))$, \ \ for every $M \in G$.
\end{rems}

\begin{lemma}\label{lemma-plog-coniug}
Let $G, H$ be connected closed subgroups of $GL_n (\mathbb{C})$ such that $G=Ad_{_A}(H)$, for some $A \in GL_n (\mathbb{C})$, and let 
$\mathfrak{g}, \mathfrak{h} \subseteq \mathfrak{gl}_n (\mathbb{C})$ be their Lie algebras, respectively. Then

$Ad_{_A}(\mathfrak{h}$--$plog(M)) = \mathfrak{g}$--$plog(Ad_{_A}(M))$, \  for every $M \in H$. 

In particular, if $G$ is any connected closed subgroup of $GL_n (\mathbb{C})$, we have

$Ad_{_A}(\mathfrak{g}$--$plog(M)) = \mathfrak{g}$--$plog(Ad_{_A}(M))$, \ for every $A, M \in G$. 
\end{lemma}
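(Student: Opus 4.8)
The plan is to prove the first displayed equality (the conjugation formula for $\mathfrak{h}$ vs.\ $\mathfrak{g}$), since the ``in particular'' statement is simply the case $H=G$ with $A\in G$. The strategy is a direct set-theoretic verification, showing mutual inclusion by checking the two defining conditions of Definition \ref{Def-SVD} are transported correctly by the inner automorphism $Ad_{_A}$.

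First I would observe that $Ad_{_A}$ restricts to a linear isomorphism $\mathfrak{h}\to\mathfrak{g}$ (because $G=Ad_{_A}(H)$ forces $\mathfrak{g}=Ad_{_A}(\mathfrak{h})$ at the level of Lie algebras, differentiating the group isomorphism at the identity). Then for $L\in\mathfrak{h}$ I would use the two standard facts recalled in the excerpt: $Ad_{_A}$ commutes with the exponential map (Notations \ref{notazioni}(d)), so $\exp(Ad_{_A}(L))=A\exp(L)A^{-1}=Ad_{_A}(\exp(L))$; and $Ad_{_A}(L)=ALA^{-1}$ is similar to $L$, hence has exactly the same eigenvalues (with multiplicities). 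Consequently $\exp(Ad_{_A}(L))=Ad_{_A}(M)$ if and only if $\exp(L)=M$, and the imaginary-part condition $-\pi\le Im(\lambda)\le\pi$ for all eigenvalues $\lambda$ holds for $Ad_{_A}(L)$ if and only if it holds for $L$. Therefore $L\in\mathfrak{h}\text{--}plog(M)$ precisely when $Ad_{_A}(L)\in\mathfrak{g}\text{--}plog(Ad_{_A}(M))$, which gives $Ad_{_A}(\mathfrak{h}\text{--}plog(M))\subseteq\mathfrak{g}\text{--}plog(Ad_{_A}(M))$; applying the same argument to $Ad_{_A}^{-1}=Ad_{_{A^{-1}}}$ gives the reverse inclusion.

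For the ``in particular'' clause I would note that if $G$ is a connected closed subgroup of $GL_n(\mathbb{C})$ and $A\in G$, then $Ad_{_A}(G)=G$ (since $G$ is a group) and $Ad_{_A}(\mathfrak{g})=\mathfrak{g}$, so the hypotheses of the first part hold with $H=G$, $\mathfrak{h}=\mathfrak{g}$, and the formula $Ad_{_A}(\mathfrak{g}\text{--}plog(M))=\mathfrak{g}\text{--}plog(Ad_{_A}(M))$ follows immediately for every $A,M\in G$.

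This proof is essentially routine; there is no real obstacle. The only point requiring a moment's care is the passage $\mathfrak{g}=Ad_{_A}(\mathfrak{h})$, i.e.\ that conjugation of the groups induces conjugation of the Lie algebras — but this is immediate from differentiating the isomorphism $H\to G$, $X\mapsto AXA^{-1}$, at the identity, using that $Ad_{_A}$ is linear and continuous. Everything else is the combination of the two invariance properties of $Ad_{_A}$ (commuting with $\exp$, preserving the spectrum) with the observation that both defining conditions of a generalized principal logarithm are phrased purely in terms of $\exp$ and the spectrum.
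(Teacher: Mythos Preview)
Your proof is correct and follows essentially the same route as the paper's own proof: both observe that $G=Ad_{_A}(H)$ forces $\mathfrak{g}=Ad_{_A}(\mathfrak{h})$, then use that $Ad_{_A}$ commutes with $\exp$ and preserves the spectrum to obtain the biconditional $L\in\mathfrak{h}\text{--}plog(M)\Leftrightarrow Ad_{_A}(L)\in\mathfrak{g}\text{--}plog(Ad_{_A}(M))$. Your treatment is slightly more verbose (you spell out the reverse inclusion via $Ad_{_{A^{-1}}}$ and the ``in particular'' clause explicitly), but the argument is the same.
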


\begin{proof}
Note that $G=Ad_{_A}(H)$ implies that $\mathfrak{g} = Ad_{_A}(\mathfrak{h})$. Hence $B \in \mathfrak{g}$ if and only if $A^{-1} B A \in \mathfrak{h}$. Since $B$ and $A^{-1} B A$ are similar and $\exp(B) = A M A^{-1}$ if and only if $\exp(A^{-1} B A) = M$, we get: $B \in \mathfrak{g}$--$plog(Ad_{_A}(M))$ if and only if $A^{-1} B A \in \mathfrak{h}$--$plog(M)$.
\end{proof}

\begin{rem}\label{val-sing-immag}
The eigenvalues of any skew-hermitian matrix $A$ are purely imaginary; so, the generalized principal $\mathfrak{u}_n$-logarithms of any $M \in U_n$ are the skew-hermitian logarithms of $M$, whose eigenvalues all have modulus in $[0, \pi]$.
Note that, since all the eigenvalues of any $M  \in U_n$ have modulus $1$, the only possible negative eigenvalue of such $M$ is $-1$.

In this Section, given any unitary matrix $M$ of order $n$, we will denote its eigenvalues by $e^{{\bf i} \theta_{_1}}$ with multiplicity $m_{_1}$, $e^{{\bf i} \theta_{_2}}$ with multiplicity $m_{_2}$, up to $e^{{\bf i} \theta_{_p}}$ with multiplicity $m_{_p}$, where 
$\pi \ge \theta_{_1} > \theta_{_2} > \cdots > \theta_{_p} > -\pi$ \ and \ $n=\sum\limits_{j=1}^p m_{_j}.$ 
\ \ If $-1$ is not an eigenvalue of $M$ (i.e. if $\theta_{_1} < \pi$), then the eigenvalues of the unique generalized principal 
$\mathfrak{gl}_n(\mathbb{C)}$-logarithm of $M$ are exactly: ${\bf i} \theta_{_1}$ with multiplicity $m_{_1}$, ${\bf i} \theta_{_2}$ with multiplicity $m_{_2}$, up to ${\bf i} \theta_{_p}$ with multiplicity $m_{_p}$.
\ \ Instead, if $-1$ is an eigenvalue of $M$ (i.e. if $\theta_{_1} = \pi$), then the eigenvalues of any  generalized principal $\mathfrak{gl}_n(\mathbb{C)}$-logarithm $Y$ of $M$ are exactly: ${\bf i} \pi$ of multiplicity $h$, \ ${-\bf i} \pi$ of multiplicity $m_{_1}\!\!-h$ (for some $ h \in \lbrace0,1, \cdots, m_{_1} \rbrace$ depending on $Y$), ${\bf i} \theta_{_2}$ with multiplicity $m_{_2}$, up to ${\bf i} \theta_{_p}$ with multiplicity $m_{_p}$.
Note that, if $Y$ is any  generalized  principal 
$\mathfrak{u}_n$-logarithm of $M$, in any case we have
\ \ $\Vert Y \Vert_{_\phi}^2= \ -tr(Y^2) \ = \ \sum\limits_{j=1}^n m_{_j}\theta_{_j}^2 \ = \ \sum\limits_{j=1}^n m_{_j} |\log(e^{{\bf i} \theta_{_j}})|^2$. 
\end{rem}

\begin{prop}\label{esist-gplog}
Let $G$ be a connected SVD-closed subgroup of $U_n$, whose Lie algebra is $\mathfrak{g} \subseteq \mathfrak{u}_n$. Then 

a) $\mathfrak{g}$--$plog(M) \ne \emptyset$, for every $M \in G$ and, furthermore, if $-1$ is not an eigenvalue of $M$, then $\mathfrak{g}$--$plog(M)$ consists of a single element;

b) If $Y \in \mathfrak{g}$--$plog(M)$, then $\Vert Y \Vert_{_\phi} \le \Vert X \Vert_{_\phi}$,
 for every $X \in \mathfrak{g}$ such that $\exp(X) = M$; moreover the equality holds if and only if $X \in \mathfrak{g}$--$plog(M)$.
\end{prop}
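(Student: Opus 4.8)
The plan is to exploit the SVD-closedness of $\mathfrak{g}$ to produce a simultaneous unitary ``block decomposition'' of $M$ that is compatible with $\mathfrak{g}$, and then build a generalized principal logarithm block by block. First I would apply Proposition \ref{esist-gplog}'s hypotheses together with the general theory of connected compact subgroups: since $G\subseteq U_n$ is compact and connected, $\exp:\mathfrak{g}\to G$ is surjective, so there exists some $X\in\mathfrak{g}$ with $\exp(X)=M$. Write its SVD-decomposition $X=\sum_{j=1}^p\theta_{_j}A_{_j}$ after collecting equal moduli; because $X$ is skew-hermitian, Lemma \ref{autov-SVD-A} describes all eigenvalues of $X$ in terms of the $\theta_{_j}$, and Lemma \ref{exp-svd} gives an explicit formula for $\exp(X)$ in terms of the SVD-components $A_{_j}$ and $A_{_j}^2$. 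Since $\mathfrak{g}$ is SVD-closed, each $A_{_j}\in\mathfrak{g}$, hence each $A_{_j}^2\in\mathfrak{g}$ is not automatically true, but each $\theta A_{_j}\in\mathfrak{g}$ for any real $\theta$ because $A_{_j}$ is an SVD-component of $\theta A_{_j}\in\mathfrak{g}\setminus\{0\}$ (using Remarks-Definitions \ref{SVD-rec}(b)). This is the crucial leverage: I can freely rescale the ``angles'' $\theta_{_j}$ appearing in $X$ while staying inside $\mathfrak{g}$.

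Next I would reduce each $\theta_{_j}$ modulo $2\pi$ to lie in a fundamental window, turning $X$ into a candidate logarithm $Y=\sum_{j}\widehat\theta_{_j}A_{_j}$ with all $\widehat\theta_{_j}\in(-\pi,\pi]$ (or $[-\pi,\pi]$), which by Lemma \ref{autov-SVD-A}(b) has all eigenvalues with imaginary part in $[-\pi,\pi]$, hence $Y\in\mathfrak{g}$--$plog(M)$ once I check $\exp(Y)=M$. The equality $\exp(Y)=\exp(X)=M$ is immediate from Lemma \ref{exp-svd}, because $\sin$ and $\cos$ are $2\pi$-periodic and the formula only sees $\sin(\widehat\theta_{_j})$ and $1-\cos(\widehat\theta_{_j})$; this proves $\mathfrak{g}$--$plog(M)\ne\emptyset$. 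For the uniqueness claim in (a): if $-1$ is not an eigenvalue of $M$, then $\pi$ is never forced as an angle, so the $\widehat\theta_{_j}$ all lie in $(-\pi,\pi)$ and are uniquely determined as $\log$ of the eigenvalues of $M$ on the corresponding eigenspaces; more carefully, any $Y'\in\mathfrak{g}$--$plog(M)\subseteq\mathfrak{u}_n$ is a generalized principal $\mathfrak{u}_n$-logarithm of $M$, and Remark \ref{val-sing-immag} shows that, absent the eigenvalue $-1$, the generalized principal $\mathfrak{gl}_n(\mathbb{C})$-logarithm is unique, forcing $Y'$ to equal that unique matrix.

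For part (b), I would compute $\Vert X\Vert_{_\phi}^2=-\mathrm{tr}(X^2)=\sum_j m_{_j}|\lambda_{_j}|^2$ summing over eigenvalues $\lambda_{_j}=\mathbf{i}\alpha_{_j}$ of $X$, using Notations \ref{notazioni}(e). Since $\exp(X)=M$, each $e^{\mathbf{i}\alpha_{_j}}$ is an eigenvalue of $M$, so $\alpha_{_j}\equiv\theta$ (mod $2\pi$) for the corresponding $\theta\in(-\pi,\pi]$ with $e^{\mathbf{i}\theta}$ an eigenvalue of $M$; therefore $|\alpha_{_j}|\ge|\theta|$ with equality exactly when $\alpha_{_j}=\theta$ (or $\alpha_{_j}=\pm\pi$ when $\theta=\pi$). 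Comparing with the formula from Remark \ref{val-sing-immag} that $\Vert Y\Vert_{_\phi}^2=\sum_j m_{_j}\theta_{_j}^2$ for any $Y\in\mathfrak{g}$--$plog(M)$ (the eigenvalue multiplicities of $M$ being fixed), this gives $\Vert Y\Vert_{_\phi}\le\Vert X\Vert_{_\phi}$ termwise, with equality iff every eigenvalue of $X$ already lies in the strip $\mathrm{Im}\in[-\pi,\pi]$, i.e. iff $X\in\mathfrak{g}$--$plog(M)$.

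The main obstacle I anticipate is the bookkeeping needed to match eigenvalues-with-multiplicity of $X$ to those of $M$ correctly when $-1$ is an eigenvalue: there the splitting of the multiplicity $m_{_1}$ of $-1$ between the angles $+\pi$ and $-\pi$ is genuinely non-canonical, and one must argue that every such splitting is realized by some element of $\mathfrak{g}$--$plog(M)$ (giving the claimed non-uniqueness) while still all having the same Frobenius norm $\sqrt{\sum m_{_j}\theta_{_j}^2}$. Handling this via the SVD-component $A_{_1}$ attached to $\theta_{_1}=\pi$ — whose own eigenvalues $\pm\mathbf{i},0$ with multiplicities $\mu_{_1},\nu_{_1}$ encode precisely this splitting, and which lies in $\mathfrak{g}$ by SVD-closedness — is what makes the argument go through, but it is the delicate point.
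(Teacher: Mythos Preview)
Your approach is correct and essentially identical to the paper's: both take an arbitrary $X\in\mathfrak{g}$ with $\exp(X)=M$ (surjectivity of $\exp$ on compact connected groups), SVD-decompose it, use SVD-closedness to keep the components in $\mathfrak{g}$, reduce the coefficients modulo $2\pi$ via Lemma~\ref{exp-svd}, and appeal to Lemma~\ref{autov-SVD-A} to bound the eigenvalues of the resulting $Y$; part~(b) is the same termwise eigenvalue inequality $(\theta_j+2k\pi)^2\ge\theta_j^2$ in both. Your closing worry about realizing \emph{every} $\pm\pi$ splitting inside $\mathfrak{g}$--$plog(M)$ is not needed here---the proposition only asserts non-emptiness and the norm characterization, not a description of all elements---so that obstacle does not arise.
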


\begin{proof}
a) If $M= I_{_n}$, it is clear that $\mathfrak{g}$--$plog(M)= \lbrace {\bf 0}_{_n}\!\rbrace$ and the statement holds true.

Fix $M \in G \setminus{\lbrace I_{_n}\!\rbrace}$ and denote its eigenvalues as in Remark \ref{val-sing-immag}. Since $G$ is compact and connected, we can choose  a skew-hermitian matrix $X \in \mathfrak{g} \setminus \lbrace {\bf 0}_{_n}\!\rbrace$ such that $\exp(X) = M$ (see, for instance, \cite[Ch.\,IV Thm.\,2.2]{BrtD1985}). Then, the $n$ eigenvalues of $X$ are ${\bf i}(\theta_{_1} +2 k_{_{1,1}} \pi)$, ${\bf i}(\theta_{_1} +2 k_{_{1,2}} \pi)$, $\cdots$, ${\bf i}(\theta_{_1} +2 k_{_{1,m_{_1}}} \pi)$; ${\bf i}(\theta_{_2} +2 k_{_{2,1}} \pi)$, $\cdots$, ${\bf i}(\theta_{_2} +2 k_{_{2,m_{_2}}} \pi)$; $\cdots$; up to ${\bf i}(\theta_{_p} +2 k_{_{p,1}} \pi)$, $\cdots$, ${\bf i}(\theta_{_p} +2 k_{_{p,m_{_p}}} \pi)$, where  $k_{_{h, j}} \in \mathbb{Z}$, \ for every $h, j$. 
We also denote by $\sigma_{_1} > \sigma_{_2} > \dots > \sigma_{_s} >0$ the distinct non-zero singular values of $X$. Since $X \in \mathfrak{u}_n$, there exist $\psi_{_h} \in \lbrace\theta_{_1}, \cdots, \theta_{_p}\!\rbrace$ and $t_{{_h}} \in \mathbb{Z}$ such that $\sigma_{_h}= |\psi_{{_h}}+ 2t_{{_h}}\pi|$, for every $h=1, \cdots, s$. If 
$X= \sum\limits_{h=1}^s |\psi_{_h} + 2t_{_h} \pi| X_{_h}$ is the SVD-decomposition of $X$, then every SVD-component $X_{_h}$ of $X$ belongs to $\mathfrak{g}$, because $G$ is SVD-closed.
Of course, for $h=1, \cdots, s$, we have  $| \psi_{_h} + 2 t_{_h} \pi| = \pm (\psi_{_h} + 2 t_{_h} \pi)$, and so
$X= \sum\limits_{h=1}^s (\psi_{_h} + 2t_{_h} \pi) Y_{_h} = \sum\limits_{i=1}^s \psi_{_h} Y_{_h} + \sum\limits_{i=1}^s 2 \pi t_{_h} Y_{_h},$ where $Y_{_h}=\pm X_{_h}.$ Note that, by Remarks-Definitions \ref{SVD-rec} (b), $\{Y_{_h}\}_{_{1\leq h \leq s}}$ is still an SVD-system of elements of $\mathfrak{g}\ .$
Taking into account Lemma \ref{exp-svd} and the mutual commutativity of the $Y_{_h}$'s, \  we have: 
$M= \exp(X) = \exp(\sum\limits_{h=1}^s \psi_{_h} Y_{_h}) \exp(\sum\limits_{i=1}^s  2 \pi l_{_h} Y_{_h}) =
\exp(\sum\limits_{h=1}^s \psi_{_h} Y_{_h})$.
So, if we denote $Y:= \sum\limits_{h=1}^s \psi_{_h} Y_{_h}$, we have $Y \in \mathfrak{g}$ and $M=\exp(Y)$. By Lemma \ref{autov-SVD-A}, every non-zero eigenvalue of $Y$ is of the form \ $\pm \bf i \theta_{_h}$, \ for some $h = 1, \cdots , p\ ;$ \ hence $Y$ is a  generalized  principal $\mathfrak{g}$-logarithm of $M$. 
\ By Remarks \ref{confronta-Higham} (a), \ if $-1$ is not an eigenvalue of $M$, \ the set \ $\mathfrak{g}$--$plog(M)$ necessarily reduces to the single matrix $Y$.

b) Let $X \in \mathfrak{g}$ any logarithm of $M$, with eigenvalues  as in (a), and let $Y \in \mathfrak{g}$--$plog(M)$.
 
Then, $\Vert X \Vert_{_\phi}^2 = - tr(X^2) = \sum\limits_{j=1}^p \sum\limits_{r=1}^{m_{_j}} (\theta_{_j} + 2 k_{_{j,r}} \pi)^2 = \sum\limits_{j=1}^p m_{_j} \theta_{_j}^2 + 4 \pi \sum\limits_{j=1}^p \sum\limits_{r=1}^{m_{_j}} k_{_{j,r}}(\theta_{_j} + k_{_{j,r}} \pi) =$
 
$ - tr(Y^2) + 4 \pi \sum\limits_{j=1}^p \sum\limits_{r=1}^{m_{_j}} k_{_{j,r}}(\theta_{_j} + k_{_{j,r}} \pi) = \Vert Y \Vert_{_\phi}^2 + 4 \pi \sum\limits_{j=1}^p \sum\limits_{r=1}^{m_{_j}} k_{_{j,r}}(\theta_{_j} + k_{_{j,r}} \pi) $ \ (with $k_{_{j,r}}\in \mathbb{Z}$).
 
If $\theta_{_j} \in (-\pi, \pi)$, we easily get $k_{_{j,r}}(\theta_{_j} + k_{_{j,r}} \pi) \ge 0$, with equality if and only if $k_{_{j,r}}=0$. 

If $\theta_{_1}= \pi$, clearly we get $k_{_{1,r}}(\theta_{_1} + k_{_{1,r}} \pi)= \pi k_{_{1,r}} (1+ k_{_{1,r}})\geq 0$, with equality if and only if either $k_{_{1,r}} = -1$ or $k_{_{1,r}}= 0$.
Since the case $k_{_{1,r}} = -1$ gives \ $-\mathbf{i} \pi$ \ as eigenvalue of $X$, we can conclude that $\Vert X \Vert_{_\phi}^2 \ge \Vert Y \Vert_{_\phi}^2$, and the equality holds if and only if the possible eigenvalues of $X$ are only $- {\bf i} \pi $ and ${\bf i} \theta_{_j}$ ($1 \le j \le p$), i.e. if and only if $X \in G \in \mathfrak{g}$--$plog(M)$. 
\end{proof}

\begin{rem}\label{ipotesi-SVDinv}
Assume that $n \ge 3$. As noted in Remark \ref{controesempi}, \ $SU_n$ is not SVD-closed. Moreover there are matrices $M \in SU_n$ such that $\mathfrak{su}_n$--$plog(M) = \emptyset$. This is the case of $M = e^{2\pi{\bf i}/n} I_{_n}$. Indeed, $-1$ is not an eigenvalue of $M$ (since $n \ge 3$), and hence, the unique generalized  principal $\mathfrak{gl}_n(\mathbb{C)}$--logarithm  of $M$ is $L:=\dfrac{2\pi{\bf i}}{n} I_{_n}$, whose trace is $2\pi{\bf i} \ne 0$, so $L \notin \mathfrak{su}_n$. 
Hence, the SVD-closure condition in Proposition \ref{esist-gplog} cannot be removed.
\end{rem}

\begin{thm}\label{maximal-torus}
Let $G$ be a connected SVD-closed subgroup of $U_n$, whose Lie algebra is $\mathfrak{g} \subseteq \mathfrak{u}_n$; let $M \in G$ and let $T$ be a maximal torus of $G$ containing $M$, with Lie algebra $\mathfrak{t}$.

Then there are $L_{_1}, \cdots , L_{_s} \in \mathfrak{t}$--$plog(M)$ ($s \geq 1$) such that
\ $\mathfrak{g}$--$plog(M) = \bigsqcup\limits_{j=1}^s Ad\big(\langle M \rangle_{_G}\!\big)(L_{_j})$.
\ \ Furthermore, each set \ $Ad\big(\langle M \rangle_{_G}\!\big)(L_{_j})$ is a compact submanifold of $\mathfrak{g}$,  diffeomorphic to the homogeneous space $\dfrac{ \langle M \rangle_{_G}}{\langle L_j \rangle_{_G}}$.
\end{thm}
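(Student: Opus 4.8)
The plan is to reduce everything to a maximal torus and then use the structure of the adjoint orbit. First I would observe that any element $Y\in\mathfrak g$--$plog(M)$ is skew-hermitian, hence contained in some maximal abelian subalgebra of $\mathfrak g$; since $\exp(Y)=M$, the closure of the one-parameter group $\exp(\mathbb R Y)$ together with $T$ generates a torus, and by conjugacy of maximal tori in the compact connected group $G$ there is some $g\in G$ with $Ad_g(Y)\in\mathfrak t$. Moreover, because $\exp(Y)=M=\exp(Ad_g Y)$ and $M$ is fixed, one checks $g$ can be chosen in the centralizer $\langle M\rangle_G$: indeed $g\,M\,g^{-1}=g\exp(Y)g^{-1}=\exp(Ad_gY)$ lies in $T$, and a standard argument (replace $g$ by an element of $\langle M\rangle_G$ times $g$ using that $T\subseteq\langle M\rangle_G$ acts transitively enough) gives $g\in\langle M\rangle_G$. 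Hence every component of $\mathfrak g$--$plog(M)$ meets $\mathfrak t$--$plog(M)$, and $\mathfrak g$--$plog(M)=\bigcup_{L\in\mathfrak t\text{-}plog(M)}Ad(\langle M\rangle_G)(L)$.

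Next I would analyze $\mathfrak t$--$plog(M)$ itself. Fixing a basis of simultaneous eigenvectors (all elements of $\mathfrak t$ commute with each other and with $M$), an element $L\in\mathfrak t$ with $\exp(L)=M$ has eigenvalues ${\bf i}(\theta_j+2k\pi)$ over the eigenspaces of $M$; the constraint $-\pi\le\mathrm{Im}(\lambda)\le\pi$ forces, on each eigenspace of $M$ for an eigenvalue $\ne -1$, a unique choice, while on the $(-1)$-eigenspace of $M$ (if present) the eigenvalue of $L$ can be $\pm{\bf i}\pi$. So $\mathfrak t$--$plog(M)$ is a finite set: its cardinality is bounded by the number of ways of splitting the $(-1)$-eigenspace, intersected with the linear condition of lying in $\mathfrak t$. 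In particular $\mathfrak t$--$plog(M)=\{L_1,\dots,L_s\}$ with $s\ge1$ (nonemptiness is Proposition~\ref{esist-gplog}, applied with $G$ replaced by the SVD-closed torus $T$, or directly).

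It then remains to show the union $\bigsqcup_{j=1}^s Ad(\langle M\rangle_G)(L_j)$ is \emph{disjoint} and that each orbit is the claimed manifold. For the manifold statement: $\langle M\rangle_G$ is a closed, hence compact, subgroup of the compact group $G$ (Remarks~\ref{parentesi-angolari2}(b)), so by Remark-Definition~\ref{Ad-orbit} the orbit $Ad(\langle M\rangle_G)(L_j)$ is a compact embedded submanifold of $\mathfrak{gl}_n(\mathbb C)$ diffeomorphic to $\langle M\rangle_G/\langle L_j\rangle_{\langle M\rangle_G}$; and since $L_j$ commutes with $M$, the isotropy subgroup $\langle L_j\rangle_{\langle M\rangle_G}=\langle L_j\rangle_G\cap\langle M\rangle_G$ equals $\langle L_j\rangle_G$ because $\exp(L_j)=M$ already implies every element commuting with $L_j$ commutes with $M$. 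The orbit sits inside $\mathfrak g$ since $Ad_g$ preserves $\mathfrak g$ for $g\in G$ and preserves the eigenvalue condition, so it lies in $\mathfrak g$--$plog(M)$. For disjointness: if $Ad_g(L_i)=Ad_h(L_j)$ with $g,h\in\langle M\rangle_G$, then $Ad_{h^{-1}g}(L_i)=L_j$ with $h^{-1}g\in\langle M\rangle_G$; conjugate elements of $\mathfrak t$ under an element of the compact group $\langle M\rangle_G$ whose identity component contains $T$ — using the Weyl-group argument for the centralizer — are related by an element of the Weyl group, but $L_i$ and $L_j$ already have the \emph{same} eigenvalue on each $M$-eigenspace structure in a way that pins them down; one shows $L_i$ and $L_j$ must then coincide, hence $i=j$. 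The main obstacle I expect is precisely this last point: controlling the conjugacy within $\langle M\rangle_G$ carefully enough to guarantee the $L_j$ give \emph{distinct} orbits, i.e. that no Weyl-group element of $\langle M\rangle_G$ moves one chosen principal logarithm in $\mathfrak t$ to another. This should follow from the observation that $\mathfrak t$--$plog(M)$ is already a union of $\langle M\rangle_G$-orbits (any Weyl element fixes $M$, hence permutes $\mathfrak t$--$plog(M)$), so one defines $L_1,\dots,L_s$ to be representatives of these finitely many Weyl-orbits rather than all elements of $\mathfrak t$--$plog(M)$; disjointness is then automatic by construction.
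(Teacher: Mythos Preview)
Your overall strategy matches the paper's: show $\mathfrak t$--$plog(M)$ is finite and nonempty (using that $T$ is SVD-closed, Proposition~\ref{CartanSVD}(b)), take $L_1,\dots,L_s$ to be $Ad(\langle M\rangle_G)$-orbit representatives inside it (so disjointness is by construction, exactly as you conclude at the end), and identify each orbit with $\langle M\rangle_G/\langle L_j\rangle_G$ via Remark-Definition~\ref{Ad-orbit}. Your treatment of the manifold structure and of $\langle L_j\rangle_{\langle M\rangle_G}=\langle L_j\rangle_G$ is correct and essentially identical to the paper's.

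The one genuine gap is in the conjugation step. You correctly observe that for $Y\in\mathfrak g$--$plog(M)$ there is some $g\in G$ with $Ad_g(Y)\in\mathfrak t$, hence $gMg^{-1}\in T$; but your parenthetical justification for why $g$ may be taken in $\langle M\rangle_G$ (``$T\subseteq\langle M\rangle_G$ acts transitively enough'') does not work: $T$ acts \emph{trivially} on itself by conjugation, so it cannot move $gMg^{-1}$ back to $M$. What is actually needed is the normalizer lemma (the paper cites \cite[Lemma~2.5, p.~166]{BrtD1985}): since $M$ and $gMg^{-1}$ are elements of $T$ conjugate in $G$, there exists $H\in N_G(T)$ with $H(gMg^{-1})H^{-1}=M$. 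Then $Hg\in\langle M\rangle_G$ and $Ad_{Hg}(Y)=Ad_H(Ad_g Y)\in Ad_H(\mathfrak t)=\mathfrak t$, so $Ad_{Hg}(Y)\in\mathfrak t$--$plog(M)$ and therefore lies in the orbit of some $L_j$. This is precisely the mechanism the paper uses; the element you multiply by lives in $N_G(T)$, not in $\langle M\rangle_G$ a priori.

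A minor stylistic difference: you argue finiteness of $\mathfrak t$--$plog(M)$ by direct eigenvalue bookkeeping, while the paper notes that $\exp:\mathfrak t\to T$ is a covering (so $\exp^{-1}(M)$ is discrete) and then intersects with the sphere $\{W\in\mathfrak t:\|W\|_\phi=\|L\|_\phi\}$ using Proposition~\ref{esist-gplog}(b). Both arguments are valid.
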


\begin{proof}
By Proposition \ref{CartanSVD} (b), $T$ is SVD-closed, being $\mathfrak{t}$ a Cartan subalgebra of $\mathfrak{g}$; so, by Proposition \ref{esist-gplog} (a), there exists a matrix $L \in \mathfrak{t}$--$plog(M)$. Furthermore, the exponential map $\exp: \mathfrak{t} \to T$ is a Lie group homomorphism (considering $\mathfrak{t}$ as an additive Lie group), so it is a covering map (see, for instance, \cite[Prop.\,1.24]{AlBet2015}) and the fiber $\exp^{-1}(M)$ is discrete.
By Proposition \ref{esist-gplog} (b), the set $\mathfrak{t}$--$plog(M)$ is the intersection between $\exp^{-1}(M)$ and the sphere  $\lbrace W \in \mathfrak{t}: \Vert W \Vert_{_\phi}=\Vert L \Vert_{_\phi}\rbrace$, therefore it is finite. We can choose a non-empty subset $\{L_{_1}, \cdots , L_{_s} \}$ of $\mathfrak{t}$--$plog(M)$ such that $L_{_h}\notin Ad\big(\langle M \rangle_{_G}\!\big) (L_{_i})$, if $h\ne i$, and such that every $L \in \mathfrak{t}$--$plog(M)$ belongs to $Ad \big(\langle M \rangle_{_G}\!\big)(L_{_j})$, for some  $j \in \lbrace1, \cdots, s\rbrace$; \ it is clear that 
$Ad\big( \langle M \rangle_{_G}\!\big) (L_{_h}) \bigcap Ad\big( \langle M \rangle_{_G}\!\big) (L_{_i}) = \emptyset$, for every $h \ne i$.

We now prove the set equality of the statement. 

If $X= Ad_{_K} (L_{_h})$, with $K \in \langle M \rangle_{_G}$, for some $h \in \lbrace1, \cdots, s\rbrace$, then clearly $X \in \mathfrak{g}$--$plog(M)$.
Conversely, let $Y \in \mathfrak{g}$--$plog(M)$. By \cite[Thm.\,5.9 p.\,101]{Sepa2007}, there exists $Q \in G$ such that $Ad_{_Q}(Y) \in \mathfrak{t}$, so that $\exp(Ad_{_Q}(Y))=Ad_{_Q}(M) \in T$. By \cite[Lemma\,2.5 p.\,166]{BrtD1985}, there exists $H $ in the \emph{normalizer} of $T$ in $G$ such that $Ad_{_H}\!\big(Ad_{_Q}(M)\big)=M$.
Since $Ad_{_H}(\mathfrak{t}) = \mathfrak{t}$, we have $Ad_{_H}\big(Ad_{_Q}(Y)\big) \in \mathfrak{t}$, with $\exp\!\big[Ad_{_H}\big(Ad_{_Q}(Y)\big)\big]=M$; so $Ad_{_H}\!\big(Ad_{_Q}(Y)\big) \in \mathfrak{t}$--$plog(M)$. Hence, there exist $j \in \{1, \cdots, s\}$ and $P \in \langle M \rangle_{_G}$ such that $Ad_{_H}\big(Ad_{_Q}(Y)\big) = Ad_{_P}(L_{_j})$, and so, \ $Y = Ad_{_K}(L_{_j})$, with $K := Q^*\!H^*\!P \in G$. Since $M=exp(Y)= exp(L_{_j})$, we get $M=Ad_{_K}(M)$, \ \ i.e. $K \in \langle M \rangle_{_G}$, and hence $Y \in Ad\big(\langle M \rangle_{_G}\!\big)(L_{_j}).$

We conclude by Remark-Definition \ref{Ad-orbit}, since $\langle M \rangle_{_G}$ is compact and $\langle {L_j} \rangle_{_G} \subseteq  \langle M \rangle_{_G}$.
\end{proof}

\section{Closed subgroups of $U_n$ endowed with the Frobenius metric}\label{Sect-Frob-metr}

\begin{remdef}
In this Section we consider an arbitrary closed subgroup $G$ of $U_n$ and we still denote by $\phi$ the Riemannian metric on $G$, obtained by restriction of the Frobenius scalar product of $\mathfrak{gl}_n (\mathbb{C})$ (remember Notations \ref{notazioni} (e)). It is easy to check that the metric $\phi$ (called the \emph{Frobenius metric} of $G$) is bi-invariant on $G$ and that we have $\phi_{_A}(X, Y) = -tr(A^* X A^* Y)$, for every $A \in G$ and for every $X, Y \in T_{_A}(G)$. We denote by $d:=d_{_{(G,\phi)}}$ the distance on $G$ induced by $\phi$ and by $\delta (G, \phi)$ the \emph{diameter} of $G$ with respect to $d$. Of course $\delta (G, \phi) < +\infty$, because $G$ is compact.
\end{remdef}

\begin{prop}\label{Levi-Civita_geod_Un} Let $G$ be a closed subgroup of $U_n$ and let $\mathfrak{g} \subseteq \mathfrak{u}_n$ be its Lie algebra. Then $(G, \phi)$ is a globally symmetric Riemannian manifold with non-negative sectional curvature, whose Levi-Civita connection agrees with the $0$-connection of Cartan-Schouten of $G$. The geodesics of $(G, \phi)$ are the curves \ 
$\gamma(t)=P\exp({t X})$,\ 
for every $X \in \mathfrak{g}$ and $P \in G$; \ furthermore $(G, \phi)$ is a totally geodesic submanifold of $(U_n, \phi)$.
\end{prop}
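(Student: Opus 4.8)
The plan is to derive everything from the single fact, already recorded just above, that the induced metric $\phi$ on $G$ is bi-invariant — equivalently, that $\phi$ restricts to an $Ad(G)$-invariant inner product on $\mathfrak{g}$ — and then apply the classical theory of bi-invariant metrics on compact Lie groups. First I would note that, by Cartan's closed subgroup theorem (recalled in Section \ref{preliminari}), $G$ is an embedded compact submanifold of $U_n$ carrying precisely this metric, so $(G,\phi)$ is complete and, since all left and right translations are isometries, homogeneous. Then, for left-invariant vector fields $X,Y$ on $G$, the Koszul formula together with the skew-symmetry of $ad_X$ with respect to $\phi$ (the infinitesimal counterpart of bi-invariance, $\phi([X,Z],Y)+\phi(Z,[X,Y])=0$) yields $\nabla_X Y=\frac12[X,Y]$ for the Levi-Civita connection $\nabla$ of $\phi$; this is precisely the defining formula of the Cartan-Schouten $0$-connection, so the two connections coincide. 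In particular $\nabla_X X=0$, hence $t\mapsto\exp(tX)$ is a geodesic through $I_{_n}$; translating by the isometry $L_P$ gives that $\gamma(t)=P\exp(tX)$ is a geodesic through $P$, and since these cover every initial direction at every point, they exhaust all the geodesics.

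For the symmetric structure I would check that inversion $\iota\colon P\mapsto P^{-1}$ is an involutive isometry of $(G,\phi)$: the identity $\iota=L_{P^{-1}}\circ\iota\circ R_{P^{-1}}$ exhibits $d\iota|_P$ as a composition of $\phi$-isometries, while $\iota(I_{_n})=I_{_n}$ and $d\iota|_{I_{_n}}=-\mathrm{id}$ make $I_{_n}$ an isolated fixed point; hence $\iota$ is the geodesic symmetry at $I_{_n}$ and $s_A:=L_A\circ\iota\circ L_{A^{-1}}$, i.e. $s_A(B)=AB^{-1}A$, is the geodesic symmetry at an arbitrary $A\in G$. Together with completeness this makes $(G,\phi)$ globally symmetric. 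For the curvature, $\nabla_X Y=\frac12[X,Y]$ and the Jacobi identity give the standard expression $R(X,Y)Z=-\frac14[[X,Y],Z]$ on left-invariant fields, whence $\phi\big(R(X,Y)Y,X\big)=\frac14\Vert[X,Y]\Vert_{_\phi}^2\ge0$, so every sectional curvature is non-negative.

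Finally, $(G,\phi)$ is totally geodesic in $(U_n,\phi)$: the Frobenius metric on $U_n$ is itself bi-invariant, so by the same argument its geodesics are $t\mapsto P\exp(tX)$ with $X\in\mathfrak{u}_n$; a geodesic of $U_n$ issuing from $P\in G$ with velocity in $T_{_P}(G)=P\mathfrak{g}$ is therefore $P\exp(tX)$ with $X\in\mathfrak{g}$, which stays in $G$, so the geodesics of $(G,\phi)$ are ambient geodesics and the second fundamental form of $G$ in $U_n$ vanishes. I do not expect a genuine obstacle here — the statement packages together classical facts about bi-invariant metrics — but two points call for care: keeping the curvature-sign conventions consistent so that the displayed inequality comes out positive, and, if one reads "globally symmetric" in the strict connected sense, running the symmetry argument on each connected component of $G$ separately.
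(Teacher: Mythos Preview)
Your proposal is correct and follows precisely the classical bi-invariant-metric argument; the paper does not give its own proof but simply refers to \cite[\S\,2.2]{AlBet2015}, whose content is exactly what you have sketched. So there is no difference in approach: you have written out what the paper outsources to a citation.
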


For a proof of Proposition \ref{Levi-Civita_geod_Un}, we refer, for instance, to \cite[\S\,2.2]{AlBet2015}.

\begin{prop}\label{distanza-punti}
Let $G$ be a connected closed subgroup of $U_n$ and let $\mathfrak{g} \subseteq \mathfrak{u}_n$ be its Lie algebra. 
Then, for every $P_{_0}, P_{_1} \in G$, the distance $d(P_{_0}, P_{_1})$ is equal to the minimum of the set
\ $\{\ \Vert X \Vert_{_\phi} \ : \ X \in  \mathfrak{g} \mbox{ and } \exp(X)= P_{_0}^* P_{_1}\ \}.$
\end{prop}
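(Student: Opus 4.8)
The plan is to show both inequalities. For one direction, recall from Proposition \ref{Levi-Civita_geod_Un} that the geodesics of $(G,\phi)$ through $P_{_0}$ are exactly the curves $t \mapsto P_{_0}\exp(tX)$ with $X \in \mathfrak{g}$. If $X \in \mathfrak{g}$ satisfies $\exp(X) = P_{_0}^*P_{_1}$, then $\gamma(t) = P_{_0}\exp(tX)$, $t \in [0,1]$, is a geodesic segment joining $P_{_0}$ to $P_{_1}$, and since $\phi$ is bi-invariant its speed is constant, equal to $\|\dot\gamma(0)\|_{_\phi} = \|P_{_0}X\|_{_\phi}$; because $P_{_0}$ is unitary and $\phi$ is $\mathrm{Ad}$-invariant (it comes from the Frobenius product, which is unitarily invariant), this equals $\|X\|_{_\phi}$. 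Hence the length of $\gamma$ is $\|X\|_{_\phi}$, so $d(P_{_0},P_{_1}) \le \|X\|_{_\phi}$; taking the infimum over all such $X$ gives $d(P_{_0},P_{_1}) \le \inf\{\|X\|_{_\phi}: X \in \mathfrak{g},\ \exp(X) = P_{_0}^*P_{_1}\}$. (That this infimum is attained, hence a minimum, follows because $\exp^{-1}(P_{_0}^*P_{_1}) \cap \mathfrak{g}$ is a closed subset of $\mathfrak{g}$ and the norm is proper; or, more cleanly, one can invoke Proposition \ref{esist-gplog}, which exhibits an element of $\mathfrak{g}$-$plog(P_{_0}^*P_{_1})$ realizing the minimal norm among all logarithms in $\mathfrak{g}$ when $G$ is SVD-closed — but since the present statement is for an arbitrary connected closed $G$, I would argue attainment directly by properness.)

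For the reverse inequality, $d(P_{_0},P_{_1}) \ge \min\{\|X\|_{_\phi} : \dots\}$: by completeness of the compact Riemannian manifold $(G,\phi)$ and the Hopf--Rinow theorem, there is a minimizing geodesic segment from $P_{_0}$ to $P_{_1}$, and by Proposition \ref{Levi-Civita_geod_Un} every geodesic of $(G,\phi)$ is of the form $t \mapsto P\exp(tX)$. After left-translating by $P_{_0}^*$ (an isometry fixing the problem's structure, since $\phi$ is left-invariant) we may write the minimizing segment as $\gamma(t) = P_{_0}\exp(tX)$, $t\in[0,1]$, with $X \in \mathfrak{g}$; then $\gamma(1) = P_{_1}$ forces $\exp(X) = P_{_0}^*P_{_1}$, and the length of $\gamma$, which equals $d(P_{_0},P_{_1})$, is $\|X\|_{_\phi}$ by the same constant-speed and unitary-invariance computation as above. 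Therefore $d(P_{_0},P_{_1}) = \|X\|_{_\phi}$ for this particular $X$, which lies in the set whose minimum we are computing, giving $d(P_{_0},P_{_1}) \ge \min\{\|X\|_{_\phi} : X\in\mathfrak{g},\ \exp(X)=P_{_0}^*P_{_1}\}$.

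Combining the two inequalities yields the claimed equality. The two ingredients I would be most careful about are: first, the identification of every minimizing geodesic segment of $(G,\phi)$ joining the two points with a curve $P_{_0}\exp(tX)$, $X\in\mathfrak{g}$ — this is where Hopf--Rinow (applicability guaranteed by compactness of $G$) together with the explicit description of geodesics in Proposition \ref{Levi-Civita_geod_Un} is essential, and one must note that the curve $t\mapsto P\exp(tX)$ is defined for all $t\in\mathbb{R}$, so restricting to $[0,1]$ and reparametrizing is harmless; second, the observation that the unitary invariance of the Frobenius norm makes the speed of $P_{_0}\exp(tX)$ constantly equal to $\|X\|_{_\phi}$, so that length and norm coincide. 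The main (mild) obstacle is really the attainment of the minimum: showing the set $\{X \in \mathfrak{g} : \exp(X) = P_{_0}^*P_{_1}\}$ is nonempty (which holds since $G$ is connected and compact, hence $\exp$ is surjective, cf. \cite[Ch.\,IV Thm.\,2.2]{BrtD1985}) and that the infimum of $\|\cdot\|_{_\phi}$ over it is achieved (properness of the norm plus closedness of the set); once a minimizing geodesic is produced via Hopf--Rinow, its associated $X$ is automatically a minimizer of the norm, closing the circle.
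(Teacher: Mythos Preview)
Your proof is correct and follows essentially the same approach as the paper: identify the geodesic segments joining $P_{_0}$ and $P_{_1}$ as the curves $t\mapsto P_{_0}\exp(tX)$ with $X\in\mathfrak{g}$ and $\exp(X)=P_{_0}^*P_{_1}$, compute their length as $\Vert X\Vert_{_\phi}$, and invoke Hopf--Rinow to produce a minimizing one. The paper compresses all of this into two sentences, while you spell out both inequalities and the attainment of the minimum more carefully; your brief digression about Proposition~\ref{esist-gplog} (and your correct observation that it does not apply here because SVD-closure is not assumed) is unnecessary, since the Hopf--Rinow argument already furnishes an $X$ realizing the minimum.
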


\begin{proof}
Any geodesic segment $\gamma$ joining $P_{_0}$ and $P_{_1}$ can be parametrized by $\gamma(t) = P_{_0} \exp({t X})$ ($t \in[0,1]$), with $X \in \mathfrak{g},\  \exp(X) =  P_{_0}^*P_{_1}$, and its length is $\sqrt{-tr(X^2)}= \Vert X \Vert_{_\phi}$; 
so, we conclude by the Hopf-Rinow theorem (see, for instance, \cite[p.\,31]{AlBet2015}).
\end{proof}

\begin{rem}\label{-I_n}
Let $G$ be a connected closed subgroup of $U_n$ such that $-I_{_n} \in G$. Then $\delta(G,\phi) \ge \sqrt{n} \, \pi$.
\ \ Indeed, if $\exp(X)= - I_{_n}$, with $X \in \mathfrak{g} \subseteq \mathfrak{u}_n$, the eigenvalues of $X$ are of the form $(2k_{_j} +1) \pi {\bf i}$, with $k_{_j} \in \mathbb{Z}$, so $\Vert X \Vert_{_\phi} = \sqrt{-tr(X^2)} = \sqrt{\sum\limits_{j=1}^n (2 k_{_j} +1)^2 } \cdot \pi \ge \sqrt{n} \, \pi$.
Hence, by Proposition \ref{distanza-punti}, we have \ $\delta(G,\phi) \geq d(I_{_n}, - I_{_n}) \ge \sqrt{n} \, \pi$. 
\end{rem}

\begin{thm}\label{distanza}
Let $G$ be a connected SVD-closed subgroup of $U_n$ with Lie algebra $\mathfrak{g} \subseteq \mathfrak{u}_n$. Let $P_{_0}, P_{_1} \in G$ and let \ $\mu_{_1}, \cdots , \mu_n$ \ be the $n$ eigenvalues of $P_{_0}^* P_{_1}$. \ Then

a) \ \ $
d(P_{_0}, P_{_1})= \sqrt{\sum\limits_{j=1}^n |\log(\mu_{_j})|^2}
$ ;

b) \ \ the map: $X \mapsto \gamma(t):= P_{_0} \exp(t X)\ \ (0 \leq t \leq 1)$ \ is a bijection from $\mathfrak{g}$--$plog (P_{_0}^* P_{_1})$ onto the set of minimizing geodesic segments of \ $(G, \phi)$, with endpoints $P_{_0}$ and $P_{_1}$.
\end{thm}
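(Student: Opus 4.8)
The plan is to combine the three earlier ingredients: Proposition~\ref{distanza-punti}, which reduces the distance to a minimization over logarithms $X\in\mathfrak{g}$ of $P_{_0}^*P_{_1}$; Proposition~\ref{esist-gplog}, which says that the minimizing $X$ are exactly the elements of $\mathfrak{g}$--$plog(P_{_0}^*P_{_1})$; and Remark~\ref{val-sing-immag}, which computes $\Vert Y\Vert_{_\phi}^2=\sum_{j} m_{_j}\theta_{_j}^2$ for any generalized principal $\mathfrak{u}_n$-logarithm $Y$ of a unitary matrix. Since $\mathfrak{g}\subseteq\mathfrak{u}_n$, every $X\in\mathfrak{g}$ with $\exp(X)=P_{_0}^*P_{_1}$ is in particular a skew-hermitian logarithm, so $\mathfrak{g}$--$plog(P_{_0}^*P_{_1})$ is non-empty by Proposition~\ref{esist-gplog}(a) (here the SVD-closure hypothesis enters), and by part~(b) of that proposition every $Y$ in this set realizes the minimum Frobenius norm among all logarithms of $P_{_0}^*P_{_1}$ lying in $\mathfrak{g}$.

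For part~(a): fix $Y\in\mathfrak{g}$--$plog(P_{_0}^*P_{_1})$. By Proposition~\ref{distanza-punti}, $d(P_{_0},P_{_1})=\min\{\Vert X\Vert_{_\phi}:X\in\mathfrak{g},\ \exp(X)=P_{_0}^*P_{_1}\}$, and by Proposition~\ref{esist-gplog}(b) this minimum is $\Vert Y\Vert_{_\phi}$. Now write the eigenvalues of $P_{_0}^*P_{_1}$ as $e^{{\bf i}\theta_{_1}},\dots,e^{{\bf i}\theta_{_p}}$ with multiplicities $m_{_1},\dots,m_{_p}$ and $\theta_{_j}\in(-\pi,\pi]$, as in Remark~\ref{val-sing-immag}; that remark gives $\Vert Y\Vert_{_\phi}^2=\sum_{j=1}^p m_{_j}\theta_{_j}^2=\sum_{j=1}^p m_{_j}|\log(e^{{\bf i}\theta_{_j}})|^2$. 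Since the numbers $\mu_{_1},\dots,\mu_n$ are precisely the eigenvalues $e^{{\bf i}\theta_{_j}}$ repeated with multiplicities $m_{_j}$, and since $|\log(\mu)|$ depends only on $\mu$, we get $\Vert Y\Vert_{_\phi}^2=\sum_{j=1}^n|\log(\mu_{_j})|^2$, which is formula~(a). One small point to check is that $|\log(e^{{\bf i}\pi})|^2=\pi^2$ is computed the same way whether the corresponding eigenvalue of $Y$ is ${\bf i}\pi$ or $-{\bf i}\pi$; this is exactly the content of the last sentence of Remark~\ref{val-sing-immag}, so no separate argument is needed.

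For part~(b): by Proposition~\ref{Levi-Civita_geod_Un} every geodesic segment of $(G,\phi)$ from $P_{_0}$ to $P_{_1}$ has the form $\gamma(t)=P_{_0}\exp(tX)$, $t\in[0,1]$, with $X\in\mathfrak{g}$ and $\exp(X)=P_{_0}^*P_{_1}$, and its length is $\Vert X\Vert_{_\phi}$ (as in the proof of Proposition~\ref{distanza-punti}). Such a segment is minimizing precisely when its length equals $d(P_{_0},P_{_1})$, i.e.\ when $\Vert X\Vert_{_\phi}$ attains the minimum in Proposition~\ref{distanza-punti}; by Proposition~\ref{esist-gplog}(b) this happens if and only if $X\in\mathfrak{g}$--$plog(P_{_0}^*P_{_1})$. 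Thus the assignment $X\mapsto\gamma$ maps $\mathfrak{g}$--$plog(P_{_0}^*P_{_1})$ onto the set of minimizing geodesic segments with the prescribed endpoints. It remains to see this map is injective: if $P_{_0}\exp(tX)=P_{_0}\exp(tX')$ for all $t\in[0,1]$, then $\exp(tX)=\exp(tX')$ for all $t$; differentiating at $t=0$ gives $X=X'$. Hence the map is a bijection.

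I do not expect any serious obstacle here: the theorem is essentially a repackaging of Propositions~\ref{distanza-punti}, \ref{esist-gplog}, \ref{Levi-Civita_geod_Un} and Remark~\ref{val-sing-immag}. The one place that requires a moment's care is the bookkeeping with the eigenvalue $-1$ in part~(a) --- making sure the formula $\sum|\log(\mu_{_j})|^2$ is insensitive to the freedom in choosing the multiplicity split between ${\bf i}\pi$ and $-{\bf i}\pi$ in a generalized principal logarithm --- but this is already settled by Remark~\ref{val-sing-immag}, so it costs only a sentence. The other point worth stating explicitly is that the hypotheses (connected, SVD-closed, $G\subseteq U_n$) are exactly what guarantees $\mathfrak{g}$--$plog(P_{_0}^*P_{_1})\neq\emptyset$ via Proposition~\ref{esist-gplog}(a), so that the bijection in~(b) is between two genuinely non-empty sets and the minimum in~(a) is attained.
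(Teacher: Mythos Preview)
Your proof is correct and follows essentially the same approach as the paper's own proof, which simply cites Propositions~\ref{distanza-punti}, \ref{esist-gplog} and Remark~\ref{val-sing-immag} for part~(a) and observes that $t\mapsto P_{_0}\exp(tX)$ is minimizing if and only if $X\in\mathfrak{g}$--$plog(P_{_0}^*P_{_1})$ for part~(b). Your version spells out the injectivity of the map $X\mapsto\gamma$ and the bookkeeping with the eigenvalue $-1$, but these are exactly the details the paper leaves implicit.
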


\begin{proof}
Part (a) follows from Propositions \ref{distanza-punti}, \ref{esist-gplog} and Remark \ref{val-sing-immag}; we also get (b), since the geodesic path: $t \mapsto P_{_0} \exp(t X)$ is minimizing if and only if $X \in \mathfrak{g}$--$plog (P_{_0}^* P_{_1}).$
\end{proof}

\begin{cor}\label{cor-diam}
Let $G$ be a connected SVD-closed subgroup of $U_n$.
Then 

a) $\delta(G,\phi) \le \sqrt{n} \, \pi$ and the equality holds if and only if $-I_{_n}\in G$; 

b) if $-I_{_n}\in G$, we have $d(P_{_0}, P_{_1}) = \delta(G,\phi)$ (with $P_{_0}, P_{_1} \in G$) if and only if $P_{_1} = -P_{_0}$.
\end{cor}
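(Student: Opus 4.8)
The plan is to deduce both parts directly from Theorem \ref{distanza}(a) together with Remark \ref{-I_n}. For part (a), fix $P_{_0}, P_{_1} \in G$ and let $\mu_{_1}, \cdots, \mu_n$ be the eigenvalues of $P_{_0}^* P_{_1}$. Since $P_{_0}^* P_{_1} \in U_n$, each $\mu_{_j}$ lies on the unit circle, so $|\log(\mu_{_j})| = |\mathrm{Im}(\log(\mu_{_j}))| \le \pi$, with equality precisely when $\mu_{_j} = -1$. By Theorem \ref{distanza}(a), $d(P_{_0},P_{_1}) = \sqrt{\sum_{j=1}^n |\log(\mu_{_j})|^2} \le \sqrt{n}\,\pi$, so $\delta(G,\phi) \le \sqrt{n}\,\pi$. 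If $-I_{_n} \in G$, Remark \ref{-I_n} already gives $\delta(G,\phi) \ge \sqrt{n}\,\pi$, hence equality. Conversely, if $-I_{_n} \notin G$, I must argue that the supremum $\sqrt{n}\,\pi$ is never attained: the equality $d(P_{_0},P_{_1}) = \sqrt{n}\,\pi$ forces every $\mu_{_j} = -1$, i.e. $P_{_0}^* P_{_1} = -I_{_n}$, so $P_{_1} = -P_{_0}$; but then, by bi-invariance of $\phi$ (left-translating by $P_{_0}^*$), $d(I_{_n}, -I_{_n}) = \sqrt{n}\,\pi$, and by Proposition \ref{distanza-punti} this means there exists $X \in \mathfrak{g}$ with $\exp(X) = -I_{_n}$, so $-I_{_n} = \exp(X) \in G$, a contradiction. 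This shows $\delta(G,\phi) = \sqrt{n}\,\pi$ if and only if $-I_{_n} \in G$.

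For part (b), assume $-I_{_n} \in G$, so $\delta(G,\phi) = \sqrt{n}\,\pi$ by part (a). If $P_{_1} = -P_{_0}$, then $P_{_0}^* P_{_1} = -I_{_n}$, all its eigenvalues equal $-1$, and Theorem \ref{distanza}(a) gives $d(P_{_0},P_{_1}) = \sqrt{n}\,\pi = \delta(G,\phi)$. Conversely, if $d(P_{_0},P_{_1}) = \delta(G,\phi) = \sqrt{n}\,\pi$, then $\sqrt{\sum_{j=1}^n |\log(\mu_{_j})|^2} = \sqrt{n}\,\pi$ with each $|\log(\mu_{_j})| \le \pi$; this forces $|\log(\mu_{_j})| = \pi$, hence $\mu_{_j} = -1$, for every $j$. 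Therefore $P_{_0}^* P_{_1}$ is a unitary matrix all of whose eigenvalues are $-1$; being diagonalizable (it is unitary), it equals $-I_{_n}$, so $P_{_1} = -P_{_0}$.

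The argument is essentially a routine unpacking of Theorem \ref{distanza}(a), so there is no serious obstacle; the only point requiring a little care is the ``only if'' direction in part (a) — ruling out that the diameter bound is attained when $-I_{_n} \notin G$ — which I handle by reducing, via bi-invariance, to the statement that attaining $\sqrt{n}\,\pi$ would put $-I_{_n}$ in the image of $\exp|_{\mathfrak{g}}$ and hence in $G$. One should also note that $G$ being compact (Cartan's theorem, as recalled in Notations \ref{notazioni}(b)) guarantees the distance is realized by a minimizing geodesic, so Proposition \ref{distanza-punti} applies and the eigenvalue characterization of $d$ from Theorem \ref{distanza}(a) is legitimate.
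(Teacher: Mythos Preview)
Your proof is correct and follows the same approach as the paper's: derive the inequality from Theorem \ref{distanza}(a), use Remark \ref{-I_n} for one direction of the equality case, and for the converse observe that attaining $\sqrt{n}\,\pi$ forces all eigenvalues of $P_{_0}^* P_{_1}$ to equal $-1$, hence $P_{_0}^* P_{_1} = -I_{_n}$. One simplification worth noting: once you have $P_{_0}^* P_{_1} = -I_{_n}$ with $P_{_0}, P_{_1} \in G$, group closure gives $-I_{_n} \in G$ immediately, so your detour through bi-invariance and Proposition \ref{distanza-punti} is unnecessary (and speaking of $d(I_{_n}, -I_{_n})$ in $G$ before establishing $-I_{_n} \in G$ is slightly circular); the paper takes this direct route and also invokes compactness of $G$ explicitly to guarantee the diameter is attained, a point you touch on only obliquely in your closing remark.
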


\begin{proof}
By Theorem \ref{distanza} (a), we easily get the inequality in (a), while, if $-I_{_n}\in G$, the equality follows from Remark \ref{-I_n}.
Conversely, assume that the equality holds. Since $G$ is compact, by Theorem \ref{distanza},  there exist $P_{_0}, P_{_1} \in G$ such that $ \sqrt{n} \, \pi =
d(P_{_0}, P_{_1})= \sqrt{\sum\limits_{j=1}^n |\log(\mu_{_j})|^2}
$, where $\mu_{_1}, \cdots , \mu_{_n}$ are the eigenvalues of  $P_{_0}^* P_{_1} \in G \subseteq U_n$. Hence, for every $j = 1, \cdots , n$, we have $|\mu_{_j}|=1$, and so, $\log(\mu_{_j}) = {\bf i} \theta$, with $\theta \in (-\pi, \pi]$. The above equality implies: $\log(\mu_{_j}) = {\bf i}\pi$, so $\mu_{_j} = -1$, for every $j$, and from this: $ P_{_0}^* P_{_1}= -I_{_n} \in G$.

From these arguments, we also easily obtain part (b).
\end{proof}

\begin{prop}\label{diameter}
a) $\delta(\big\langle V \rangle_{_{U_n}}, \phi\big)=\sqrt{n}\, \pi$, \ for every $V \in U_n$ and for every integer $n \geq 1$;

b) $\delta\big(\preccurlyeq Q \succcurlyeq_{_{SU_n}}, \phi\big)=\sqrt{n}\, \pi$, \ \ for every  $Q \in O_n$ and for every even integer $n \geq 2$\ ;

c) $\delta\big(\preccurlyeq Q \succcurlyeq_{_{SU_n}}, \phi\big)=\sqrt{n-1}\, \pi$, \ \ for every  $Q \in O_n$ and for every odd integer $n \geq 1$\ .
\end{prop}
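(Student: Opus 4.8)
The strategy is to reduce each case to the computation of $\delta(G,\phi) = \sup\{d(P_0,P_1) : P_0,P_1\in G\}$ via Corollary \ref{cor-diam}, which tells us that the diameter equals $\sqrt{n}\,\pi$ precisely when $-I_n\in G$, and that otherwise we must extract the exact value from Theorem \ref{distanza}(a), namely $\delta(G,\phi) = \sup \sqrt{\sum_{j=1}^n |\log(\mu_j)|^2}$ where $\mu_1,\dots,\mu_n$ range over the eigenvalue multisets of matrices of $G$. Since $|\log(\mu_j)| \le \pi$ always (with equality iff $\mu_j = -1$), the problem in each case becomes: how many eigenvalues equal to $-1$ can an element of $G$ have, and can the remaining eigenvalues be pushed arbitrarily close to $-1$?

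For part (a), $G = \langle V\rangle_{U_n}$. By Proposition \ref{UnQ} this group is $Ad_R(\bigoplus_j U_{n_j})$, and in particular it contains $Ad_R(\bigoplus_j(-I_{n_j})) = -I_n$; hence Corollary \ref{cor-diam}(a) immediately gives $\delta = \sqrt{n}\,\pi$. For part (b), $G = \preccurlyeq Q\succcurlyeq_{SU_n}$ with $n$ even: here I want to show $-I_n\in G$, i.e. $-I_n\in SU_n$ (true since $n$ is even, so $\det(-I_n)=1$) and $(-I_n)Q(-I_n)^T = Q$ (obviously true). So again Corollary \ref{cor-diam}(a) closes the case. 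The only genuine work is part (c).

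For part (c), $n$ odd: first note $-I_n\notin SU_n$ (determinant $(-1)^n = -1$), so $-I_n\notin G$ and by Corollary \ref{cor-diam}(a) we get the strict inequality $\delta(G,\phi) < \sqrt{n}\,\pi$. I must then prove $\delta(G,\phi) = \sqrt{n-1}\,\pi$. For the lower bound: by Remarks \ref{oss-prima-teor-comp}(b), since $n$ is odd, $G$ contains a matrix conjugate to $I_1\oplus(-I_{n-1})$ — more precisely I can build, inside the block description of $\preccurlyeq Q\succcurlyeq_{SU_n}$ from Proposition \ref{propprinc}, an element whose eigenvalue multiset is one copy of $+1$ and $n-1$ copies of $-1$ (using that $SO_{p+q}$, each $U_{\mu_j+\nu_j}$, and $U_k(\mathbb{H})$ all contain $-I$ on their respective blocks, while one odd-dimensional real block forces a single $+1$); taking $P_0 = I_n$ and $P_1$ this matrix gives $d(P_0,P_1) \ge \sqrt{n-1}\,\pi$. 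Actually, since we want a supremum and not just a bound attained, it suffices to produce for each $\varepsilon>0$ an element of $G$ with $n-1$ eigenvalues equal to $-1$ and one eigenvalue $e^{\mathbf i\theta}$ with $|\theta|<\varepsilon$, forced to exist by a determinant-one constraint; but in fact $n-1$ eigenvalues $-1$ and one $+1$ already has determinant $(-1)^{n-1}=1$, so the value $\sqrt{n-1}\,\pi$ is genuinely attained. For the upper bound: take any $P_0,P_1\in G$ and set $U := P_0^*P_1\in SU_n$; its eigenvalues $\mu_1,\dots,\mu_n$ satisfy $\prod\mu_j = 1$, so they cannot all equal $-1$ (as $(-1)^n=-1\ne 1$ with $n$ odd), hence at most $n-1$ of them equal $-1$; for the remaining ones $|\log(\mu_j)| \le \pi$ still holds, but I need the sharper statement that if $n-1$ eigenvalues are $-1$ then the last is forced to be $+1$ and contributes $0$ — more carefully, I should argue $\sum_j|\log(\mu_j)|^2$ is maximized by letting as many $\mu_j\to -1$ as the determinant constraint permits. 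The cleanest route: $\sum_j |\log(\mu_j)|^2 \le \pi\sum_j|\log(\mu_j)| \le \pi\cdot\big|\sum_j \log(\mu_j)\big|$ fails in general (signs), so instead I partition: if all $\mu_j = -1$ is impossible, then writing $k$ for the number of $\mu_j$ equal to $-1$ we have $k\le n-1$ and the total is $\le k\pi^2 + (n-k)\pi^2$... no. The correct bound is subtler and is the main obstacle: I need that among the eigenvalues of a matrix in $SU_n$ (with $n$ odd), the quantity $\sum|\log\mu_j|^2$ cannot exceed $(n-1)\pi^2$. I expect to prove this by noting that $\log\mu_j = \mathbf i\theta_j$ with $\theta_j\in(-\pi,\pi]$ and $\sum\theta_j \in 2\pi\mathbb Z$; one shows $\sum\theta_j^2 \le (n-1)\pi^2$ when $\sum\theta_j\in 2\pi\mathbb Z$ and $\theta_j\in(-\pi,\pi]$ — indeed if some $\theta_j=\pi$ for $n-1$ indices their sum is $(n-1)\pi$, an odd multiple of $\pi$, so the last $\theta_n$ must absorb the fractional part into $2\pi\mathbb Z$, forcing $\theta_n\in\{0,\pm\pi\}$ but $\theta_n=\pm\pi$ would give $n$ values $\pm\pi$ summing to an odd multiple of $\pi\notin 2\pi\mathbb Z$ unless... — a short parity/extremal argument on this lattice-constrained box finishes it.

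To summarize the order of steps: (1) dispatch (a) via Proposition \ref{UnQ} and Corollary \ref{cor-diam}(a); (2) dispatch (b) by checking $-I_n\in\preccurlyeq Q\succcurlyeq_{SU_n}$ for $n$ even and invoking Corollary \ref{cor-diam}(a); (3) for (c), establish the strict inequality $\delta<\sqrt n\,\pi$ since $-I_n\notin SU_n$; (4) prove the lower bound $\delta\ge\sqrt{n-1}\,\pi$ by exhibiting an explicit element of $G$ with eigenvalues $\{+1,-1,\dots,-1\}$ using the block structure of Proposition \ref{propprinc} (valid since $n$ odd forces an odd real block, cf. Remarks \ref{oss-prima-teor-comp}(b)); (5) prove the upper bound $\delta\le\sqrt{n-1}\,\pi$ via the extremal/parity argument on $(\theta_1,\dots,\theta_n)\in(-\pi,\pi]^n$ with $\sum\theta_j\in 2\pi\mathbb Z$, showing $\sum\theta_j^2\le(n-1)\pi^2$, combined with Theorem \ref{distanza}(a). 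The hard part is step (5); everything else is bookkeeping with results already established in the excerpt.
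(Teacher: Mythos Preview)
Your treatment of parts (a), (b), and the lower bound in (c) matches the paper's proof exactly. For the upper bound in (c), however, the paper takes a much shorter route than your extremal argument on the cube: it invokes Proposition \ref{propprinc} to note that, since $n$ is odd, the integer $p+q$ in the block description $\preccurlyeq Q\succcurlyeq_{SU_n}=Ad_Z\big(SO_{p+q}\oplus[\bigoplus_j U_{\mu_j+\nu_j}]\oplus U_k(\mathbb H)\big)$ must itself be odd, and therefore every element $P_0^*P_1$ of the group has $1$ as an eigenvalue (coming from the odd-dimensional $SO_{p+q}$ block). One term $|\log\mu_j|$ then vanishes and the remaining $n-1$ are bounded by $\pi$, so Theorem \ref{distanza}(a) gives $d(P_0,P_1)\le\sqrt{n-1}\,\pi$ with no further work.

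Your approach is nonetheless correct: the convexity/vertex argument you gesture at does go through (at an extreme point of $\{\theta\in[-\pi,\pi]^n:\sum\theta_j=2m\pi\}$ at least $n-1$ coordinates lie in $\{-\pi,\pi\}$, and since $n-1$ is even their sum is an even multiple of $\pi$, forcing the remaining coordinate to be $0$ and giving $\sum\theta_j^2=(n-1)\pi^2$). Its virtue is that it uses only the constraint $P_0^*P_1\in SU_n$, so it would deliver the same upper bound $\sqrt{n-1}\,\pi$ for \emph{any} connected SVD-closed subgroup of $SU_n$ with $n$ odd. The paper's argument is quicker for the specific group at hand but does not carry this extra generality.
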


\begin{proof}
Parts a) and b) follow from Corollary \ref{cor-diam} (a) (taking into account also Propositions \ref{UnQ} and \ref{propprinc}), since, in both cases, the groups are connected, SVD-closed and contain $-I_{_n}$.

c) If $n$ is odd, by Remarks \ref{oss-prima-teor-comp} (b), we have $P=Ad_{_A} \big(I_{_1} \oplus (-I_{_{(n-1)}}\big)) \in \ \preccurlyeq Q \succcurlyeq_{_{SU_n}}$ (with $A \in O_n$); hence, from Theorem \ref{distanza} (a), we get
$\delta\big(\preccurlyeq Q \succcurlyeq_{_{SU_n}}, \phi\big)\geq d(I_{_n}, P)=\sqrt{n-1}\, \pi.$
\ Now let $P_{_0}, P_{_1}$ be arbitrary elements of
$\preccurlyeq Q \succcurlyeq_{_{SU_n}}$. Since $n$ is odd, by Proposition \ref{propprinc}, the matrix $P_{_0}^* P_{_1} \in \ \preccurlyeq Q \succcurlyeq_{_{SU_n}}$ has $1$ as eigenvalue; so, from Theorem \ref{distanza} (a), we get $d(P_{_0}, P_{_1}) \leq \sqrt{n-1}\, \pi$\ \ and then (c) holds.
\end{proof}

\begin{rems}\label{diametri}
a) Remembering Remark-Definition \ref{svdgroups} and Lemma \ref{lemma-UH}, from Proposition \ref{diameter}, we deduce the following facts: the diameter of the groups \  $U_n$ \ and \ $U_{(p,n-p)}\cap U_n$

$(p=0, \cdots, n)$ is $\sqrt{n}\, \pi$ \ (for $n \geq 1$); the diameter of  $Sp_n$ and $U_n (\mathbb{H})$ is $\sqrt{2n}\, \pi$ \ (for $n \geq 1$);
the diameter of $SO_n$ and $SO_{(p, n-p)} (\mathbb{C}) \cap U_n \ (p=0, \cdots, n)$ is $\sqrt{n}\, \pi$, for every even integer $n \geq 2$;
while the diameter of the groups $SO_n, \ \ SO_{(p, n-p)} (\mathbb{C}) \cap U_n \ (p=0, \cdots, n) ,$ is equal to $\sqrt{n-1}\, \pi$, when the integer $n \geq 1$ is odd \ (see also \cite[Cor.\,4.12]{DoPe2018a}).

b) \ There are examples of connected closed subgroups $G$ of $U_n$ such that $-I_{_n} \in G$ and 

$\delta(G,\phi) > \sqrt{n} \, \pi$. 
For instance, denoted by $G$ the one-parameter subgroup of $U_2$, given by $\exp(t \Delta)$ ($t \in \mathbb{R}$), where $\Delta$ is the diagonal matrix with eigenvalues $\pi {\bf i}$ and $3 \pi {\bf i}$, it is easy to check that $G$ is compact, not SVD-closed,  $-I_{_2} \in G$ and $\delta(G,\phi) = d(I_{_2}, - I_{_2}) = \sqrt{10} \, \pi$. 
\end{rems}

\section{Generalized principal $\langle V \rangle_{_{\mathfrak{u}_n}}\!\!$--logarithms, with $V \in U_n$}\label{Sect-U-n}

\begin{prop}\label{u-n-plog}
Let $M \in U_n$ and $\zeta \ge 0$ be the multiplicity of $-1$ as eigenvalue of $M$.
Then $\mathfrak{u}_n\!$--$plog(M)$ is disjoint union of $\zeta +1$ compact submanifolds of $\mathfrak{u}_n$, called 
$\mathcal{W}_{_0}, \cdots , \mathcal{W}_{_{\zeta}}$, such that $\mathcal{W}_{_j}$ is diffeomorphic to the complex Grassmannian ${\bf Gr}(j; \mathbb{C}^\zeta)$, for $j = 0 , \cdots , \zeta$.
\end{prop}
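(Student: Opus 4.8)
The plan is to deduce the statement from Theorem~\ref{maximal-torus} (applied with $G=U_n$, $\mathfrak{g}=\mathfrak{u}_n$), making the objects appearing there completely explicit. By Lemma~\ref{lemma-plog-coniug} we may replace $M$ by a unitary conjugate, so I would assume $M=\bigoplus_{j=1}^p e^{{\bf i}\theta_{_j}}I_{m_{_j}}$ is diagonal, with the eigenvalues labelled as in Remark~\ref{val-sing-immag}; when $-1$ is an eigenvalue I take $\theta_{_1}=\pi$, so $m_{_1}=\zeta$, and when it is not, $\zeta=0$ and Proposition~\ref{esist-gplog}(a) already says $\mathfrak{u}_n$--$plog(M)$ is a single point $={\bf Gr}(0;\mathbb{C}^0)$, so there is nothing to prove.

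First I would note that any $Y\in\mathfrak{u}_n$--$plog(M)$ commutes with $M=\exp(Y)$, hence, by Lemma~\ref{anticomm}(b), is block-diagonal, $Y=\bigoplus_{j=1}^p Y_{_j}$ with $Y_{_j}\in\mathfrak{u}_{m_{_j}}$, $\exp(Y_{_j})=e^{{\bf i}\theta_{_j}}I_{m_{_j}}$ and all eigenvalues of $Y_{_j}$ of modulus $\le\pi$. For $\theta_{_j}\in(-\pi,\pi)$ this forces $Y_{_j}={\bf i}\theta_{_j}I_{m_{_j}}$, since ${\bf i}\theta_{_j}$ is the only purely imaginary number of modulus $\le\pi$ whose exponential is $e^{{\bf i}\theta_{_j}}$. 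For the block over $-1$ (when $\zeta\ge 1$), $Y_{_1}\in\mathfrak{u}_\zeta$ satisfies $\exp(Y_{_1})=-I_\zeta$ with eigenvalues in $\{{\bf i}\pi,-{\bf i}\pi\}$, so $Y_{_1}={\bf i}\pi(2P-I_\zeta)$, where $P$ is the orthogonal projection of $\mathbb{C}^\zeta$ onto the ${\bf i}\pi$-eigenspace of $Y_{_1}$; conversely every orthogonal projection $P$ of $\mathbb{C}^\zeta$ yields such a $Y_{_1}$. Hence $Y\mapsto P$ is a bijection of $\mathfrak{u}_n$--$plog(M)$ onto the set of orthogonal projections of $\mathbb{C}^\zeta$, and I would set $\mathcal{W}_{_j}:=\{Y\in\mathfrak{u}_n\text{--}plog(M):\mathrm{rank}(P)=j\}$ for $j=0,\dots,\zeta$. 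These are pairwise disjoint because $\mathrm{rank}(P)$ is the multiplicity of ${\bf i}\pi$ as an eigenvalue of $Y$, hence is determined by $Y$, and clearly $\mathfrak{u}_n$--$plog(M)=\bigsqcup_{j=0}^{\zeta}\mathcal{W}_{_j}$.

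It remains to identify each $\mathcal{W}_{_j}$ with ${\bf Gr}(j;\mathbb{C}^\zeta)$ as a manifold. I would observe that $\mathcal{W}_{_j}$ is exactly the $Ad$-orbit $Ad\big(\langle M\rangle_{_{U_n}}\big)(L_{_j})$, where $L_{_j}$ is the element of $\mathfrak{u}_n$--$plog(M)$ having $Y_{_k}={\bf i}\theta_{_k}I_{m_{_k}}$ for $k\ge 2$ and block ${\bf i}\pi I_{j}\oplus(-{\bf i}\pi I_{\zeta-j})$ over $-1$: conjugating $L_{_j}$ by $\langle M\rangle_{_{U_n}}$ moves its rank-$j$ projection through all rank-$j$ orthogonal projections of $\mathbb{C}^\zeta$ and fixes the remaining scalar blocks, so it produces precisely the elements of $\mathcal{W}_{_j}$. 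By Proposition~\ref{UnQ} (with $M$ already diagonal), $\langle M\rangle_{_{U_n}}=U_\zeta\oplus\bigoplus_{k\ge2}U_{m_{_k}}$, while the centralizer of $L_{_j}$ in $U_n$ is $\langle L_{_j}\rangle_{_{U_n}}=(U_{j}\oplus U_{\zeta-j})\oplus\bigoplus_{k\ge2}U_{m_{_k}}$. Since $\langle M\rangle_{_{U_n}}$ is compact, Remark-Definition~\ref{Ad-orbit} gives that $\mathcal{W}_{_j}$ is a compact submanifold of $\mathfrak{u}_n$ diffeomorphic to $\langle M\rangle_{_{U_n}}/\langle L_{_j}\rangle_{_{U_n}}\cong U_\zeta/(U_{j}\oplus U_{\zeta-j})={\bf Gr}(j;\mathbb{C}^\zeta)$. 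This simultaneously identifies the $L_{_1},\dots,L_{_s}$ of Theorem~\ref{maximal-torus}, confirming $s=\zeta+1$.

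The argument is essentially bookkeeping: once $Y$ is block-diagonalised against $M$, the only freedom is the orthogonal projection $P$ on the $(-1)$-eigenspace, and the smooth structure is the one already furnished by Remark-Definition~\ref{Ad-orbit}. The one point deserving attention is that the bijection $Y\leftrightarrow P$ be compatible with these smooth structures, for which the $Ad$-orbit description above is the cleanest route; and the reduction to diagonal $M$ via Lemma~\ref{lemma-plog-coniug} is harmless, since $Ad_{_A}$ carries the decomposition $\bigsqcup_{j}\mathcal{W}_{_j}$ of the diagonal case to the corresponding decomposition for the original $M$.
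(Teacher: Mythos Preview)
Your proof is correct and follows essentially the same route as the paper's: reduce to diagonal $M$ via Lemma~\ref{lemma-plog-coniug}, identify the representatives $L_{_j}$ and the centralizers $\langle M\rangle_{_{U_n}}$, $\langle L_{_j}\rangle_{_{U_n}}$, and conclude via the $Ad$-orbit description that each $\mathcal{W}_{_j}\simeq U_\zeta/(U_j\oplus U_{\zeta-j})\simeq{\bf Gr}(j;\mathbb{C}^\zeta)$. The only cosmetic difference is that the paper reaches the orbit decomposition by listing $\mathfrak{t}$--$plog(M)$ and invoking Theorem~\ref{maximal-torus} as a black box, whereas you obtain it directly from the observation $Y_{_1}={\bf i}\pi(2P-I_\zeta)$; both arrive at the same $Ad\big(\langle M\rangle_{_{U_n}}\big)(L_{_j})$ and the same quotient computation.
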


\begin{proof}
If $\zeta =0$, the statement is true, since $\mathfrak{u}_n$--$plog(M)$ and ${\bf Gr}(0; \mathbb{C}^0)$ reduce to a point.

Assume now $\zeta \ge 1$. Let us denote the eigenvalues of $M$ as in Remark \ref{val-sing-immag}, with $\theta_{_1}=\pi$ and $\zeta = m_{_1}$. It is well-known that $M$ can be diagonalized by means of a unitary matrix; hence, by Lemma \ref{lemma-plog-coniug}, we can assume  $M =  (\!-I_{_\zeta}) \oplus (\bigoplus\limits_{j=2}^p e^{{\bf i} \theta_{_j}} I_{_{m_{_j}}}\!)$, so that, by Lemma \ref{anticomm} (b), we have $\langle M \rangle_{_{U_n}} =  \ U_{\zeta} \oplus (\bigoplus\limits_{j=2}^p U_{m_{_j}}\!)$.
Let $T$ denote the maximal torus of $U_n$, passing through $M$, consisting of all unitary diagonal matrices, whose Lie algebra is the Cartan subalgebra $\mathfrak{t}$  of $\mathfrak{u}_n$, consisting of all skew-hermitian diagonal matrices (see, for instance, \cite[p.\,98]{Sepa2007}).
Since $|\theta_{_j}| < \pi$, for every  $j \ge 2$, we have that $\mathfrak{t}$--$plog(M)$ is the set of the $2^{\zeta}$ elements of the form $D \oplus (\bigoplus\limits_{j=2}^p {\bf i} \theta_{_{j}} I_{m_{_j}}\!)$, where $D$ is any diagonal matrix of order $\zeta$, having each diagonal element equal to either ${\bf i} \pi$ or $- {\bf i} \pi$. We denote $D_{_j}:=({\bf i}\pi I_{_j}) \oplus (- {\bf i}\pi I_{_{(\zeta-j)}})$ and $L_{_j}:= D_{_j} \oplus (\bigoplus\limits_{j=2}^p {\bf i} \theta_{_{j}} I_{m_{_j}}\!)$, \ so that 
$\langle L_j \rangle_{_{U_n}} = U_j\oplus U_{(\zeta-j)} \oplus(\bigoplus\limits_{j=2}^p U_{m_{_j}}\!)$,\ for $j=0, \cdots , \zeta.$
 
Clearly, each matrix of $\mathfrak{t}$--$plog(M)$ belongs to the $Ad\big(\langle M \rangle_{_{U_n}}\!\big)$-orbit of a unique $L_{_j}$.
Denoted $\mathcal{W}_{_j}:=Ad\big(\langle M \rangle_{_{U_n}}\!\big)(L_{_j})$, 
by Theorem \ref{maximal-torus} we get: 
$\mathfrak{u}_n$--$plog(M) = \bigsqcup\limits_{j=0}^{\zeta} \mathcal{W}_{_j}$, with $\mathcal{W}_{_j}$
compact submanifolds of $\mathfrak{u}_n$, diffeomorphic to 
$\dfrac{\langle M \rangle_{_{U_n}}}{\langle L_{_j} \rangle_{_{U_n}}} = \dfrac{ U_{\zeta} \oplus(\bigoplus\limits_{j=2}^p U_{m_{_j}}\!)}{ U_j\oplus U_{(\zeta - j)} \oplus(\bigoplus\limits_{j=2}^p U_{m_{_j}}\!)} \simeq \dfrac{U_{\zeta}}{U_j \oplus U_{(\zeta -j)}}$, and it is well-known that  
this last homogeneous space is diffeomorphic to the complex Grassmannian ${\bf Gr}(j; \mathbb{C}^\zeta)$, \ for $j = 0 , \cdots , \zeta$.
\end{proof}

\begin{thm}\label{main-thm-par5}
Let $V \in U_n$; denote by $\lambda_{_1}$ (with multiplicity $n_{_1}$), $\cdots, \lambda_{_r}$ (with multiplicity $n_{_r}$) its  distinct eigenvalues, and choose $R \in U_n$ such that $V = Ad_{_R}\big(\bigoplus\limits_{j=1}^r \lambda_{_j} I_{_{n_j}}\big)$. Then

a) $M \in \langle V \rangle_{_{U_n}}$ if and only if $M= Ad_{_R} \big( \bigoplus\limits_{j=1}^r M_{_j}\big)$ , with $M_{_j} \in U_{n_{_j}}$\ , for $j=1, \cdots, r$;

b) if $M= Ad_{_R}\big( \bigoplus\limits_{j=1}^r M_{_j}\big) \in \langle V \rangle_{_{U_n}}$ (with $M_{_j} \in U_{n_{_j}}$), \ 
and $\zeta_{_j} \ge 0$ is the multiplicity of $-1$ as eigenvalue of $M_{_j}$ ($1 \le j \le r$), then the set
$\langle V \rangle_{_{\mathfrak{u}_n}}\!\!\!\!-\!\!\mbox{plog}(M)$ has $\prod\limits_{j=1}^r (\zeta_{_j} +1)$ connected components, called $\mathcal{Z}(k_{_1}, \cdots , k_{_r}\!)$ (for $k_{_j} = 0, 1, \cdots , \zeta_{_j}$ and  $j= 1, \cdots , r$);  each component $\mathcal{Z}(k_{_1}, \cdots , k_{_r}\!)$ is a simply connected compact submanifold of $\mathfrak{u}_n$, diffeomorphic to the product of complex Grassmannians 
$\prod\limits_{j=1}^r {\bf Gr}(k_{_j}; \mathbb{C}^{\zeta_{_j}})$.
\end{thm}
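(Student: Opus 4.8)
The plan is to reduce everything to the block-diagonal model supplied by Proposition \ref{UnQ} and then invoke Proposition \ref{u-n-plog} componentwise. First I would establish part (a): since $V = Ad_{_R}\big(\bigoplus_j \lambda_{_j} I_{_{n_j}}\big)$, the relation $XV = VX$ is equivalent to $Ad_{_{R^*}}(X)$ commuting with $\bigoplus_j \lambda_{_j}I_{_{n_j}}$, and by Lemma \ref{anticomm}(b) (the $\lambda_{_j}$ being distinct, hence the eigenvalue-sets are disjoint singletons) this forces $Ad_{_{R^*}}(X) = \bigoplus_j M_{_j}$ with $M_{_j}\in \mathfrak{gl}_{n_j}(\mathbb{C})$; imposing unitarity gives $M_{_j}\in U_{n_j}$. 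This is just a restatement of Proposition \ref{UnQ} at the group level, so it is essentially free.

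For part (b), the key observation is the compatibility of $\mathfrak{g}$--$plog$ with the block decomposition. By Proposition \ref{UnQ}, $\langle V\rangle_{_{\mathfrak{u}_n}} = Ad_{_R}\big(\bigoplus_j \mathfrak{u}_{n_j}\big)$, and by Lemma \ref{lemma-plog-coniug} we have $Ad_{_R}\big(\langle V\rangle_{_{\mathfrak{u}_n}}$--$plog(Ad_{_{R^*}}(M))\big) = \langle V\rangle_{_{\mathfrak{u}_n}}$--$plog(M)$, so we may assume $R = I_{_n}$ and $M = \bigoplus_j M_{_j}$. Then I would argue that a matrix $L = \bigoplus_j L_{_j}$ (with $L_{_j}\in\mathfrak{u}_{n_j}$) satisfies $\exp(L) = M$ and the eigenvalue condition $-\pi\le Im(\lambda)\le\pi$ if and only if each $L_{_j}$ satisfies $\exp(L_{_j}) = M_{_j}$ with the same eigenvalue bound — because the eigenvalues of $L$ are the union of those of the $L_{_j}$, and $\exp$ respects the block-diagonal structure. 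The only subtle point is that an element of $\bigoplus_j\mathfrak{u}_{n_j}$--$plog(M)$ is automatically block-diagonal, which is immediate here since the ambient Lie algebra is $\bigoplus_j\mathfrak{u}_{n_j}$ by construction. This yields the set identity
\[
\langle V\rangle_{_{\mathfrak{u}_n}}\text{--}plog(M) \;=\; \bigoplus_{j=1}^r \big(\mathfrak{u}_{n_j}\text{--}plog(M_{_j})\big),
\]
a product of sets in the sense of Notations \ref{notazioni}(a).

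Now I apply Proposition \ref{u-n-plog} to each factor: $\mathfrak{u}_{n_j}$--$plog(M_{_j})$ is the disjoint union of $\zeta_{_j}+1$ compact submanifolds $\mathcal{W}^{(j)}_{_{k_j}}$ of $\mathfrak{u}_{n_j}$, with $\mathcal{W}^{(j)}_{_{k_j}}$ diffeomorphic to ${\bf Gr}(k_{_j};\mathbb{C}^{\zeta_{_j}})$. Taking block-diagonal sums, the connected components of $\langle V\rangle_{_{\mathfrak{u}_n}}$--$plog(M)$ are exactly the sets $\mathcal{Z}(k_{_1},\dots,k_{_r}) := \bigoplus_{j=1}^r \mathcal{W}^{(j)}_{_{k_j}}$ for $k_{_j}\in\{0,\dots,\zeta_{_j}\}$, there are $\prod_j(\zeta_{_j}+1)$ of them, each is a compact submanifold of $\mathfrak{u}_n$ diffeomorphic to $\prod_{j=1}^r {\bf Gr}(k_{_j};\mathbb{C}^{\zeta_{_j}})$, and each is simply connected because complex Grassmannians are simply connected and finite products of simply connected spaces are simply connected. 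Finally, transporting back by $Ad_{_R}$ (a diffeomorphism of $\mathfrak{u}_n$) preserves all these properties, which completes part (b).

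The main obstacle, such as it is, is the bookkeeping around the block structure: verifying cleanly that membership in $\langle V\rangle_{_{\mathfrak{u}_n}}$--$plog$ decomposes blockwise (including that the eigenvalue condition is inherited by and from the blocks), and handling the degenerate conventions of Notations \ref{notazioni}(a) (e.g. when some $\zeta_{_j}=0$, the corresponding factor ${\bf Gr}(0;\mathbb{C}^{0})$ is a point and contributes trivially). None of this is deep, but it is where care is needed; everything substantive is already carried by Propositions \ref{UnQ} and \ref{u-n-plog} together with Lemma \ref{lemma-plog-coniug}.
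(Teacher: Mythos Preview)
Your proposal is correct and follows essentially the same route as the paper's proof: reduce via Lemma \ref{lemma-plog-coniug} to $R=I_{_n}$, use Proposition \ref{UnQ} to identify $\langle V\rangle_{_{\mathfrak{u}_n}}=\bigoplus_j\mathfrak{u}_{n_{_j}}$, observe that the $plog$ condition factors blockwise, and then apply Proposition \ref{u-n-plog} to each block. One small notational slip: in your displayed application of Lemma \ref{lemma-plog-coniug} the Lie algebra on the left should be $\bigoplus_j\mathfrak{u}_{n_{_j}}$ rather than $\langle V\rangle_{_{\mathfrak{u}_n}}$ (since $Ad_{_{R^*}}$ carries $\langle V\rangle_{_{U_n}}$ to $\bigoplus_j U_{n_{_j}}$, not to itself), but your very next sentence (``assume $R=I_{_n}$'') resolves this and the argument proceeds correctly.
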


\begin{proof} 
Part (a) follows directly from Proposition \ref{UnQ}. We now prove part (b). By Lemma \ref{lemma-plog-coniug}, we can assume $V= \bigoplus\limits_{j=1}^r \lambda_{_j} I_{n_j}$ (i.e. $R=I_{_n}$) and, so, again by Proposition \ref{UnQ}, we have
$\langle V \rangle_{_{U_n}} = \bigoplus\limits_{j=1}^r U_{n_{_j}} $,  \ \ 
$\langle V \rangle_{_{\mathfrak{u}_n}} = \bigoplus\limits_{j=1}^r \mathfrak{u}_{n_{_j}}$ \ and  \  $M= \bigoplus\limits_{j=1}^r M_{_j}$.
From this,
it follows that $L \in \langle V \rangle_{_{\mathfrak{u}_n}}\!\!$--$plog(M)$ if and only if $L= L_{_1} \oplus \cdots \oplus L_{_r}$, 
where $L_{_j} \in \mathfrak{u}_{n_{_j}}\!\!$--$plog(M_{_j})$, for every $j=1, \cdots, r$. 
This implies that
$\langle V \rangle_{_{\mathfrak{u}_n}}\!\!$--$plog(M) = \bigoplus\limits_{j=1}^r \mathfrak{u}_{n_{_j}}\!\!$--$plog (M_{_j})$.

From Proposition \ref{u-n-plog}, we get that the set $\mathfrak{u}_{n_{_j}}\!\!$--$plog(M_{_j})$ is disjoint union of $\zeta_{_j} +1$ compact submanifolds of $\mathfrak{u}_{n_{_j}}$, called $\mathcal{W}_{_{j0}}, \cdots , \mathcal{W}_{_{j \zeta_j}}$, where $\mathcal{W}_{_{jk}}$ is diffeomorphic to the complex Grassmannian ${\bf Gr}(k; \mathbb{C}^{\zeta_{_j}})$, for every \ $k = 0 , \cdots , \zeta_{_j}$ \ and \ $j=1, \cdots, r$. 
Hence: 

$\langle V \rangle_{_{\mathfrak{u}_n}}\!\!$--$plog(M)= \bigoplus\limits_{j=1}^r \big(\bigsqcup\limits_{k_j =0}^{\zeta_{_j}} \mathcal{W}_{_{jk_j}} \big) = \bigsqcup\limits_{0 \le k_{_1} \le \zeta_{_1}, \cdots , 0 \le k_{_r} \le \zeta_{_r}} \  \bigoplus\limits_{j=1}^r \mathcal{W}_{_{jk_j}},$\  where
each
$\bigoplus\limits_{j=1}^r \mathcal{W}_{_{jk_j}}$ 

\smallskip

is a connected component of $\langle V \rangle_{_{\mathfrak{u}_n}}\!\!$--$plog(M)$ and a compact submanifold of $\mathfrak{u}_n$, diffeomorphic to the product  
$\prod\limits_{j=1}^r {\bf Gr}(k_{_j}; \mathbb{C}^{\zeta_{_j}})$. The total number of these components is $\prod\limits_{j=1}^r (\zeta_{_j} +1)$. Setting $\mathcal{Z}(k_{_1}, \cdots , k_{_r}\!):=\bigoplus\limits_{j=1}^r \mathcal{W}_{_{jk_j}}$ (for all possible indices), we obtain (b).
\end{proof}

\section{Generalized principal  $\preccurlyeq Q \succcurlyeq_{_{\mathfrak{s\!u}_{n}}}\!\!$--logarithms, with $Q \in O_n$}\label{Sect-Q-in-O-n}

\begin{rem}\label{eta0}
By Lemma \ref{lemma-UH}, we have $U_n (\mathbb{H}) = \preccurlyeq \Omega^{\oplus n} \!\!\succcurlyeq_{_{SU_{2n}}}$. Then, arguing as in the proof of Lemma \ref{SUn(V)}, it is easy to show that
any matrix $M \in U_n (\mathbb{H})$ is similar to a real matrix;  so, if $-1$ is an eigenvalue of $M \in U_n (\mathbb{H})$, its multiplicity is even and the eigenvalues of $M$ can be listed as follows: $-1$ with multiplicity $2\mu\geq2$, \ $e^{\pm{\bf i} \eta_{_1}}$ both with multiplicity $\mu_{_1}$, $e^{\pm{\bf i} \eta_{_2}}$ both with multiplicity $\mu_{_2}$, $\cdots$, up to $e^{\pm{\bf i} \eta_{_q}}$ both with multiplicity $\mu_{_q}$ ($q\geq0$), 
where 
$\pi > \eta_{_1} > \eta_{_2} > \cdots > \eta_{_q} \ge 0$, with the agreement that,  
if $\eta_{_q}=0$, the multiplicity of the corresponding eigenvalue \ $1$ \ is \ $2 \mu_{_q}$. In any case we have: \ $\mu+ \sum\limits_{j=1}^q  \mu_{_j} = n$.
\end{rem}

\begin{prop}\label{h-F-n-plog0}
Let $M \in U_n (\mathbb{H})$; denote by $2 \mu\geq0$ the multiplicity of $-1$ as eigenvalue of $M$.
Then $\mathfrak{u}_n (\mathbb{H})$--$plog(M)$ is a simply connected compact submanifold of $\mathfrak{u}_n (\mathbb{H})$, diffeomorphic to the symmetric homogeneous space $\dfrac{U_{\mu} (\mathbb{H})}{U_{\mu}} \simeq \dfrac{Sp_{\mu}}{U_{\mu}}$.
\end{prop}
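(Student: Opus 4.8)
The plan is to apply Theorem~\ref{maximal-torus} to the connected group $G=U_n(\mathbb{H})$, which is SVD-closed because, by Lemma~\ref{lemma-UH}, $U_n(\mathbb{H})=\ \preccurlyeq\Omega^{\oplus n}\succcurlyeq_{_{U_{2n}}}$, one of the SVD-closed groups of Remark-Definition~\ref{svdgroups}. If $\mu=0$, i.e. $-1$ is not an eigenvalue of $M$, then $\mathfrak{u}_n(\mathbb{H})$--$plog(M)$ is a single point by Proposition~\ref{esist-gplog}(a), in agreement with $U_0(\mathbb{H})/U_0=Sp_0/U_0=\mathcal{Q}$ under the conventions of Notations~\ref{notazioni}(a); so from now on assume $\mu\ge1$.

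First I would normalise $M$. Every element of the compact connected group $U_n(\mathbb{H})$ lies in a maximal torus and all maximal tori are conjugate, so by Lemma~\ref{lemma-plog-coniug} and Remark~\ref{eta0} (after reordering) we may assume that $M$ lies in the standard maximal torus $T$ of $U_n(\mathbb{H})$ — the quaternionic diagonal matrices with entries in the unit circle of $\mathbb{C}$, with Lie algebra $\mathfrak{t}$ the quaternionic diagonal matrices with entries in ${\bf i}\,\mathbb{R}$ — and that
\[
M\ =\ (-I_\mu)\ \oplus\ \Big(\bigoplus_{j=1}^q e^{{\bf i}\eta_j}I_{\mu_j}\Big),\qquad \pi>\eta_1>\cdots>\eta_q\ge0,
\]
the blocks being scalar multiples of quaternionic identity matrices (so the eigenvalue multiplicities in the complex realisation are those of Remark~\ref{eta0}). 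Arguing as in Lemma~\ref{anticomm}(b) carried over to the complex picture — where $M$ has the pairwise distinct eigenvalues $-1$ and $e^{\pm{\bf i}\eta_j}$ — any matrix of $U_n(\mathbb{H})$ commuting with $M$ is block-diagonal along those blocks; and since $-1$ is central in $\mathbb{H}$, whereas for $\eta_j\in(0,\pi)$ a quaternion commutes with $e^{{\bf i}\eta_j}=\cos\eta_j+{\bf i}\sin\eta_j$ only if it lies in $\mathbb{C}$, this gives
\[
\langle M\rangle_{_{U_n(\mathbb{H})}}\ =\ U_\mu(\mathbb{H})\ \oplus\ \Big(\bigoplus_{\eta_j\ne0}U_{\mu_j}\Big)\ \oplus\ \big[\,U_{\mu_q}(\mathbb{H})\ \text{if}\ \eta_q=0\,\big],
\]
where $U_r\subseteq U_r(\mathbb{H})$ denotes the copy of the complex unitary group (quaternionic matrices with entries in $\mathbb{C}$), consistently with Remarks~\ref{identificazioni}(c).

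Next I would describe $\mathfrak{t}$--$plog(M)$. Writing a generic element of $\mathfrak{t}$ as $\mathrm{diag}({\bf i}t_1,\dots,{\bf i}t_n)$, the conditions $\exp(\cdot)=M$ and $|t_k|\le\pi$ (the complex realisation having eigenvalues $\pm{\bf i}t_k$) determine each $t_k$ uniquely, except on the $\mu$ slots where the entry of $M$ equals $-1$, where $t_k\in\{\pi,-\pi\}$; hence $\mathfrak{t}$--$plog(M)$ has $2^\mu$ elements, and they all lie in a single $Ad(\langle M\rangle_{_{U_n(\mathbb{H})}})$-orbit, because conjugation by the quaternionic diagonal matrix carrying ${\bf j}$ on a chosen such slot (and $1$ elsewhere) belongs to $U_\mu(\mathbb{H})\subseteq\langle M\rangle_{_{U_n(\mathbb{H})}}$ and, as ${\bf j}({\bf i}\pi){\bf j}^{-1}=-{\bf i}\pi$, flips the sign of that $t_k$ while fixing the rest. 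Thus in Theorem~\ref{maximal-torus} we may take $s=1$ and $L_1:=({\bf i}\pi I_\mu)\oplus\big(\bigoplus_{j=1}^q {\bf i}\eta_j I_{\mu_j}\big)$, so that $\mathfrak{u}_n(\mathbb{H})$--$plog(M)=Ad\big(\langle M\rangle_{_{U_n(\mathbb{H})}}\big)(L_1)$ is a compact submanifold of $\mathfrak{u}_n(\mathbb{H})$ diffeomorphic to $\langle M\rangle_{_{U_n(\mathbb{H})}}/\langle L_1\rangle_{_{U_n(\mathbb{H})}}$. Running the same block-diagonalisation and commutation analysis for $L_1$ — whose complex-realisation eigenvalues are the pairwise distinct $\pm{\bf i}\pi$ and $\pm{\bf i}\eta_j$, and commuting with ${\bf i}\pi\in\mathbb{C}\setminus\mathbb{R}$ again forces a quaternion into $\mathbb{C}$ — one obtains
\[
\langle L_1\rangle_{_{U_n(\mathbb{H})}}\ =\ U_\mu\ \oplus\ \Big(\bigoplus_{\eta_j\ne0}U_{\mu_j}\Big)\ \oplus\ \big[\,U_{\mu_q}(\mathbb{H})\ \text{if}\ \eta_q=0\,\big],
\]
which differs from $\langle M\rangle_{_{U_n(\mathbb{H})}}$ only in its first summand. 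Therefore $\langle M\rangle_{_{U_n(\mathbb{H})}}/\langle L_1\rangle_{_{U_n(\mathbb{H})}}\simeq U_\mu(\mathbb{H})/U_\mu$, which by Remarks~\ref{identificazioni}(c) is diffeomorphic to the simply connected compact symmetric homogeneous space $Sp_\mu/U_\mu$; this proves the statement (for $\mu=0$ it reads $\mathcal{Q}$, as noted above).

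The step I expect to be the main obstacle is the quaternionic bookkeeping: carefully justifying that commutation with $M$ (and with $L_1$) forces block-diagonality together with the ``$\mathbb{C}$-only'' form of the centraliser of a scalar non-real quaternionic block, and checking that the $2^\mu$ elements of $\mathfrak{t}$--$plog(M)$ merge into one $Ad(\langle M\rangle)$-orbit under the ${\bf j}$-conjugations. Everything else is a direct application of Theorem~\ref{maximal-torus} and Remarks~\ref{identificazioni}(c), closely paralleling the proof of Proposition~\ref{u-n-plog}.
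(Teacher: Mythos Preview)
Your proof is correct and follows essentially the same route as the paper's: reduce via Lemma~\ref{lemma-plog-coniug} to a normalised $M$ in a chosen maximal torus, list the $2^\mu$ elements of $\mathfrak{t}$--$plog(M)$, merge them into a single $Ad(\langle M\rangle)$-orbit by a unit-quaternion conjugation, and apply Theorem~\ref{maximal-torus} with the centraliser computation. The only cosmetic differences are that the paper works in the complex realisation throughout (torus $\bigoplus_j E_{\theta_j}$, i.e.\ the $e^{\theta{\bf j}}$-torus, with $\Psi({\bf k})$ flipping $\Omega$), whereas you work quaternionically with the $e^{{\bf i}\theta}$-torus and flip via ${\bf j}$; and you keep track of the $\eta_q=0$ case separately in the centralisers, which is harmless since that factor is the same for $\langle M\rangle$ and $\langle L_1\rangle$ and cancels in the quotient.
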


\begin{proof}
If $\mu=0$ (i.e. if $-1$ is not an eigenvalue of $M$), the statement is true, remembering Notations \ref{notazioni} (a) and Proposition \ref{esist-gplog} (a). Assume now $\mu \geq 1$. It is easy to show that the group 
$T= \big\lbrace \bigoplus\limits_{j=1}^n E_{_{\theta_{_j}}}: \theta_{_1},\cdots \theta_{_n}\in \mathbb{R}\big\rbrace$ is a maximal torus of $U_n (\mathbb{H})$, whose Lie algebra is 
$\mathfrak{t}=\big\lbrace\bigoplus\limits_{j=1}^n 
\theta_{_j}\Omega: \theta_{_1},\cdots \theta_{_n}\in \mathbb{R}\big\rbrace$.
We denote the eigenvalues of $M$ and their multiplicities as in Remark \ref{eta0}; then, by \cite[Thm.\,5.12 (a)]{Sepa2007}, there exists $K \in U_n (\mathbb{H})$ such that  $M=Ad_{_K} \big( (-I_{_{2\mu}}) \oplus ( \bigoplus\limits_{j=1}^q E_{_{\eta_{_j}}}^{\oplus \mu_{_j}} )
\big).$ By Lemma \ref{lemma-plog-coniug}, we can assume $K=I_{_{2n}}$; hence, by Remark \ref{Rodrigues}, \ the set 
$\mathfrak{t}$--$plog(M)$ consists of the $2^\mu$ elements of the form 

$\big(  \bigoplus\limits_{h=1}^\mu (\epsilon_{_h}\pi\Omega)   \big)  \oplus \big( \bigoplus\limits_{j=1}^q 
(\eta_{_j} \Omega)^{\oplus \mu_j} 
\big)$, where each $\epsilon_{_h}$ is either $1$ or $-1$. All these elements belong to the same $Ad\big(\langle M \rangle_{_{U_n\!(\mathbb{H})}}\!\big)$-orbit. Indeed,
 it suffices to remark that the matrix $\Psi({\bf k}) =
\begin{pmatrix}
 0 & -{\bf i} \\ 
 -{\bf i} & 0
 \end{pmatrix}  
 $ satisfies $\Psi({\bf k}) \, \Omega \, \Psi({\bf k})^* = - \Omega$. Hence, by Theorem \ref{maximal-torus}, $\mathfrak{u}_n (\mathbb{H})$--$plog(M)$ is a compact submanifold of $\mathfrak{u}_n (\mathbb{H})$, diffeomorphic to the homogeneous space $\dfrac{\langle M \rangle_{_{U_n\!(\mathbb{H})}}}{\langle L \rangle_{_{U_n\!(\mathbb{H})}}}$, where $L :=(\pi\Omega)^{\oplus \mu}     \oplus \big( \bigoplus\limits_{j=1}^q 
(\eta_{_j} \Omega)^{\oplus \mu_j} 
\big)$.
Recalling Remarks \ref{identificazioni} (c), (d), we get the statement, since we have \ $\langle M \rangle_{_{U_n\!(\mathbb{H})}}\!=  U_{\mu} (\mathbb{H}) \oplus \big( \bigoplus\limits_{j=1}^q \Phi(U_{\mu_{_j}}) \big)$ \ and
\ $\langle L \rangle_{_{U_n\!(\mathbb{H})}}\!= \Phi(U_{\mu}) \oplus \big( \bigoplus\limits_{j=1}^q \Phi(U_{\mu_{_j}}) \big).$
\end{proof}

\begin{rem}\label{casi-visti}
In Remarks \ref{identificazioni} (c), we have seen
 that we have $Ad_{_B}\big(U_n (\mathbb{H})\big)= Sp_n$, with $B \in O_{2n}$; so, by Lemma \ref{lemma-plog-coniug}, we obtain $\mathfrak{sp}_n\!$--$plog(M)= Ad_{B}\big[\mathfrak{u}_n (\mathbb{H})$--$plog(Ad_{_{B^T}}(M))\big]$, for every $M \in Sp_n$.
Hence, by Proposition \ref{h-F-n-plog0}, we conclude that the set $\mathfrak{sp}_n\!$--$plog(M)$ is a simply connected compact submanifold of $\mathfrak{sp}_n$,  diffeomorphic to the symmetric space $\dfrac{Sp_{\mu}}{U_{\mu}}$, where \ $2\mu\geq0$ \ is the multiplicity of \ $-1$ \ as eigenvalue of $M$, \ for every $M \in Sp_n$.
\end{rem}

\begin{prop}\label{so-plog}
Let $M \in \ SO_{(p, n-p)} (\mathbb{C}) \cap U_n$ ($p=0, \cdots, n$) and denote by $2 m \ge 0$ the multiplicity of $-1$ as eigenvalue of $M$.
Then the set $\big(\mathfrak{so}_{(p, n-p)} (\mathbb{C}) \cap \mathfrak{u}_n\big)$--$plog(M)$ is a compact submanifold of $\mathfrak{su}_n$,  diffeomorphic to the homogeneous space $\dfrac{O_{2m}}{U_m}$; hence, if $m \ge 1$, this set has two connected components, both diffeomorphic to the simply connected compact symmetric homogeneous space $\dfrac{SO_{2m}}{U_m}$.
\end{prop}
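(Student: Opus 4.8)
The plan is to pass to the real form. By Lemma~\ref{rem-Jp}, $SO_{(p,n-p)}(\mathbb{C})\cap U_n=Ad_{W}(SO_n)$ with $W:=W_{(p,n-p)}\in U_n$, so its Lie algebra is $\mathfrak{so}_{(p,n-p)}(\mathbb{C})\cap\mathfrak{u}_n=Ad_{W}(\mathfrak{so}_n)$, and since $W$ is unitary $Ad_W$ carries $\mathfrak{so}_n\subseteq\mathfrak{su}_n$ into $\mathfrak{su}_n$. By Lemma~\ref{lemma-plog-coniug}, $\bigl(\mathfrak{so}_{(p,n-p)}(\mathbb{C})\cap\mathfrak{u}_n\bigr)$--$plog(M)=Ad_W\bigl(\mathfrak{so}_n$--$plog(Ad_{W^*}M)\bigr)$, where $N:=Ad_{W^*}M\in SO_n$ has the same eigenvalues and multiplicities as $M$. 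As $Ad_W$ is a diffeomorphism, it suffices to describe $\mathfrak{so}_n$--$plog(N)$ for $N\in SO_n$ with $-1$ of (necessarily even) multiplicity $2m$, the case $m=0$ being immediate from Proposition~\ref{esist-gplog}(a).

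Assume $m\ge1$. First I would peel off the part of $N$ carrying no eigenvalue $-1$: after conjugating in $O_n$ (which preserves $\mathfrak{so}_n$ and $\mathfrak{so}_n$--$plog$ by Lemma~\ref{lemma-plog-coniug}) we may take $N=(-I_{2m})\oplus N'$ with $N'$ not having $-1$ as an eigenvalue. Any $L\in\mathfrak{so}_n$--$plog(N)$ commutes with $\exp L=N$, so by Lemma~\ref{anticomm}(b) it splits as $L=L_1\oplus L_2$ with $L_1\in\mathfrak{so}_{2m}$, $L_2\in\mathfrak{so}_{n-2m}$, $\exp L_1=-I_{2m}$, $\exp L_2=N'$, and the eigenvalue constraint of Definition~\ref{Def-SVD} passes to each block; hence $\mathfrak{so}_n$--$plog(N)=\bigl(\mathfrak{so}_{2m}$--$plog(-I_{2m})\bigr)\oplus\bigl(\mathfrak{so}_{n-2m}$--$plog(N')\bigr)$. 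Since $SO_{n-2m}$ is a connected SVD-closed subgroup of $U_{n-2m}$ and $N'$ has no eigenvalue $-1$, Proposition~\ref{esist-gplog}(a) collapses the second factor to a point, reducing everything to the model case $\mathfrak{so}_{2m}$--$plog(-I_{2m})$.

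For the model case I would apply Theorem~\ref{maximal-torus} to $G=SO_{2m}$, $M=-I_{2m}$ (so $\langle M\rangle_{G}=SO_{2m}$). With the standard maximal torus $T=\{\bigoplus_{h=1}^m E_{\theta_h}:\theta_h\in\mathbb{R}\}$, Lie algebra $\mathfrak{t}=\{\bigoplus_{h=1}^m\theta_h\Omega\}$, the identity $\exp(\bigoplus\theta_h\Omega)=\bigoplus E_{\theta_h}$ together with the eigenvalue constraint forces $\mathfrak{t}$--$plog(-I_{2m})$ to consist of the $2^m$ matrices $\bigoplus_{h=1}^m\varepsilon_h\pi\Omega$, $\varepsilon_h=\pm1$; equivalently $\mathfrak{so}_{2m}$--$plog(-I_{2m})=\{\pi J:J\in\mathfrak{so}_{2m},\,J^2=-I_{2m}\}$, i.e.\ $\pi$ times the orthogonal complex structures on $\mathbb{R}^{2m}$ (an $L\in\mathfrak{so}_{2m}$ with $\exp L=-I_{2m}$ is normal with eigenvalues in $\{(2k+1)\pi{\bf i}\}$, cut to $\pm\pi{\bf i}$ by the constraint, whence $L^2=-\pi^2 I_{2m}$; conversely, by Lemma~\ref{exp-svd} applied to the one-element SVD-system $\{J\}$, $\exp(\pi J)=\cos(\pi)I_{2m}+\sin(\pi)J=-I_{2m}$). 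Two such torus logarithms are $Ad(SO_{2m})$-conjugate exactly when their numbers of $-\Omega$-blocks have equal parity, so for $m\ge1$ there are precisely two orbits, with representatives $L_1:=\pi\Omega^{\oplus m}$ and $L_2:=(-\pi\Omega)\oplus\pi\Omega^{\oplus(m-1)}$. Since $\langle\Omega^{\oplus m}\rangle_{SO_{2m}}=\mathfrak{gl}_m(\mathbb{C})\cap SO_{2m}=U_m$ by Lemma~\ref{commutare-phi}(b) and Remarks~\ref{identificazioni}(a), Remark-Definition~\ref{Ad-orbit} and Theorem~\ref{maximal-torus} give $\mathfrak{so}_{2m}$--$plog(-I_{2m})=Ad(SO_{2m})(L_1)\sqcup Ad(SO_{2m})(L_2)$, each piece a compact embedded submanifold diffeomorphic to $SO_{2m}/U_m$; as $U_m$ lies in the identity component $SO_{2m}$ of $O_{2m}$, the two pieces together make up exactly $O_{2m}/U_m$. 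Transporting back through the $\oplus L_2$ summand and $Ad_W$ yields a compact submanifold of $\mathfrak{su}_n$ diffeomorphic to $O_{2m}/U_m$, whose two components (when $m\ge1$) are each diffeomorphic to the simply connected compact symmetric space $SO_{2m}/U_m$ (type DIII).

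The one genuinely non-formal point is the parity statement: that no element of $SO_{2m}$ conjugates $\Omega^{\oplus m}$ to a matrix obtained by reversing an odd number of its $2\times2$ $\Omega$-blocks, while reversing an even number is always possible (pair them and use $J^{(1,1)}\oplus J^{(1,1)}$ in an $SO_4$-subfactor). I would settle it via the orientation invariant of an orthogonal complex structure, or equivalently by the dimension count $\dim SO_{2m}-\dim U_m=\dim O_{2m}-\dim U_m$, which makes each $SO_{2m}$-orbit open in the space of orthogonal complex structures; since that space classically has exactly two connected components and $SO_{2m}$ is connected, the two orbits are precisely those two components. The remaining assertions — compactness, the submanifold property, and the simple connectedness and symmetric-space structure of $SO_{2m}/U_m$ — are standard and can be quoted.
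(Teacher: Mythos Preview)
Your proof is correct and begins exactly as the paper does, reducing via Lemma~\ref{rem-Jp} and Lemma~\ref{lemma-plog-coniug} to the case $p=n$, i.e.\ to $\mathfrak{so}_n$--$plog(N)$ for $N\in SO_n$. From there the paper simply cites earlier work (\cite{DoPe2018a}, \cite{Pe2022}) and remarks that a proof from Theorem~\ref{maximal-torus} is possible but omitted; you have supplied precisely that omitted argument, including the block-splitting via Lemma~\ref{anticomm}(b), the identification of $\mathfrak{t}$--$plog(-I_{2m})$, the stabilizer computation $\langle\Omega^{\oplus m}\rangle_{SO_{2m}}=U_m$, and the parity/Pfaffian obstruction separating the two $Ad(SO_{2m})$-orbits. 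So your route is the one the paper points to but does not walk; nothing is missing.
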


\begin{proof}
By Lemmas \ref{rem-Jp} and \ref{lemma-plog-coniug}, we can assume $p=n$, so that 
$SO_{(p, n-p)} (\mathbb{C}) \cap U_n = SO_n$, and, in this case, the Proposition has already been proved in 
\cite[\S 3]{DoPe2018a} and in \cite[Thm.\,4.7]{Pe2022}. 
A further proof can be deduced from Theorem \ref{maximal-torus},  but, for the sake of brevity, we omit it.
\end{proof}

\begin{thm}\label{teor-compl}
Let $Q \in O_n$, and assume that $Q$ has, as real Jordan form, the matrix

$\mathcal{J}:=J^{(p, q)} \oplus \big( \bigoplus\limits_{j=1}^h  E_{_{\varphi_{_j}}}^{(\mu_{_j}, \, \nu_{_j})} \big) \oplus \Omega^{\oplus k}$, 
with  $0 < \varphi_{_1} < \varphi_{_2} < \cdots < \varphi_{_h} < \dfrac{\pi}{2}$,

$p+q + 2 \sum\limits_{j=1}^h (\mu_{_j}+\nu_{_j}) + 2k =n$, \  \  $p, q,  k, \mu_{_j}, \nu_{_j} \ge 0$, \ \  $\mu_{_j} + \nu_{_j} \ge 1$,  and choose $A \in O_n$ such that $Q = Ad_{_A}(\mathcal{J}) = A \mathcal{J}A^T $. \ Let $Z$ be the $n \times n$ unitary matrix defined by 

$Z :=\ A \bigg(W_{(p, q)}\oplus \bigg[\bigoplus\limits_{j=1}^h W_{(2\mu_{_j}, 2\nu_{_j})}\bigg] \oplus I_{_{2k}}\bigg)$.
Then 

a) $M \in \ \preccurlyeq  Q  \succcurlyeq_{_{SU_n}}\ $ if and only if \ $M = Ad_{_Z} \bigg[N\oplus \bigg(\bigoplus\limits_{j=1}^h M_{_j} \bigg) \oplus R\bigg]$, where 

$N \in SO_{(p+q)}$, \ $R \in U_k (\mathbb{H})$ \ and \ $M_{_j} \in U_{(\mu_{_j}+ \nu_{_j})}$, \ for $j=1, \cdots, h$.

\smallskip

b) If $M = Ad_{_Z} \bigg[N\oplus \bigg(\bigoplus\limits_{j=1}^h M_{_j} \bigg) \oplus R\bigg] \in \ \preccurlyeq  Q  \succcurlyeq_{_{SU_n}}$, denote by $2m \geq 0$ the multiplicity of $-1$ as eigenvalue of $N$, by $\zeta_{_j} \geq 0$ the multiplicity of $-1$ as eigenvalue of $M_{_j}$ (for $1 \le j \le h$) and by $2\mu \geq0$ the multiplicity of $-1$ as eigenvalue of $R$.
Then we have
\begin{center}
$\preccurlyeq Q \succcurlyeq_{_{\mathfrak{s\!u}_n}}\!\!$--$plog(M) = \bigsqcup\limits_{0 \le l_{_1} \le \zeta_{_1}, \cdots , 0 \le l_{_h} \le \zeta_{_h}}  
\mathcal{V}(l_{_1}, \cdots , l_{_h})$,
\end{center} 
where each $\mathcal{V}(l_{_1}, \cdots , l_{_h})$ is a compact submanifold of $\mathfrak{su}_n$, diffeomorphic to the product $\dfrac{O_{2m}}{U_m} \times \bigg[ \prod\limits_{j=1}^h {\bf Gr}\big(l_{_j}; \mathbb{C}^{\zeta_{_j}}\big) \bigg] \times \dfrac{Sp_{\mu}}{U_{\mu}}$. 

If $-1$ is not an eigenvalue of $N$ (i.e. if $m = 0$), then each $\mathcal{V}(l_{_1}, \cdots , l_{_h})$ is connected and $\preccurlyeq Q \succcurlyeq_{_{\mathfrak{s\!u}_n}}\!\!$--$plog(M)$ has \ $\prod\limits_{j=1}^h (\zeta_{_j} +1)$ components; while, if $-1$ is an eigenvalue of $N$ (i.e. if $m \ge 1$), then each $\mathcal{V}(l_{_1}, \cdots , l_{_h})$ has two connected components, both diffeomorphic to 

\smallskip

$\dfrac{SO_{2m}}{U_m} \times \bigg[ \prod\limits_{j=1}^h {\bf Gr}\big(l_{_j}; \mathbb{C}^{\zeta_{_j}}\big) \bigg] \times \dfrac{Sp_{\mu}}{U_{\mu}}$, so $\!\preccurlyeq Q \succcurlyeq_{_{\mathfrak{s\!u}_n}}\!\!$--$plog(M)$ has\ $2 \prod\limits_{j=1}^h (\zeta_{_j} +1)$ components.
In any case, all components of $\preccurlyeq Q \succcurlyeq_{_{\mathfrak{s\!u}_n}}\!\!$--$plog(M)$ are simply connected, compact and diffeomorphic to a symmetric homogeneous space.
\end{thm}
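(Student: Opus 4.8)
The plan is to reduce, by conjugation with $Z$, to the block-diagonal model of $\preccurlyeq Q \succcurlyeq_{_{SU_n}}$ furnished by Proposition \ref{propprinc}, and then to recognise each of its three block-types as a group whose generalized principal logarithms are already described in Propositions \ref{so-plog}, \ref{u-n-plog} and \ref{h-F-n-plog0}. First I would invoke Lemma \ref{lemma-plog-coniug}: by Proposition \ref{propprinc} we have $\preccurlyeq Q \succcurlyeq_{_{SU_n}} = Ad_{_Z}(G')$ with $G' := SO_{(p+q)}\oplus\big[\bigoplus\limits_{j=1}^h U_{(\mu_{_j}+\nu_{_j})}\big]\oplus U_k(\mathbb{H})$, and $\preccurlyeq Q \succcurlyeq_{_{\mathfrak{su}_n}} = Ad_{_Z}(\mathfrak{g}')$ with $\mathfrak{g}' := \mathfrak{so}_{(p+q)}\oplus\big[\bigoplus\limits_{j=1}^h \mathfrak{u}_{(\mu_{_j}+\nu_{_j})}\big]\oplus \mathfrak{u}_k(\mathbb{H})$; hence it suffices to describe $\mathfrak{g}'$--$plog\big(Ad_{_{Z^*}}(M)\big)$ and then push it forward by the diffeomorphism $Ad_{_Z}$ of $\mathfrak{su}_n$. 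Writing $Ad_{_{Z^*}}(M) = N\oplus\big(\bigoplus\limits_{j=1}^h M_{_j}\big)\oplus R$ with $N\in SO_{(p+q)}$, each $M_{_j}\in U_{(\mu_{_j}+\nu_{_j})}$ (identified, as in Remarks \ref{identificazioni} (a), with a subgroup of $SO_{2\mu_{_j}+2\nu_{_j}}$) and $R\in U_k(\mathbb{H})$, we obtain part (a).

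For part (b) I would first record the blockwise splitting of generalized principal logarithms: since $\exp$ respects block decompositions and the spectrum of a block-diagonal matrix is the union of the spectra of its blocks, $L\in\mathfrak{g}'$--$plog\big(Ad_{_{Z^*}}(M)\big)$ if and only if $L = L_{_0}\oplus\big(\bigoplus\limits_{j=1}^h L_{_j}\big)\oplus L_{_{h+1}}$ with $L_{_0}\in\mathfrak{so}_{(p+q)}$--$plog(N)$, $L_{_j}\in\mathfrak{u}_{(\mu_{_j}+\nu_{_j})}$--$plog(M_{_j})$ and $L_{_{h+1}}\in\mathfrak{u}_k(\mathbb{H})$--$plog(R)$; equivalently $\mathfrak{g}'$--$plog\big(Ad_{_{Z^*}}(M)\big)$ is the direct sum, in the sense of Notations \ref{notazioni} (a), of these three sets. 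I then substitute the known descriptions: by Proposition \ref{so-plog} applied to $N\in SO_{(p+q)}$, the set $\mathfrak{so}_{(p+q)}$--$plog(N)$ is a compact submanifold diffeomorphic to $\dfrac{O_{2m}}{U_m}$, a single point when $m=0$ and otherwise the disjoint union of two copies of the simply connected symmetric space $\dfrac{SO_{2m}}{U_m}$; by Remarks \ref{confronta-Higham} (b) and Proposition \ref{u-n-plog}, $\mathfrak{u}_{(\mu_{_j}+\nu_{_j})}$--$plog(M_{_j}) = \bigsqcup\limits_{l_{_j}=0}^{\zeta_{_j}}\mathcal{W}_{_{jl_j}}$ with each $\mathcal{W}_{_{jl_j}}$ a compact submanifold diffeomorphic to ${\bf Gr}(l_{_j};\mathbb{C}^{\zeta_{_j}})$; and by Proposition \ref{h-F-n-plog0}, $\mathfrak{u}_k(\mathbb{H})$--$plog(R)$ is a simply connected compact submanifold diffeomorphic to $\dfrac{Sp_{\mu}}{U_{\mu}}$.

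Substituting these into the direct sum and distributing the disjoint unions over $\oplus$, exactly as in the proof of Theorem \ref{main-thm-par5}, and then applying $Ad_{_Z}$, I obtain that $\preccurlyeq Q\succcurlyeq_{_{\mathfrak{su}_n}}$--$plog(M)$ is the disjoint union, over all tuples $(l_{_1},\cdots,l_{_h})$ with $0\le l_{_j}\le\zeta_{_j}$, of the sets $\mathcal{V}(l_{_1},\cdots,l_{_h})$, where $\mathcal{V}(l_{_1},\cdots,l_{_h})$ is the image under $Ad_{_Z}$ of the direct sum of $\mathfrak{so}_{(p+q)}$--$plog(N)$, of $\bigoplus\limits_{j=1}^h\mathcal{W}_{_{jl_j}}$ and of $\mathfrak{u}_k(\mathbb{H})$--$plog(R)$. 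Each $\mathcal{V}(l_{_1},\cdots,l_{_h})$ is therefore a compact submanifold of $\mathfrak{su}_n$ diffeomorphic to $\dfrac{O_{2m}}{U_m}\times\big[\prod\limits_{j=1}^h{\bf Gr}(l_{_j};\mathbb{C}^{\zeta_{_j}})\big]\times\dfrac{Sp_{\mu}}{U_{\mu}}$. Since the Grassmannians and $\dfrac{Sp_{\mu}}{U_{\mu}}$ are connected (indeed simply connected), each $\mathcal{V}(l_{_1},\cdots,l_{_h})$ has exactly as many connected components as $\dfrac{O_{2m}}{U_m}$, namely $1$ if $m=0$ and $2$ if $m\ge1$, each component being diffeomorphic to a product of complex Grassmannians, $\dfrac{Sp_{\mu}}{U_{\mu}}$ and, when $m\ge1$, $\dfrac{SO_{2m}}{U_m}$; this yields the asserted total component counts $\prod\limits_{j=1}^h(\zeta_{_j}+1)$ and $2\prod\limits_{j=1}^h(\zeta_{_j}+1)$. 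As a finite product of simply connected compact symmetric homogeneous spaces is again simply connected, compact and a symmetric homogeneous space, every component has the stated properties.

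I do not expect a serious obstacle: the result is essentially a bookkeeping assembly of three previously established special cases. The points that require care are: first, checking that the factor $W_{(p, q)}\oplus\big[\bigoplus\limits_{j=1}^h W_{(2\mu_{_j}, 2\nu_{_j})}\big]\oplus I_{_{2k}}$ built into $Z$ precisely untwists the indefinite-orthogonal and $W$-conjugated blocks of Proposition \ref{propprinc} into the plain groups $SO_{(p+q)}$, $U_{(\mu_{_j}+\nu_{_j})}$ and $U_k(\mathbb{H})$ for which Propositions \ref{so-plog}, \ref{u-n-plog} and \ref{h-F-n-plog0} are phrased; second, the (routine) blockwise splitting of generalized principal logarithms recalled above; and third, keeping track, through the decomplexification map $\rho$, of the fact that the dimension parameter of the $j$-th Grassmannian is the complex multiplicity $\zeta_{_j}$ of $-1$ as eigenvalue of $M_{_j}$, rather than its real multiplicity $2\zeta_{_j}$.
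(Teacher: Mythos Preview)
Your proposal is correct and follows essentially the same route as the paper's proof: reduce via $Ad_{_Z}$ using Lemma \ref{lemma-plog-coniug} and Proposition \ref{propprinc}, split the generalized principal logarithms blockwise (as in the proof of Theorem \ref{main-thm-par5}), and then invoke Propositions \ref{so-plog}, \ref{u-n-plog} (through Remarks \ref{confronta-Higham} (b)) and \ref{h-F-n-plog0} for the three block types. Your write-up is simply more explicit about the bookkeeping than the paper, which compresses all of this into a few lines.
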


\begin{proof}
Part (a) follows directly from Proposition \ref{propprinc}.
\ \ By Lemma \ref{lemma-plog-coniug}, we can assume 

$\preccurlyeq Q \succcurlyeq_{_{SU_n}}\ = SO_{(p+q)}\oplus \bigg[\bigoplus\limits_{j=1}^h U_{(\mu_{_j}+ \nu_{_j})} \bigg]\oplus U_k (\mathbb{H})$ \ \ and \ $M=N\oplus \bigg(\bigoplus\limits_{j=1}^h M_{_j} \bigg) \oplus R.$

\smallskip

Therefore, arguing as in the proof of Theorem \ref{main-thm-par5}, \ we get
\ \ $\preccurlyeq Q \succcurlyeq_{_{\mathfrak{s\!u}_n}}\!\!$--$plog(M) = $

\smallskip

$\bigg[\mathfrak{so}_{(p+q)}$--$plog(N)\bigg] \oplus \bigg[ \bigoplus\limits_{j=1}^h \mathfrak{u}_{(\mu_{_j}+ \nu_{_j})}$--$plog(M_{_j})
\bigg] \oplus \bigg[\mathfrak{u}_k (\mathbb{H})$--$plog(R) \bigg]$. 

\smallskip

Hence we get (b), by means of Propositions \ref{h-F-n-plog0}, \  \ref{so-plog} and \ref{u-n-plog}, via Remarks \ref{confronta-Higham} (b).
\end{proof}

\end{document}